\documentclass[11pt,a4paper]{amsart}

\usepackage{amssymb}
\usepackage{amsmath,graphics,verbatim}
\usepackage{latexsym}
\usepackage{eucal}
\usepackage{a4wide}

\newtheorem{proposition}{Proposition}[section]
\newtheorem{theorem}{Theorem}[section]
\newtheorem{lemma}[theorem]{Lemma}
\newtheorem{prop}[theorem]{Proposition}
\newtheorem{coro}[theorem]{Corollary}
\theoremstyle{remark}
\newtheorem{remark}[theorem]{Remark}

\newcommand{\mc}{\mathcal}

\newcommand{\rr}{\mathbb{R}}

\newcommand{\nn}{\mathbb{N}}
\newcommand{\cc}{\mathbb{C}}

\newcommand{\zz}{\mathbb{Z}}

\newcommand{\eps}{\epsilon}

\newcommand{\pl}{\partial}
\newcommand{\x}{\times}

\newcommand{\til}{\widetilde}
\newcommand{\bbar}{\overline}

\newcommand{\supp}{\textrm{supp}}
\newcommand{\cjd}{\rangle}
\newcommand{\cjg}{\langle}

\newcommand\RR{\mathbb{R}}
\newcommand\CC{\mathbb{C}}

\newcommand{\SC}{\ensuremath{\mathrm{sc}}}

\renewcommand\Re{\operatorname{Re}}
\renewcommand\Im{\operatorname{Im}}

\newcommand\MMb{M^2_b}
\newcommand\WF{\operatorname{WF}}

\newcommand\mf{\operatorname{mf}}

\newcommand\la{\lambda}
\newcommand\demi{\frac{1}{2}}
\newcommand\ndemi{\frac{n}{2}}
\newcommand\Id{\operatorname{Id}}
\newcommand{\zf}{\mathrm{zf}}
\newcommand{\bfo}{\mathrm{bf}_0}
\newcommand{\rbo}{\mathrm{rb}_0}
\newcommand{\lbo}{\mathrm{lb}_0}
\newcommand{\lb}{\mathrm{lb}}
\newcommand{\rb}{\mathrm{rb}}

\newcommand{\bfc}{\mathrm{bf}}
\newcommand{\sca}{\mathrm{sc}}

\newcommand{\ilabel}{\label}

\begin{document}
\title[Uniform Sobolev estimates for non-trapping metrics]{Uniform Sobolev estimates for non-trapping metrics}

\author{Colin Guillarmou}
\address{DMA, U.M.R. 8553 CNRS\\
Ecole Normale Sup\'erieure\\
45 rue d'Ulm\\ 
F 75230 Paris cedex 05 \\France}
\email{cguillar@dma.ens.fr}

\author{Andrew Hassell}
\address{Department of Mathematics, Australian National University \\ Canberra ACT 0200 \\ AUSTRALIA}
\email{Andrew.Hassell@anu.edu.au}


\begin{abstract}
We prove uniform Sobolev estimates $||u||_{L^{p'}} \leq C ||(\Delta-\alpha)u||_{L^{p}}$ for $\alpha\in\cc$ and $p=2n/(n+2), p'=2n/(n-2)$
on non-trapping asymptotically conic manifolds of dimension $n$, generalizing to non-constant coefficient Laplacians a result of 
Kenig-Ruiz-Sogge \cite{KRS}.   
\end{abstract}

\maketitle

\section{Introduction}\ilabel{sec:intro}

In this paper, we consider a class of complete non-compact Riemannian manifolds of dimension $n$, 
which generalize the Euclidean structure near infinity in a natural way. These are \emph{asymptotically conic} manifolds, defined as follows:  
$(M,g)$ is asymptotically conic if $M$ is the interior of a smooth compact manifold with boundary $\bbar{M}$, $g$ is a smooth metric on $M$ 
such that there exists a smooth boundary defining function $x$ on $\bbar{M}$ with $(M,g)$ isometric outside a compact set to 
\begin{equation}  (0,\eps)_x\x \pl\bbar{M} ,\textrm{ with metric } g=\frac{dx^2}{x^4}+\frac{h_x}{x^2}
\ilabel{metric}\end{equation}
where $h_x$ is a smooth one-parameter family of metrics on $\pl\bbar{M}$. Euclidean space $\RR^n$ fits into this framework, with
$\pl\bbar{M}=S^{n-1}$, $x=1/|z|$ where $z\in \rr^n$, and $h_x$ the standard metric on $S^{n-1}$. More generally, if $h_x$ is independent of $x$ for small $x$, the metric is of the form $dr^2 + r^2 h$ in terms of $r = 1/x$ for large $r$, hence conic near infinity. 

The Laplacian $\Delta$ associated to $g$ has only continuous spectrum given by the half-line $[0,\infty)$. Let  $dE_{\sqrt{\Delta}}(\lambda)$
be the spectral measure of $\sqrt{\Delta}$, defined by $F(\sqrt{\Delta})=\int_{0}^\infty F(\la)dE_{\sqrt{\Delta}}(\la)$ for all bounded functions
$F$. In \cite{GHS2}, the authors with Adam Sikora proved that,  when $n\geq 3$,  the spectral measure $dE_{\sqrt{\Delta}}(\lambda)$ maps $L^q(M)$ to $L^{q'}(M)$ boundedly for all $q$ in the range $[1, 2(n+1)/(n+3)]$, generalizing the Tomas-Stein restriction estimate to this class of manifolds.  
When in addition $(M,g)$ is \emph{non-trapping}, meaning that every geodesic on $M$ reaches spatial infinity in 
forward and backward time, then the following estimate was also shown in \cite{GHS2}:
\begin{equation}
\big\| dE_{\sqrt{\Delta}}(\lambda) \big\|_{L^q(M) \to L^{q'}(M)} \leq C \lambda^{n(1/q - 1/q') - 1}, \text{ for all } \lambda > 0
\ilabel{spe}\end{equation}
with a constant $C$ independent of $\lambda$. In general if $(M,g)$ is trapping, the estimate still holds for $0 < \lambda < \lambda_0$ with a constant $C$ depending on $\lambda_0$.

Resolvent estimates between $L^q$ spaces are also of interest for a number of reasons. The most classical is the Hardy-Littlewood-Sobolev inequality in $\RR^n$, a special case of which is 
$$
\| \Delta^{-1} u \|_{L^r(\RR^n)} \leq C \| u \|_{L^q(\RR^n)}, \quad 
\frac1{q} = \frac1{r} + \frac2{n}. 
$$
This was generalized in the following way by Kenig-Ruiz-Sogge \cite{KRS}: suppose that $L$ is any constant coefficient first order differential operator on $\RR^n$. Then for the `Sobolev exponents' $p = 2n/(n+2)$, $p' = 2n/(n-2)$,  there is an inequality
\begin{equation}
\big\| (\Delta + L) u \big\|_{L^{p}(\RR^n)} \geq C \| u \|_{L^{p'}(\RR^n)}, \text{ for all } u \in W^{2,p}(\RR^n) 
\ilabel{KRS}\end{equation}
with a constant $C$ independent of $L$. (In fact, they were able to obtain such a uniform estimate  even when $\Delta$ was replaced by a homogeneous second order constant coefficient differential operator, not necessarily elliptic but with non-degenerate leading symbol.) In particular, when $L$ is a constant $-\alpha \in \CC$, they showed 
\begin{equation}
\big\| (\Delta - \alpha) u \big\|_{L^{p}(\RR^n)} \geq C \| u \|_{L^{p'}(\RR^n)}, \text{ for all } u \in W^{2,p}(M). 
\ilabel{KRS2}\end{equation}
Inequality \eqref{KRS} was then used to deduce Carleman inequalities, and then unique continuation theorems, on $\RR^n$.

Given \eqref{spe} and \eqref{KRS2}, and noting that $p = 2n/(n+2)$ is within the range of validity of \eqref{spe},  it is natural to predict that similar uniform estimates hold on asymptotically conic non-trapping manifolds. Indeed, this is the case. Our main result is 

\begin{theorem}\ilabel{mainth} 
Let $(M,g)$ be an asymptotically conic non-trapping manifold of 
dimension $n \geq 3$. Let $p = 2n/(n+2)$ and $p'=2n/(n-2)$. Then there is a constant $C>0$ such that for all $\alpha \in \CC$, we have 
\begin{equation}
 \| u \|_{L^{p'}(M)} \leq C \big\| (\Delta_g - \alpha) u \big\|_{L^p(M)}  , \text{ for all } u \in W^{2,p}(M). 
\ilabel{uniform-delta-ineq}\end{equation}
Equivalently, for all $f\in L^p(M)$ and  all $\alpha \in \CC$, 
\begin{equation}
\big\| (\Delta_g - \alpha)^{-1} f \big\|_{L^{p'}(M)} \leq C \| f \|_{L^p(M)}.
\ilabel{uniform-resolvent-ineq}\end{equation}
When $\alpha > 0$, the operator in \eqref{uniform-resolvent-ineq}  may be taken to be either the  incoming or outgoing resolvents, $(\Delta - (\alpha \pm i0))^{-1}$. If the metric is trapping, the same estimate holds true for all $\alpha$ such that $\Re \alpha\leq A$ for  any $A>0$, 
with $C$ depending only on $A$.
\end{theorem}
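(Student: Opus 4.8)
The plan is to reduce the estimate \eqref{uniform-delta-ineq} to the spectral measure bound \eqref{spe} via a suitable representation of the resolvent in terms of the spectral measure, combined with interpolation and a separate analysis of the large-$|\alpha|$ and the non-real-$\alpha$ regimes. The key observation is that $(\Delta-\alpha)^{-1}$ can be written, for $\alpha$ away from the positive real axis, as an integral
\begin{equation*}
(\Delta-\alpha)^{-1} = \int_0^\infty \frac{1}{\lambda^2-\alpha}\, dE_{\sqrt{\Delta}}(\lambda),
\end{equation*}
so that mapping properties of $dE_{\sqrt{\Delta}}(\lambda)$ from $L^p$ to $L^{p'}$, uniform in $\lambda$ after the rescaling in \eqref{spe}, should give the result once the singularity of $1/(\lambda^2-\alpha)$ near $\lambda^2=\alpha$ is handled. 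For $\alpha=\beta\pm i0$ with $\beta>0$ this requires the limiting absorption principle and a careful treatment of the boundary value, which is exactly where the non-trapping hypothesis enters: one needs the Schwartz kernel of $dE_{\sqrt{\Delta}}(\lambda)$ to have good pointwise/oscillatory structure so that, after dividing by $\lambda^2-\beta$ and integrating in $\lambda$, no logarithmic or worse divergence appears in the $L^p\to L^{p'}$ norm.

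The first concrete step is to reduce to the case $\alpha$ in a bounded set, or more precisely to separate three regimes: (i) $\alpha$ in a fixed compact subset of $\CC \setminus [0,\infty)$, including a neighborhood of $0$; (ii) $|\alpha|$ large with $\alpha$ not close to $\RR_+$; and (iii) $\alpha = \beta \pm i0$, $\beta > 0$, i.e.\ the spectral parameter on the positive axis. By the scaling behavior of the metric near infinity — conic metrics are not scale invariant globally, so a genuine rescaling argument is unavailable — one cannot simply rescale $\alpha$ to $1$; instead one works directly with the resolvent kernel. For regime (ii), when $\alpha \notin \RR_+$ is large, the relevant estimate should follow by writing $(\Delta - \alpha)^{-1}$ using \eqref{spe}: split the $\lambda$-integral into $\lambda^2 \le |\alpha|/2$, $\lambda^2 \in (|\alpha|/2, 2|\alpha|)$, and $\lambda^2 \ge 2|\alpha|$, bound each piece using \eqref{spe} (valid for all $\lambda>0$ under non-trapping, or for $\lambda$ bounded in the trapping case), and sum; the middle range is controlled because $|\lambda^2 - \alpha| \gtrsim |\operatorname{Im}\alpha|$ or, if $\operatorname{Im}\alpha$ is small, using the Stein–Tomas-type improvement that $dE_{\sqrt\Delta}$ maps $L^p \to L^{p'}$ with a $\lambda$-power that makes the integral of $1/|\lambda^2-\alpha|$ over the resonant window convergent (this is why $p = 2n/(n+2)$ lies strictly inside $[1, 2(n+1)/(n+3)]$ for $n\ge 3$... actually one checks $p$ is in the allowed range, giving room for the logarithmic loss near resonance). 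For regime (i) the resolvent is holomorphic and bounded; one uses the low-energy behavior of $dE_{\sqrt\Delta}(\lambda)$ as $\lambda \to 0$, which on asymptotically conic manifolds is governed by the $b$-calculus / zero-energy structure, and the integral $\int_0^1 |\lambda^2 - \alpha|^{-1}\, \|dE_{\sqrt\Delta}(\lambda)\|_{L^p\to L^{p'}}\, d\lambda$ converges since $\|dE_{\sqrt\Delta}(\lambda)\|_{L^p\to L^{p'}} \lesssim \lambda^{n(1/p-1/p')-1} = \lambda$ is integrable near $0$.

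The main obstacle is regime (iii), $\alpha = \beta \pm i0$ with $\beta > 0$, which amounts to proving the \emph{uniform} (in $\beta$) $L^p \to L^{p'}$ bound for the limiting resolvent $(\Delta - (\beta \pm i0))^{-1}$. Here the naive estimate $\int_0^\infty |\lambda^2 - \beta \mp i0|^{-1} \lambda\, d\lambda$ diverges logarithmically at $\lambda^2 = \beta$ and linearly at $\lambda = \infty$, so one cannot bound the resolvent by the triangle inequality applied to $\|dE_{\sqrt\Delta}(\lambda)\|_{L^p\to L^{p'}}$ alone. The resolution, following the strategy that makes the Euclidean Kenig–Ruiz–Sogge estimate work, is to exploit oscillation: near $\lambda^2 = \beta$ one must use not just the operator norm of the spectral measure but the detailed oscillatory structure of its Schwartz kernel (a Legendrian/Lagrangian distribution associated with the sojourn-time relation of the non-trapping flow, as analyzed in \cite{GHS2}), so that the principal-value integral $\int (\lambda^2 - \beta \mp i0)^{-1}\, dE_{\sqrt\Delta}(\lambda)\, d\lambda$ converges as an operator $L^p \to L^{p'}$ with norm uniform in $\beta$. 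Concretely I would: decompose $dE_{\sqrt\Delta}(\lambda)$ microlocally near the diagonal and away from it; for the diagonal part use the rescaled local parametrix which looks like a Euclidean pseudodifferential operator of order $-2$ and borrow the Euclidean argument; for the off-diagonal part, integrate by parts in $\lambda$ using the oscillation $e^{i\lambda(\text{sojourn time})}$ to gain decay in the sojourn-time variable, trading the $1/(\lambda^2-\beta)$ singularity against derivatives of the amplitude, then estimate the resulting kernel's $L^p \to L^{p'}$ norm by the Hardy–Littlewood–Sobolev inequality together with the fact that the kernel decays like $d(z,z')^{-(n-2)}$ uniformly. Rescaling in $\beta$ (setting $\lambda = \sqrt{\beta}\,\sigma$ and using that the high-energy resolvent kernel has a semiclassical conormal/Legendrian description uniform down to $\beta$ bounded) should then yield the $\beta$-independence. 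The interplay between the logarithmic divergence at the energy surface and the exact Sobolev exponents is the delicate point, just as in \cite{KRS}.
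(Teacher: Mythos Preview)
Your overall strategy matches the paper's in broad outline, but there are two concrete gaps.

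First, the naive spectral integration $\int_0^\infty |\lambda^2-\alpha|^{-1}\,\|dE_{\sqrt\Delta}(\lambda)\|_{L^p\to L^{p'}}\,d\lambda$ diverges at $\lambda=\infty$ for the Sobolev exponent $p=2n/(n+2)$: here $\|dE_{\sqrt\Delta}(\lambda)\|_{L^p\to L^{p'}}\lesssim\lambda$ by \eqref{spe}, so the integrand behaves like $\lambda^{-1}$ at infinity. This affects your regimes (i)--(ii) just as much as (iii). The paper avoids this by first spectrally localizing via $\phi(\Delta/|\alpha|)$ to a window $\lambda^2\sim|\alpha|$, and handling the complementary piece $(1-\phi(\Delta/|\alpha|))(\Delta-\alpha)^{-1}$ by sandwiching through $L^2$: one uses $(\Delta+\beta)^{-1/2}:L^p\to L^2$ (equivalent to the Sobolev inequality \eqref{sobolev}) and bounds $\|(1-\phi)(\Delta+\beta)(\Delta-\alpha)^{-1}\|_{L^2\to L^2}$ by elementary functional calculus. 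This is Proposition~\ref{outsidecone}; no integration of the spectral measure over an infinite $\lambda$-range is ever performed.

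Second, and more seriously, your treatment of regime (iii) underestimates the difficulty. The pointwise/oscillatory structure you want on $\partial_\lambda^j dE_{\sqrt\Delta}(\lambda)(z,z')$ --- the bound \eqref{assumption} --- does \emph{not} hold globally on asymptotically conic non-trapping manifolds, because of conjugate points for the geodesic flow (see the discussion around \eqref{kernelest}). Your integration-by-parts-in-$\lambda$ argument and the claimed uniform $d(z,z')^{-(n-2)}$ decay would need exactly such global estimates; in fact the on-spectrum resolvent kernel decays only like $d(z,z')^{-(n-1)/2}$ at large distances, which is far too slow for Hardy--Littlewood--Sobolev. The paper resolves this via a microlocal (phase-space, not position-space) partition of unity $\Id=\sum_iQ_i(\alpha)$. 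For ``near-diagonal'' pairs $(i,j)$ the estimates \eqref{assumption} \emph{do} hold for $Q_i\,dE_{\sqrt\Delta}(\lambda)\,Q_j$, and then complex interpolation with an analytic family (Lemma~\ref{uniformonspect}) gives the bound --- this is the rigorous version of your integration-by-parts idea. For ``off-diagonal'' pairs a different mechanism is used: the partition is engineered so that $Q_i$ is either not outgoing-related or not incoming-related to $Q_j$; then the Legendre structure of the resolvent (Proposition~\ref{msprop}) makes $Q_i(h^2\Delta-\beta\mp i0)^{-1}Q_j$ microlocally \emph{trivial}, hence $O(h^\infty)$ in any $L^q\to L^{q'}$ norm. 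Since the \emph{difference} of incoming and outgoing resolvents is already uniformly bounded via \eqref{spe} (Corollary~\ref{cor:diff}), this yields both. Your off-diagonal sketch supplies no substitute for this step.
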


\begin{remark}
In fact we prove a more general result in the non-trapping case (see remarks \ref{2(n+1)/(n+3)}, \ref{endpointhigh} and \ref{endpointlow}): for all $q\in [\frac{2n}{n+2},\frac{2(n+1)}{n+3}]$, there is $C>0$ such that for all 
$\alpha\in\cc$  and all $f\in L^q(M)$
\begin{equation}
\big\| (\Delta_g - \alpha)^{-1} f \big\|_{L^{q'}(M)} \leq C|\alpha|^{n(\frac{1}{q}-\demi)-1} \| f \|_{L^q(M)}.
\ilabel{L^qL^q'}\end{equation}
If $\eta>0$ is small, the same estimate holds true for all $q\in [\frac{2n}{n+2},2]$ and $|\arg(\alpha)|>\eta>0$, with $C$ depending only on $\eta$: see \eqref{interp}.
\end{remark}
\begin{remark} 
It is not completely obvious that \eqref{uniform-delta-ineq} and \eqref{uniform-resolvent-ineq} are equivalent. To see that they are, 
first consider the case $\Im \alpha \neq 0$ or $\Re \alpha < 0$. Then $(\Delta - \alpha)^{-1}$ is a pseudodifferential operator of order $-2$ 
(more precisely in the scattering calculus \cite{Mel}). Therefore $\nabla \nabla (\Delta - \alpha)^{-1}$ is a pseudodifferential operator of order $0$, and hence bounded on $L^p(M)$, $1 < p < \infty$. Hence $(\Delta - \alpha)^{-1}$ is an isomorphism between $L^p(M)$ and $W^{2,p}(M)$, and using this we easily see that \eqref{uniform-delta-ineq} and \eqref{uniform-resolvent-ineq} are equivalent and $W^{2,p}(M)\subset L^{p'}(M)$.
In the case that $\alpha > 0$, we show that \eqref{uniform-resolvent-ineq} implies \eqref{uniform-delta-ineq}. Let $u \in C_c^\infty(M) \subset W^{2,p}(M)$ and $f=(\Delta-\alpha)u$. Then, by \eqref{uniform-resolvent-ineq}, Êone has for all $\alpha>0,\epsilon > 0$, 
\begin{equation}
\big\| (\Delta - (\alpha + i\epsilon))^{-1} f \big\|_{L^{p'}} \leq C \| f \|_{L^p}.
\ilabel{distlimit}\end{equation}
Therefore there is a sequence of $\epsilon$ tending to zero such that $(\Delta - (\alpha + i\epsilon))^{-1} f $ has a weak limit. By definition,  $(\Delta - (\alpha + i0))^{-1} f $ is the distributional  limit of $(\Delta - (\alpha + i\epsilon))^{-1} f $ as $\epsilon $ tends to zero. 
But $(\Delta - (\alpha + i\epsilon))^{-1} f=u+i\eps(\Delta - (\alpha + i\epsilon))^{-1}u$ and $u \in x^{1/2 + \epsilon} L^2(M)$, hence by the limiting absorption principle \cite{Mel}, $\lim_{\epsilon \to 0}(\Delta - (\alpha + i\epsilon))^{-1}u$ exists and thus $(\Delta - (\alpha + i0))^{-1} f=u$. 
By $\eqref{distlimit}$, this limit lies in $L^{p'}$ and satisfies 
$$
\| u \|_{L^{p'}(M)} \leq C \big\| (\Delta_g - \alpha) u \big\|_{L^p(M)} .
$$
A density argument then shows this for all $u \in W^{2,p}(M)$. 
The converse implication is obtained through similar reasoning. 
\end{remark}

\begin{remark} The example in Remark 8.8 of \cite{GHS2} (the connected sum of $\RR^n$ and a round sphere) shows that, in the absence of any nontrapping assumption, the $L^p \to L^{p'}$ norm of the resolvent $(\Delta - \alpha \pm i0)^{-1}$ may grow exponentially as $\alpha \to \infty$.
\end{remark}

\subsection{Outline of the proof and previous results}

We prove Theorem~\ref{mainth} in two steps, first in the region $\{{\rm Re}(\alpha)\leq 0\textrm{ or } |\arg \alpha| < \eta\}$ 
for any $\eta>0$ and then in $\{|\arg \alpha| < \eta\}$. 

The estimate in the first region is a consequence of ellipticity 
and the Sobolev estimate $||\Delta u||_{L^p}\geq C||u||_{L^{p'}}$, which follows 
for instance from Gaussian estimates on the heat kernel $e^{-t\Delta}(z,z')$ and \eqref{spe} for $\la$ near $0$ (an alternative approach
is to use the inverse $\Delta^{-1}$ of $\Delta$ constructed in \cite{GH} --- see Remark~\ref{rem:Green}). We do this in Sections~\ref{sec:leq0} and \ref{sec:outside-sector}. 
We also show that \eqref{spe} implies a uniform estimate on the difference $(\Delta - \alpha)^{-1} - (\Delta - \overline\alpha)^{-1}$ between the resolvent and its formal adjoint, for all $\alpha \in \CC \setminus \RR$.

To prove the estimate within the sector $|\arg \alpha| < \eta$, i.e.\  close to the spectrum, we need more detailed properties of the spectral measure. Using complex interpolation, we show that if we had pointwise estimates of the form 
$$
\Big| \pl^\ell_{\la}dE_{\sqrt{\Delta}}(\la) (z,z') \Big|\leq C\la^{n-1-\ell}(1+\la d(z,z'))^{-\frac{n-1}{2}+\ell}, \quad \forall\, \ell\leq n/2
$$
where $d(z,z')$ is the Riemannian distance between $z, z' \in M$, then \eqref{uniform-resolvent-ineq} would be a consequence.
However, these estimates, which hold for the Euclidean Laplacian, do not hold in general for asymptotically conic 
non-trapping manifolds, essentially because there can be conjugate points for the geodesic flow. 
As in \cite{GHS2}, the way we bypass this problem is through a microlocal partition of the identity, 
$\Id = \sum_{i=1}^N Q_i(\alpha )$, where $Q_i(\alpha)$ are pseudodifferential operators depending on $\alpha$. If $i,j$ are such that 
the microsupports (which will be also called wavefront sets) of $Q_i(\alpha)$ and $Q_j(\alpha)$ are chosen appropriately, 
we have from  \cite{GHS2}  that there is $C>0$ such that for $\lambda/\alpha\in[1-\delta,1+\delta]$
\begin{equation}\ilabel{kernelest}
\big|Q_i(\alpha)\pl^\ell_{\la}dE_{\sqrt{\Delta}}(\la)Q_j(\alpha) (z,z')|\leq C\la^{n-1-\ell}(1+\la d(z,z'))^{-\frac{n-1}{2}+\ell}.
\end{equation}
The condition on $i,j$ is essentially that the microsupports of $Q_i$ and $Q_j$ are sufficiently close in phase space (in a sense that will be explained later). This allows us to prove `near-diagonal' estimates on the resolvent, that is, for $Q_i(\alpha) (\Delta - (\lambda \pm i\gamma)^2)^{-1} Q_j(\alpha)$ when $i,j$ are as above. 

It remains to discuss the `off-diagonal' estimates, that is, for $Q_i(\alpha) (\Delta - (\lambda \pm i\gamma)^2)^{-1} Q_j(\alpha)$ when $Q_i(\alpha),Q_j(\alpha)$ have  separated microsupports. 
Our proof of these estimates uses properties of the resolvent  proved in \cite{HW, GHS1}, namely that it is the sum of a pseudodifferential operator and  a `Legendre distribution'. The Legendre part is oscillatory at the boundary and can be understood as a sort of Fourier Integral operator,  and has a well-defined microlocal support, analogous to the canonical relation of an FIO; for the outgoing/incoming resolvent, this microlocal support is essentially the forward/backward geodesic flow relation on the cotangent bundle of $M \times M$. Because of this oscillatory structure, we can understand the composition $Q_i (\Delta - (\alpha \pm i0))^{-1} Q_j$ microlocally --- see 
 Proposition~\ref{WFprop} (which is taken from \cite[Section 7]{GHS2}).  
 
Our strategy is then to choose the partition $Q_i$ so that, either the microsupports of $Q_i$ and $Q_j$ are very close, in which case we are effectively in the `on-diagonal' case, where \eqref{kernelest} holds, or $Q_i$ has the property of either being `not outgoing-related' or `not incoming-related' to $Q_j$ in the sense explained in section \ref{psido}. 
 Heuristically  `not outgoing-related' means that there is no point in the microsupport of $Q_i$ that is obtained from the microsupport of $Q_j$ by forward geodesic flow. We give a construction of such a partition in Section~\ref{micpart}.
 We show from this that the \emph{outgoing} resolvent, $R(\lambda + i \gamma)$ for $\gamma > 0$, is essentially trivial when sandwiched between $Q_i$ and $Q_j$ if $Q_i$ is not outgoing-related to $Q_j$. 
 If $Q_i$ is not incoming-related to $Q_j$ then the same is true for the \emph{incoming} resolvent (that is, the case $\gamma < 0$). 
Thus, for each pair $(Q_i, Q_j)$, we get off-diagonal estimates for at least one of the incoming or outgoing resolvents. Since we already observed that we have an $L^p \to L^{p'}$ estimate for the \emph{difference} of the resolvents, this completes the proof.\\

In Appendix \ref{sec:alternative}, we also give a second, alternative proof of the off-diagonal resolvent estimates Êbased on positive commutator,  which does not rely on the Legendre structure of the resolvent. 
Since we think this could useful in other situations, the positive commutator proof 
provides some statement similar to propagation of singularities (ie. semiclassical wave-front set, including wave-front set `at infinity') for the incoming and outgoing resolvents, and we wrote it without assuming non-trapping geometry but rather 
certain assumptions like polynomial resolvent estimates on the spectrum, see Lemma \ref{lem:poscomm2}.\\

The first uniform Sobolev estimate of this type \eqref{uniform-delta-ineq}Ê appeared in the work of Kenig-Ruiz-Sogge \cite{KRS} for homogeneous second order operators on $\rr^n$, and it was  used to prove $L^p$ Carleman estimates and unique continuation. Shen \cite{Sh} proved that for the torus $\mathbb{T}^n=\rr^n/\zz^n$, 
\begin{equation}\label{dksest} 
||(\Delta-\alpha)^{-1}||_{L^p\to L^{p'}}\leq C \quad \text{ in the region } \, ({\rm Im}\, \alpha)^2>\delta \, {\rm Re}(\alpha), \quad |\alpha|>\delta,
\end{equation}
where $C$ depends only on $\delta>0$. This estimate was generalized by Dos Santos Ferreira-Kenig-Salo \cite{DKS} to compact manifolds
of dimension $n\geq 3$ and very recently, Bourgain-Shao-Sogge-Yao \cite{BSSY} proved that this estimate is sharp in general on compact manifolds since for the sphere $S^n$ (or Zoll manifolds), the regions where the estimate \eqref{dksest} can not be made smaller. They also show in this paper some equivalence between $L^p\to L^{p'}$ norms  of spectral projectors of $\Delta$ 
in frquency windows $[\la-\eps(\la),\la+\eps(\la)]$ with $\eps(\la)\to 0$ and $L^p\to L^{p'}$ estimates for $(\Delta-\la+i\gamma)^{-1}$ 
in $|\gamma|\geq \eps(\la)$ and $\la>1$.  

To compare these compact results with our case, we point out two main differences:  first the non-trapping condition which allows us to get  
estimates down to the spectrum, like in $\rr^n$, but brings technical difficulties coming from the complicated structure
of the resolvent near spatial infinity on asymptotically conic manifolds.
The second difference with the compact setting is that we deal with the behaviour 
at small frequencies, where we need to use the result of \cite{GHS1} microlocally analyzing 
the resolvent and spectral measure on the spectrum on the frequency window $[0,\lambda_0]$.\\

 \textbf{Acknowledgements.} We thank Adam Sikora and Jean-Marc Bouclet for useful discussions.
  C.G. thanks the Mathematics Department at ANU where part of this work was done,
and the PICS-CNRS Progress in Geometric Analysis and Applications between ANU and CNRS. C.G. is partially supported by 
 grants ANR-09-JCJC-0099-01 and  ANR-10-BLAN 0105. 
 A.H. acknowledges the support of the Australian Research Council through a Future Fellowship FT0990895 and  Discovery Grant DP1095448. 


\section{Estimates when $\Re \alpha \leq 0$}\ilabel{sec:leq0}

We first consider the case $\alpha = 0$; the case $\Re \alpha\leq 0$ will then follow easily. 

\subsection{Sobolev estimate for $\Delta$}

We first recall the restriction estimate of \cite{GHS2}: let $(M,g)$ be an asymptotically conic manifold in dimension $n\geq 3$, let 
 $1\leq q\leq 2\frac{n+1}{n+3}$ and $q'$ its conjugate exponent, then for all $\la_0>0$, there exists $C>0$ such that for all $\la_0>\la>0$
\begin{equation}\ilabel{restrictionest}
||dE_{\sqrt{\Delta}}(\la)||_{L^q\to L^{q'}}\leq C\la^{n(\frac{1}{q}-\frac{1}{q'})-1}. 
\end{equation}

\begin{prop} Let $(M,g)$ be an asymptotically conic manifold of dimension $n\geq 3$.
We then have the classical Sobolev inequality: there exists $C>0$ such that for all $u \in C_c^\infty(M)$
\begin{equation}\ilabel{sobolev}
\cjg \Delta u,u\cjd \geq C||u||^2_{L^{p'}}, \quad p' = \frac{2n}{n-2}. 
\end{equation}
\end{prop}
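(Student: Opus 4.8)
The plan is to reduce \eqref{sobolev} to an $L^p\to L^{p'}$ mapping property of the inverse Laplacian and then invoke classical real-variable arguments. For $u\in C_c^\infty(M)$ write $\cjg\Delta u,u\cjd=\|\nabla u\|_{L^2}^2=\|\Delta^{1/2}u\|_{L^2}^2$, and set $v=\Delta^{1/2}u$; letting $v$ range over a dense subset of $L^2(M)$, the inequality \eqref{sobolev} is equivalent to the boundedness $\Delta^{-1/2}\colon L^2(M)\to L^{p'}(M)$. Since $\Delta^{-1/2}$ is self-adjoint and $p=(p')'$, the $TT^*$ argument identifies $\|\Delta^{-1/2}\|_{L^2\to L^{p'}}^2$ with $\|\Delta^{-1}\|_{L^p\to L^{p'}}$, where $\Delta^{-1}=\Delta^{-1/2}\circ\Delta^{-1/2}$; note the relevant exponents are critical, $1/p-1/p'=2/n$. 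Thus it suffices to prove that $\Delta^{-1}$ maps $L^p(M)$ to $L^{p'}(M)$ boundedly.

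For this I would use one of two routes. First, the Green's function route (see Remark~\ref{rem:Green}): the low-energy resolvent construction of \cite{GH} shows that $\Delta^{-1}$ has a Schwartz kernel $G(z,z')$ obeying the Euclidean-type bound $|G(z,z')|\le C\,d(z,z')^{2-n}$ uniformly on $M\times M$, with extra decay as $z$ or $z'$ tends to spatial infinity. Since an asymptotically conic manifold has uniformly Euclidean volume growth, $\Vol_g(B(z,r))\simeq r^n$ for all $z\in M$ and all $r>0$, the operator with kernel bounded by $d(z,z')^{2-n}$ is dominated by the Riesz potential on a space of homogeneous type, and the Hardy--Littlewood--Sobolev inequality---proved e.g.\ by a dyadic decomposition of the kernel and Marcinkiewicz interpolation between weak-type and $L^\infty$ bounds---gives $\|\Delta^{-1}\|_{L^p\to L^{p'}}<\infty$. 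Alternatively, the heat-kernel route: asymptotically conic manifolds satisfy the Gaussian upper bound $|e^{-t\Delta}(z,z')|\le C t^{-n/2}e^{-d(z,z')^2/(Ct)}$ for all $t>0$---for small $t$ from a standard parametrix and finite propagation speed, and for large $t$ from \eqref{restrictionest}, which controls the low-frequency part $\int_0^{\lambda_0}e^{-t\lambda^2}\,dE_{\sqrt{\Delta}}(\lambda)$ of $e^{-t\Delta}$. In particular $\|e^{-t\Delta}\|_{L^1\to L^\infty}\le C t^{-n/2}$ for all $t>0$, and the classical equivalence (Varopoulos; Nash; Carlen--Kusuoka--Stroock), valid for $n>2$, between this ultracontractivity bound and the Sobolev inequality $\|f\|_{L^{2n/(n-2)}}^2\le C\cjg\Delta f,f\cjd$ yields \eqref{sobolev} directly.

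The main obstacle is establishing the sharp global bound underpinning either route---the Euclidean-type decay $d(z,z')^{2-n}$ of the zero-energy Green's kernel, equivalently the global Gaussian heat kernel bound with the sharp power $t^{-n/2}$---since this is precisely where the asymptotically conic structure enters, through the zero-energy resolvent analysis of \cite{GH, GHS1} near spatial infinity together with the uniform Euclidean volume growth; conjugate points of the geodesic flow, which obstruct pointwise spectral measure bounds elsewhere in the paper, play no role here. Once this input is available the passage to \eqref{sobolev} is soft, the only point needing mild care being that the critical pair $1/p-1/p'=2/n$ cannot be reached by naively integrating the semigroup bounds $\|e^{-t\Delta}\|_{L^p\to L^{p'}}\le Ct^{-1}$ over $t\in(0,\infty)$, so one must pass through the Hardy--Littlewood--Sobolev machinery or the Nash-inequality equivalence.
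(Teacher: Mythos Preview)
Your proposal is correct and your two routes are exactly the two the paper offers: the heat-kernel/Varopoulos route is the paper's actual proof, and the Green's function route is precisely the alternative recorded in Remark~\ref{rem:Green}.

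A few small remarks. First, the $TT^*$ reduction at the start is unnecessary for the heat-kernel route: Varopoulos's theorem converts the ultracontractivity bound $\|e^{-t\Delta}\|_{L^1\to L^\infty}\le Ct^{-n/2}$ directly into \eqref{sobolev}, so you need not pass through $\Delta^{-1}\colon L^p\to L^{p'}$. Second, for small $t$ the paper simply invokes Cheng--Li--Yau on complete manifolds rather than a parametrix plus ``finite propagation speed'' (the latter phrase is a bit misplaced for the heat equation). Third, for large $t$ the paper does exactly what you sketch for the low-frequency piece---integrate \eqref{restrictionest} at $q=1$ against $e^{-t\lambda^2}$---and handles the remaining piece $e^{-t\Delta}1_{[1,\infty)}(\Delta)$ by writing $e^{-t\Delta}=e^{-\eps\Delta}e^{-(t-2\eps)\Delta}e^{-\eps\Delta}$ and using the small-time bound for $L^1\to L^2$ and $L^2\to L^\infty$ together with the spectral gap on $[1,\infty)$ to get exponential decay. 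You do not need the full off-diagonal Gaussian bound, only the on-diagonal power $t^{-n/2}$.
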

\begin{proof} 
Varopoulos \cite{Var} proves that on any Riemannian manifold $(M,g)$, a bound of the form 
$||e^{-t\Delta}||_{L^1\to L^\infty}\leq Ct^{-n/2}$ for all $t>0$ implies  the Sobolev estimate \eqref{sobolev}.
This heat operator estimate is proved for $t\in(0,1)$ on any complete Riemannian  manifold by Cheng-Li-Yau \cite{ChLiYa}, it remains to prove it for large time. First, we write
\[e^{-t\Delta}1_{[0,1]}(\Delta)= \int_{0}^1e^{-t\lambda^2} dE_{\sqrt{\Delta}}(\la)d\la\]
and use \eqref{restrictionest} with $p=1$ to get directly
\[||e^{-t\Delta}1_{[0,1]}(\Delta)||_{L^1\to L^\infty}\leq Ct^{-\ndemi}.\]
By the estimate of \cite{ChLiYa}, we know that for any $0<\eps<1/4$, $e^{-\eps \Delta}$ is bounded as a map $L^1(M)\to L^2(M)$ and as a map $L^2\to L^\infty(M)$, thus we directly obtain for $t\geq 1$
\[  ||e^{-t\Delta}1_{[1,\infty)}(\Delta)||_{L^1\to L^\infty}\leq C||e^{-(t-2\eps)\Delta}1_{[1,\infty)}(\Delta)||_{L^2\to L^2}\leq Ce^{-t/2}.\]
\end{proof}

\subsection{Sobolev estimate for $\Re \alpha\leq 0$}

It is now easy to show
\begin{proposition} 
Suppose that $\Re \alpha \leq 0$. Then \eqref{uniform-delta-ineq} holds. 
\end{proposition}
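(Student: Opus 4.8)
The plan is to reduce the case $\Re\alpha\le 0$ to the already-established Sobolev estimate \eqref{sobolev} for $\Delta$ itself. The main point is that for $\Re\alpha\le 0$ the operator $\Delta-\alpha$ is, if anything, ``more positive'' than $\Delta$, so one should be able to absorb the extra term.

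First I would argue by duality. Since $p=2n/(n+2)$ and $p'=2n/(n-2)$ are conjugate exponents and $C_c^\infty(M)$ is dense in $W^{2,p}(M)$, it suffices to show that for $u\in C_c^\infty(M)$,
\begin{equation}
\| u \|_{L^{p'}} \leq C \sup_{0 \neq v \in C_c^\infty(M)} \frac{|\langle (\Delta-\alpha) u, v\rangle|}{\|v\|_{L^{p}}},
\end{equation}
and then I would pass to the limit using the remark's observation that $(\Delta-\alpha)^{-1}$ is an isomorphism $L^p(M)\to W^{2,p}(M)$ when $\Re\alpha<0$ (and a small perturbation/limiting argument for $\Re\alpha=0$). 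Actually the cleanest route: test $(\Delta-\alpha)u$ against $u$ itself is not enough to get the $L^{p'}$ norm, so instead I would use the following trick. Write $f=(\Delta-\alpha)u$ and estimate $\|u\|_{L^{p'}}$ by interpolating/combining the bound for $\alpha=0$ with a positivity estimate. Concretely, since $\Re\alpha\le 0$, write $\alpha = -a + ib$ with $a\ge 0$, $b\in\RR$, so $\Delta-\alpha = \Delta + a - ib$. Then $\langle (\Delta+a)u,u\rangle = \|\nabla u\|_{L^2}^2 + a\|u\|_{L^2}^2 \ge \langle \Delta u, u\rangle$, which gives control of $\langle\Delta u,u\rangle$ in terms of $\Re\langle (\Delta-\alpha)u, u\rangle$.

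The key steps, in order: (1) Reduce to $u\in C_c^\infty(M)$ by density. (2) For such $u$, set $f = (\Delta-\alpha)u$ and note $\Re\langle f,u\rangle = \langle \nabla u,\nabla u\rangle + (\Re(-\alpha))\|u\|_{L^2}^2 \ge \langle \Delta u, u\rangle = \|\nabla u\|_{L^2}^2$ since $\Re(-\alpha)=-\Re\alpha\ge 0$. (3) Apply \eqref{sobolev} to get $\langle\Delta u,u\rangle \ge C\|u\|_{L^{p'}}^2$, hence $C\|u\|_{L^{p'}}^2 \le \Re\langle f,u\rangle \le \|f\|_{L^p}\|u\|_{L^{p'}}$ by H\"older, since $p,p'$ are conjugate. (4) Divide by $\|u\|_{L^{p'}}$ (which may be assumed nonzero) to obtain $\|u\|_{L^{p'}} \le C^{-1}\|f\|_{L^p} = C^{-1}\|(\Delta-\alpha)u\|_{L^p}$. (5) Invoke the density of $C_c^\infty(M)$ in $W^{2,p}(M)$ together with the fact, noted in the second remark, that $(\Delta-\alpha)^{-1}$ maps $L^p(M)$ isomorphically onto $W^{2,p}(M)$ for $\Re\alpha<0$ (the boundary case $\Re\alpha=0$, $\alpha\ne0$ follows by the same scattering-calculus reasoning, and $\alpha=0$ is \eqref{sobolev}), to extend the inequality to all $u\in W^{2,p}(M)$.

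The main obstacle, such as it is, is step (3) combined with step (2): one needs $\langle\Delta u,u\rangle$ to appear as the lower bound in \eqref{sobolev}, and one needs the elementary but slightly delicate observation that the imaginary part $b$ of $\alpha$ does no harm because we only take the real part of $\langle f,u\rangle$, while the real part $-\Re\alpha\ge 0$ only helps. There is also a minor technical point in making the density argument rigorous at $\Re\alpha=0$, where one should either perturb $\alpha$ slightly into $\Re\alpha<0$ and pass to the limit, or directly use that $(\Delta-\alpha)^{-1}$ is still a scattering pseudodifferential operator of order $-2$ for $\alpha\in i\RR\setminus\{0\}$; either way this is routine given the remarks already in the introduction.
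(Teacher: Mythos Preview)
Your proposal is correct and, once you strip away the initial false start about duality, steps (2)--(4) are exactly the paper's argument: the paper writes the single chain
\[
\|(\Delta-\alpha)u\|_{L^{p}} \geq \frac{|\langle(\Delta-\alpha)u,u\rangle|}{\|u\|_{L^{p'}}}\geq \frac{\langle \Delta u,u\rangle}{\|u\|_{L^{p'}}}\geq C\|u\|_{L^{p'}},
\]
using H\"older, the observation that $|\langle(\Delta-\alpha)u,u\rangle|\ge\Re\langle(\Delta-\alpha)u,u\rangle\ge\langle\Delta u,u\rangle$ when $\Re\alpha\le0$, and \eqref{sobolev}. Your step (5) on density is more careful than the paper, which simply stops after the $C_c^\infty$ case.
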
\label{sobolevRe<0}
\begin{proof}
Let $\alpha$ satisfy $\Re \alpha \leq 0$ and let $u\in C_0^\infty(M)$ be not identically zero. Then by \eqref{sobolev},Ê
there exists $C>0$ independent of $u$ such that 
\begin{equation}\ilabel{sobolev2} 
||(\Delta-\alpha)u||_{L^{p}(M)} \geq \frac{\big| \cjg(\Delta-\alpha)u,u\cjd \big|}{||u||_{L^{p'}(M)}}\geq \frac{\cjg \Delta u,u\cjd}{||u||_{L^{p'}(M)}}\geq  C||u||_{L^{p'}(M)} .
\end{equation}
which achieves the proof.
\end{proof}

\begin{remark}\ilabel{rem:Green} We could alternatively use results of \cite{GH} at zero energy, which shows that the Green function $\Delta_g^{-1}(z,z')$ is bounded by a constant times $d(z,z')^{-(n-2)}$. We can then use an abstract Hardy-Littlewood-Sobolev inequality from \cite{GG}, valid on metric measure spaces such that the measure of a ball of radius $\rho$ is comparable to $\rho^n$, that states  that the kernel $d(z,z')^{-(n-2)}$ maps $L^q$ to $L^r$ provided $1/q = 1/r + 2/n$, $1 < q < r < \infty$. 
\end{remark}


\section{Estimates when $\Re \alpha > 0$ and $|\arg \alpha| \geq \eta > 0$}\ilabel{sec:outside-sector}

We now turn to the more interesting case when $\Re \alpha \geq 0$. We shall prove, in this section, some estimates outside 
conic neighbourhood of the positive real axis. Below, 
$\phi(t)$ is a smooth function vanishing except on the interval $[1 - \delta, 1 + \delta]$ and equal to $1$ on the interval $[1 - \delta/2, 1 + \delta/2]$. Also, we write $\alpha = \beta + i\gamma$, $\beta, \gamma \in \RR$ for the real and imaginary parts of $\alpha$. 

\begin{proposition}\ilabel{outsidecone} 
Let $\alpha = \beta + i\gamma$ with $\beta > 0$ and $|\arg \alpha| \geq \eta > 0$. Then there is a constant $C$  depending only on $\eta$ such that 
\begin{equation}\ilabel{delta-alpha}
\big\| (\Delta - \alpha)^{-1} \big\|_{L^p(M) \to L^{p'}(M)} \leq C.
\end{equation}
Moreover, for $\phi$ as above, 
there exists $C>0$ such that for all $\Re \alpha > 0$, 
\begin{equation} \ilabel{disttospecleq1}
\Big\| \big( 1- \phi(\Delta/|\alpha|) \big)(\Delta-\alpha)^{-1} \Big\|_{L^p\to L^{p'}}\leq C. \end{equation}
\end{proposition}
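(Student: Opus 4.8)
The plan is to prove both estimates together by analysing the resolvent on the two spectral regions cut out by $\phi(\Delta/|\alpha|)$. First I would treat the region where $\Delta$ is bounded away from $|\alpha|$, i.e.\ the operator $\bigl(1-\phi(\Delta/|\alpha|)\bigr)(\Delta-\alpha)^{-1}$. On the support of $1-\phi(t)$ we have $|t-1|\geq\delta/2$, hence $|\Delta-\alpha|\geq c|\alpha|$ on the range of this spectral cutoff, with $c=c(\delta)$, uniformly in $\alpha$ with $\Re\alpha>0$. Writing $g(\Delta)=\bigl(1-\phi(\Delta/|\alpha|)\bigr)(\Delta-\alpha)^{-1}$ as a function of $\Delta$, I would split it as a piece supported in $\Delta\leq(1-\delta/2)|\alpha|$ and a piece supported in $\Delta\geq(1+\delta/2)|\alpha|$. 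For the high piece, $|g(\Delta)|\leq C\Delta^{-1}$ and more generally the symbol bounds needed show $g(\Delta)$ maps $L^p\to L^{p'}$ with norm dominated by that of $\Delta^{-1}$ composed with a bounded spectral multiplier; concretely one writes $g(\Delta)=\Delta^{-1}\cdot h(\Delta)$ where $h$ is a bounded Mikhlin-type multiplier (bounded on all $L^r$, $1<r<\infty$, by Gaussian heat kernel bounds), and then uses the $\Delta^{-1}:L^p\to L^{p'}$ boundedness \eqref{sobolev} from the previous section. For the low piece, supported in $[0,(1-\delta/2)|\alpha|]$, I would write it via the spectral measure as $\int_0^{(1-\delta/2)|\alpha|^{1/2}}(\lambda^2-\alpha)^{-1}\bigl(1-\phi(\lambda^2/|\alpha|)\bigr)\,dE_{\sqrt\Delta}(\lambda)$ and estimate using \eqref{restrictionest} (for small $\lambda$, after a dyadic decomposition, using $|\lambda^2-\alpha|\geq c|\alpha|\geq c\lambda^2$) together with the large-$\lambda$ bound $\|dE_{\sqrt\Delta}(\lambda)\|_{L^p\to L^{p'}}\leq C\lambda$ which follows from \eqref{spe} in the non-trapping case (or from the restriction estimate for the trapping case when $|\alpha|$ is bounded); the integral $\int |\lambda^2-\alpha|^{-1}\lambda\,d\lambda$ over $\lambda\lesssim|\alpha|^{1/2}$ converges uniformly. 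This gives \eqref{disttospecleq1}.

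Next, for \eqref{delta-alpha}, it remains to handle the complementary piece $\phi(\Delta/|\alpha|)(\Delta-\alpha)^{-1}$ when $|\arg\alpha|\geq\eta$. Here $\Delta$ is comparable to $|\alpha|$, so with $\alpha=\beta+i\gamma$ and $|\gamma|\geq c(\eta)|\alpha|$ we have the crucial lower bound $|\Delta-\alpha|\geq|\Im(\Delta-\alpha)|=|\gamma|\geq c(\eta)|\alpha|$ on the range of $\phi(\Delta/|\alpha|)$. Thus on this spectral window the resolvent is again a bounded multiplier of size $|\alpha|^{-1}\sim\Delta^{-1}$, and the same strategy as the high piece above works: write $\phi(\Delta/|\alpha|)(\Delta-\alpha)^{-1}$ as $\Delta^{-1}$ times a uniformly bounded Mikhlin multiplier, or more simply estimate directly through the spectral measure on the window $\lambda^2\in[(1-\delta)|\alpha|,(1+\delta)|\alpha|]$ using $\|dE_{\sqrt\Delta}(\lambda)\|_{L^p\to L^{p'}}\leq C\lambda\sim C|\alpha|^{1/2}$ and $|\lambda^2-\alpha|^{-1}\leq C(\eta)|\alpha|^{-1}$, with the $\lambda$-integral over an interval of length $\sim|\alpha|^{1/2}$, giving a net bound of $C(\eta)\cdot|\alpha|^{1/2}\cdot|\alpha|^{-1}\cdot|\alpha|^{1/2}=C(\eta)$. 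Adding this to \eqref{disttospecleq1} yields \eqref{delta-alpha}.

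The main obstacle I anticipate is making the "$\Delta^{-1}$ times a uniformly bounded multiplier" step genuinely uniform in $\alpha$: one must check that the multiplier $h_\alpha(\lambda)=\lambda^2(\lambda^2-\alpha)^{-1}\bigl(1-\phi(\lambda^2/|\alpha|)\bigr)$ (resp.\ with $\phi$ in place of $1-\phi$ in the sector case) satisfies Mikhlin--Hörmander bounds $|(\lambda\partial_\lambda)^k h_\alpha(\lambda)|\leq C_k$ with $C_k$ independent of $\alpha$, which requires tracking how derivatives of $\phi(\lambda^2/|\alpha|)$ scale (each $\lambda\partial_\lambda$ hitting the cutoff is $O(1)$) and that on the relevant support $|\lambda^2-\alpha|\gtrsim|\alpha|\gtrsim\lambda^2$ so the denominator is harmless. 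Granting a spectral-multiplier theorem of Mikhlin--Hörmander type on $(M,g)$ — available from the Gaussian heat kernel bounds of Cheng--Li--Yau together with finite propagation speed, as is standard — this is routine. The cleaner route, which avoids invoking a multiplier theorem at all, is the direct spectral-measure estimate sketched above, and I expect the only real care needed there is the dyadic summation at low frequencies in \eqref{disttospecleq1} and checking that all constants depend only on $\delta$ (and on $\eta$ for \eqref{delta-alpha}), not on $\alpha$.
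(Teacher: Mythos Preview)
Your approach is correct but takes a genuinely different and heavier route than the paper. The paper bypasses both the Mikhlin multiplier theorem and the uniform spectral-measure bound \eqref{spe} entirely, by factoring through $L^2$. From the Sobolev inequality \eqref{sobolev} (and duality) one gets $(\Delta+\beta)^{-1/2}:L^p\to L^2$ and $(\Delta+\beta)^{-1/2}:L^2\to L^{p'}$, uniformly in $\beta>0$. Then
\[
\|(\Delta-\alpha)^{-1}u\|_{L^{p'}}\leq C\|(\Delta+\beta)(\Delta-\alpha)^{-1}\|_{L^2\to L^2}\,\|u\|_{L^p},
\]
and the middle factor is an elementary supremum over the spectrum,
\[
\sup_{\sigma>0}\frac{\sigma+\beta}{\bigl((\sigma-\beta)^2+\gamma^2\bigr)^{1/2}},
\]
which is $O(\beta/\gamma)\leq C(\eta)$ when $|\arg\alpha|\geq\eta$, and, with the extra factor $1-\phi(\sigma/|\alpha|)$ inserted, is bounded uniformly for all $\Re\alpha>0$ because on the support of $1-\phi$ one has $|\sigma/|\alpha|-1|\geq\delta/2$. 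This is the whole proof.

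The comparison: your Mikhlin route is valid and does not require non-trapping either, but it invokes a Hörmander-type multiplier theorem on $(M,g)$ as a black box, whereas the paper needs only \eqref{sobolev} and the spectral theorem. Your alternative spectral-measure route, integrating $\|dE_{\sqrt\Delta}(\lambda)\|_{L^p\to L^{p'}}\leq C\lambda$ against $|\lambda^2-\alpha|^{-1}$, is also correct but does rely on the uniform-in-$\lambda$ restriction estimate \eqref{spe}, hence on the non-trapping hypothesis; the paper deliberately avoids this so that Proposition~\ref{outsidecone} (and the trapping clause of Theorem~\ref{mainth}) hold without it. In short, your plan works, but the paper's $L^2$-factorization trick is both shorter and more robust.
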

\begin{proof}  From \eqref{sobolev} and duality, we know that $(\Delta + \beta)^{-\demi}$ is bounded as an operator 
$L^2\to L^{p'}$ and $L^p\to L^2$, thus 
there exists $C>0$ independent of $\beta$ such that for all $u\in C_c^\infty(M)$ 
\[ ||(\Delta-\alpha)^{-1}u||_{L^{p'}}\leq C|| (\Delta + \beta)^{\demi}(\Delta-\alpha)^{-1}u||_{L^2}\leq 
C^2 \big\| (\Delta + \beta)(\Delta-\alpha)^{-1} \big\|_{L^2\to L^2} ||u||_{L^p}. \]
Now the $L^2\to L^2$ bound for the operator in the right hand side is 
\[ \big\| (\Delta + \beta)(\Delta-\alpha)^{-1} \big\|_{L^2\to L^2}\leq =\sup_{\sigma > 0}\Big(\frac{(\sigma + \beta)^2}{(\sigma-\beta)^2+\gamma^2}\Big)^{\demi}
\leq C\frac{\beta}{\gamma}\]
which is bounded by $C/\eta$ for some $C>0$. To prove \eqref{disttospecleq1}, we use the same argument as above to bound
\begin{multline}
\Big\| \big( 1- \phi(\Delta/|\alpha|) \big)(\Delta-\alpha)^{-1} \Big\|_{L^p\to L^{p'}}\leq C\Big\| \big(1-\phi(\Delta/|\alpha|)\big) (\Delta + \beta)(\Delta-\alpha)^{-1} \Big\|_{L^2\to L^2} \\ \leq C  \sup_{\sigma > 0} \big( 1- \phi \big(\frac{\sigma}{\sqrt{\beta^2 + \gamma^2}}\big) \big)\Big(\frac{(\sigma + \beta)^2}{(\sigma-\beta)^2+\gamma^2}\Big)^{\demi}.
\ilabel{12}\end{multline}
Since $\eta>0$ can be taken as small as we like in the proof above,
it remains to consider the case where  
$\alpha=\beta(1\pm i\eps)$ for all sufficiently small $\eps  \geq 0$. We then take $|\eps|$ sufficiently small relative to $\delta$, and  then using the property that $\phi = 1$ on $[1 - \delta/2, 1 + \delta/2]$, we see that 
 \[ 
 \sup_{\sigma > 0} \big( 1- \phi \big(\frac{\sigma}{\sqrt{\beta^2 + \gamma^2}}\big) \big)\Big(\frac{(\sigma + \beta)^2}{(\sigma-\beta)^2+\gamma^2}\Big)^{\demi}
\leq 
 \sup_{|\sigma/\beta-1|>\delta/4}\Big(\frac{(\sigma/\beta +1)^2}{(\sigma/\beta-1)^2+\eps^2 }\Big)^{\demi} \leq C'.\]
This achieves the proof.
 \end{proof}

\begin{remark}
Using interpolation between \eqref{delta-alpha} (or \eqref{disttospecleq1}) and 
the $L^2\to L^2$ norm of $(\Delta-\alpha)^{-1}$, we have for each $\eta>0$ that there is $C>0$ 
such that for all $q\in [\frac{2n}{n+2},2]$ with $\frac{1}{q}+\frac{1}{q'}=1$ and 
\begin{equation}\ilabel{interp}
\begin{gathered}
\|(\Delta-\alpha)^{-1}\|_{L^q\to L^{q'}}\leq C|\alpha|^{n(\frac{1}{q}-\demi)-1}, \,\,\, \forall \, \alpha\in \cc, |\arg(\alpha)|>\eta \\
\| \big( 1- \phi(\Delta/|\alpha|) \big)(\Delta-\alpha)^{-1} \|_{L^q\to L^{q'}}\leq C|\alpha|^{n(\frac{1}{q}-\demi)-1}, 
\,\,\, \forall \, \alpha\in \cc,\, {\rm Re}(\alpha)>0 .
\end{gathered}
\end{equation}
\end{remark}

We next prove that if we look at the \emph{difference} between the resolvents  $(\Delta - \alpha)^{-1}$ and $(\Delta - \overline\alpha)^{-1}$, this is uniformly bounded $L^p \to L^{p'}$ as we approach the real axis. 
\begin{coro}\ilabel{cor:diff} There exists $C>0$ such that for all $\alpha\in\cc$ with $\Re \alpha > 0$ and all 
$q\in[\frac{2n}{n+2},\frac{2(n+1)}{n+3}]$:
\begin{equation} \ilabel{resdiffest}
||(\Delta-\alpha)^{-1} - (\Delta - \overline\alpha)^{-1} ||_{L^q\to L^{q'}}\leq C|\alpha|^{n(\frac{1}{q}-\frac{1}{2})-1}. 
\end{equation}
\end{coro}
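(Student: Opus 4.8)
The plan is to express the difference of resolvents in terms of the spectral measure via Stone's formula, and then invoke the uniform restriction estimate \eqref{spe} from \cite{GHS2}, integrated against a suitable kernel in $\lambda$. Writing $\alpha = \beta + i\gamma$ with $\beta > 0$ and $\gamma \neq 0$ (the case $\gamma = 0$ being the boundary-value version, handled by the limiting absorption principle), the resolvent identity gives
\[
(\Delta - \alpha)^{-1} - (\Delta - \overline\alpha)^{-1} = (\alpha - \overline\alpha) (\Delta - \alpha)^{-1}(\Delta - \overline\alpha)^{-1} = 2i\gamma \int_0^\infty \frac{dE_{\sqrt{\Delta}}(\lambda)}{(\lambda^2 - \alpha)(\lambda^2 - \overline\alpha)}.
\]
Thus the $L^q \to L^{q'}$ norm of the left side is bounded by
\[
2|\gamma| \int_0^\infty \frac{\big\| dE_{\sqrt{\Delta}}(\lambda) \big\|_{L^q \to L^{q'}}}{|\lambda^2 - \alpha|^2} \, d\lambda \leq C|\gamma| \int_0^\infty \frac{\lambda^{n(1/q - 1/q') - 1}}{(\lambda^2 - \beta)^2 + \gamma^2} \, d\lambda,
\]
using \eqref{spe} (which is valid precisely in the stated range $q \in [\frac{2n}{n+2}, \frac{2(n+1)}{n+3}]$, noting $n(1/q - 1/q') - 1 = 2n(1/q - 1/2) - 1$; in the trapping case one restricts $\lambda$ to a bounded interval and handles large $\lambda$ by \eqref{disttospecleq1} combined with the analogous estimate for $\overline\alpha$).

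The remaining step is the elementary one-variable estimate: I claim
\[
|\gamma| \int_0^\infty \frac{\lambda^{2s-1}}{(\lambda^2 - \beta)^2 + \gamma^2} \, d\lambda \leq C |\alpha|^{s-1}, \qquad s := n\Big(\frac1q - \frac12\Big) \in \Big[1, \frac{n+1}{2}\Big].
\]
Substituting $u = \lambda^2$ turns the left side into $\frac{|\gamma|}{2}\int_0^\infty \frac{u^{s-1}}{(u-\beta)^2 + \gamma^2}\,du$. One splits the integral into the region $u \leq \beta/2$, the region $|u - \beta| \leq \beta/2$, and the region $u \geq 3\beta/2$. On the middle region $u^{s-1} \sim \beta^{s-1}$ and $\int |u-\beta|^{-2} + \gamma^{-2}$-type integrals give a factor $\sim \gamma^{-1}$ (using $\int \frac{du}{(u-\beta)^2+\gamma^2} \leq \pi/|\gamma|$), so that part is $\lesssim \beta^{s-1} \lesssim |\alpha|^{s-1}$ since $|\gamma| \leq |\alpha|$ forces $\beta \sim |\alpha|$ is not automatic — but either $\beta \gtrsim |\alpha|$, in which case we are done, or $|\gamma| \gtrsim |\alpha|$, in which case the denominator is $\gtrsim |\alpha|^2$ everywhere near $u = \beta$ and one estimates more crudely. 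On the outer region $u \geq 3\beta/2$ the denominator is $\gtrsim u^2$, and $|\gamma| \int u^{s-3}\,du$ converges at infinity precisely because $s - 3 < -1 \iff s < 2$, which holds when $n \le ...$; for $n \geq 4$ where $s$ can exceed $2$, one instead notes that \eqref{spe} is only being used for $\lambda$ in a compact set (after combining with \eqref{disttospecleq1} for large $\lambda$, exactly as in \eqref{12}), so the tail is harmless. The small-$u$ region is controlled since $s - 1 \geq 0$ and the denominator is $\gtrsim \beta^2$ there.

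The main obstacle is bookkeeping the uniformity in $\alpha$ across the whole half-plane $\Re\alpha > 0$: near the real axis ($|\gamma|$ small, $\beta$ large) the weight $|\lambda^2 - \alpha|^{-2}$ concentrates sharply at $\lambda = \sqrt\beta$ and one must see the $|\gamma|^{-1}$ blow-up of that concentration cancelled against the prefactor $|\gamma|$, while simultaneously the power $\lambda^{2s-1}|_{\lambda = \sqrt\beta} = \beta^{s - 1/2}$ must combine correctly — this is exactly why the scaling exponent $|\alpha|^{s-1} = |\alpha|^{n(1/q - 1/2) - 1}$ comes out, and it matches the Euclidean Kenig--Ruiz--Sogge scaling. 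Away from the real axis everything is crude. I expect this to be short given \eqref{spe}; the only genuine subtlety is ensuring that for $n \geq 4$, where $2s - 1$ can be large, the large-$\lambda$ behaviour is routed through \eqref{disttospecleq1} rather than through \eqref{spe}, so that the integral over $\lambda$ is effectively over a bounded interval and converges trivially. This yields \eqref{resdiffest}, completing the corollary.
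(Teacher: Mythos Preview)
Your approach is essentially the paper's: write the resolvent difference as $2i\gamma \int_0^\infty \big((\lambda^2-\beta)^2+\gamma^2\big)^{-1} dE_{\sqrt\Delta}(\lambda)$ and invoke the restriction estimate \eqref{spe}. The paper first peels off the spectrally non-localized piece via \eqref{disttospecleq1} and \eqref{interp}, then runs the integral only over $\lambda^2/|\alpha|\in\supp\phi$, whereas you integrate over all $\lambda$; both work.

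There is, however, a miscomputation that muddles your exposition. For $q\in[\tfrac{2n}{n+2},\tfrac{2(n+1)}{n+3}]$ the exponent $s=n(\tfrac1q-\tfrac12)$ lies in $[\tfrac{n}{n+1},1]$, \emph{not} in $[1,\tfrac{n+1}{2}]$. Consequently $2s-5\le -3$ and the full-line integral $\int_0^\infty \lambda^{2s-1}\big((\lambda^2-\beta)^2+\gamma^2\big)^{-1}d\lambda$ converges at infinity with room to spare; the paragraph about $s$ possibly exceeding $2$ for $n\ge 4$ and the need to reroute the tail through \eqref{disttospecleq1} is unnecessary. With the correct range of $s$, the scaling $u=\lambda^2=|\alpha|v$ reduces the claim to showing that $|\sin\theta|\int_0^\infty v^{s-1}\big((v-\cos\theta)^2+\sin^2\theta\big)^{-1}dv$ is bounded uniformly in $\theta=\arg\alpha\in(0,\pi/2]$; your three-region split handles this cleanly once $s\le 1$ is used (so $v^{s-1}$ is bounded near $v=1$ and $v^{s-3}$ is integrable on $[2,\infty)$).
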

\begin{proof} In view of \eqref{disttospecleq1}, it only remains to check that this is true when $\Delta/|\alpha|$ is spectrally localized near $1$. Writing $\alpha = \beta + i\gamma$ as above, we can estimate using \eqref{spe}
\begin{equation}\begin{gathered}
\Big\| \phi(\Delta/|\alpha|) \Big( (\Delta - \alpha)^{-1} - (\Delta - \overline\alpha)^{-1} \Big) \Big\|_{L^q \to L^{q'}} \\
\leq C \int_0^\infty \phi\big( \frac{\lambda^2}{|\alpha|}\big) \,  \Big| \frac1{\lambda - \beta - i\gamma} - \frac1{\lambda - \beta + i\gamma} \Big| \,  \big\| dE_{\sqrt{\Delta}}(\lambda) \big\|_{L^q \to L^{q'}} \, d\lambda \\
\leq C \int_0^\infty \phi\big( \frac{\lambda^2}{|\alpha|}\big) \frac{2|\gamma|}{(\lambda^2 - \beta)^2 + \gamma^2} \, \lambda^{n(\frac{1}{q}-\frac{1}{q'})} \, \frac{d\lambda}{\la}
\end{gathered}\end{equation}
and it is easy to check that $|\alpha|^{1-\ndemi(\frac{1}{q}-\frac{1}{q'})}$ times 
this integral depends only on $\gamma/\beta$ and is uniformly bounded as $\gamma/\beta \to 0$. We can then use \eqref{interp} to conclude.
\end{proof}

In particular if $q=2n/(n+2)$ is the Sobolev exponent, then $n(\frac{1}{q}-\frac{1}{2})-1=0$ and \eqref{resdiffest} is uniformly bounded 
in $\alpha$.


\section{Localized uniform estimates near the spectrum}\ilabel{sec:prelim}
Our aim to complete the proof of Theorem \ref{mainth}Ê is to
obtain a uniform estimate 
\begin{equation}\label{toprove}
||\phi(\Delta/|\alpha|)(\Delta-\alpha)^{-1}||_{L^p\to L^{p'}}\leq C\end{equation} 
when $|\arg \alpha|$ is small and $\phi\in C_0^\infty(\rr)$ is a function as in Proposition \ref{outsidecone}. 
We are not able to prove this directly, but rather we will later introduce a operator partition of unity, 
${\rm Id}=\sum_{i} Q_i(\alpha)$, depending on $|\alpha|$ and prove localized estimates  
\[|| Q_i(\alpha) \phi(\Delta/|\alpha|)(\Delta-\alpha)^{-1} Q_j(\alpha)||_{L^p\to L^{p'}}\leq C.\]

For that purpose, we start with an abstract result: 
Assuming that $Q(\alpha),Q'(\alpha)$ are bounded operators on $L^2$ such that 
pointwise bounds of the type \eqref{kernelest} Êare valid for $Q(\alpha)dE_{\sqrt{\Delta}}(\la)Q'(\alpha)$,
we show in the following Lemma a uniform estimate for the localized resolvent
\[|| Q(\alpha) \phi(\Delta/|\alpha|)(\Delta-\alpha)^{-1} Q'(\alpha)||_{L^p\to L^{p'}}\leq C.\]
This will be used with $Q(\alpha), Q'(\alpha)$ elements of a well chosen operator partition of 
 unity constructed in Section~\ref{sec:mpou}.

\begin{lemma}\ilabel{uniformonspect}
Let $\delta,\eta>0$ be small, let $\alpha\in \cc^*$ and $\phi\in C_0^\infty(\rr)$ as in Proposition \ref{outsidecone}. 
Assume that there exist $L^2$ bounded operators $Q(\alpha),Q'(\alpha)$ and $C>0$ such that 
\begin{equation}\ilabel{assumption}
 \Big| Q(\alpha)\pl^j_{\la}dE_{\sqrt{\Delta}}(\la)Q'(\alpha) (z,z') \Big|\leq C\la^{n-1-j}(1+\la d(z,z'))^{-\frac{n-1}{2}+j}
\end{equation}
for all $j\leq n/2$ and all $\la\in[(1-\delta)\sqrt{|\alpha|},(1+\delta)\sqrt{|\alpha|}]$. Then  there is $C'>0$ such that for all $\alpha\in \cc$
with $0<|\arg(\alpha)|\leq \eta$
\[ \Big\| Q(\alpha)\phi(\Delta/|\alpha|)(\Delta-\alpha)^{-1} Q'(\alpha)  \Big\|_{L^p\to L^{p'}}\leq C'.\] 
\end{lemma}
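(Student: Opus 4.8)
The strategy is to express the localized resolvent as an integral of the spectral measure against the resolvent symbol, and then bound the integral operator by combining the pointwise kernel bounds \eqref{assumption} with the $L^p\to L^2$ and $L^2\to L^{p'}$ mapping properties already at our disposal. Write $\alpha = \beta + i\gamma$ with $\beta>0$ and $|\gamma|\leq \eta\beta$, and set $\la_0 = \sqrt{|\alpha|}$. Using the spectral theorem,
\[
Q(\alpha)\phi(\Delta/|\alpha|)(\Delta-\alpha)^{-1}Q'(\alpha) = \int_0^\infty \frac{\phi(\la^2/|\alpha|)}{\la^2-\alpha}\, Q(\alpha)\, dE_{\sqrt{\Delta}}(\la)\, Q'(\alpha),
\]
and the integrand is supported in $\la\in[(1-\delta)\la_0,(1+\delta)\la_0]$, which is exactly the range in which \eqref{assumption} holds. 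The point of the $\phi$ cutoff is that near this range $\la^2-\alpha = (\la-\sqrt\alpha)(\la+\sqrt\alpha)$ has $|\la+\sqrt\alpha|\sim\la_0$, so the singular behaviour is governed entirely by $(\la - \sqrt\alpha)^{-1}$, with $\sqrt\alpha$ at distance $\sim|\gamma|/\la_0$ from the real axis.

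The main step is to estimate, for the operator with Schwartz kernel
\[
K(z,z') = \int_0^\infty \frac{\phi(\la^2/|\alpha|)}{\la^2-\alpha}\, \big(Q(\alpha)\, dE_{\sqrt{\Delta}}(\la)\, Q'(\alpha)\big)(z,z')\, d\la,
\]
the $L^p\to L^{p'}$ operator norm. Here I would split the $\la$-integral into a `near' piece, $|\la - \la_0| \leq |\gamma|/\la_0$ (or some fixed multiple thereof), and a `far' piece. On the far piece one integrates by parts repeatedly in $\la$, moving derivatives onto $dE_{\sqrt{\Delta}}(\la)$ — this is why \eqref{assumption} is assumed for all $j\leq n/2$ — gaining a factor $|\la-\la_0|^{-1}$ (up to $|\la+\sqrt\alpha|^{-1}\sim\la_0^{-1}$ factors and derivatives of $\phi$) each time, while the worsening $(1+\la d(z,z'))$-power stays summable because $-\tfrac{n-1}{2}+j \leq \tfrac12$ for $j \leq n/2$. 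This reduces everything, after the boundary terms are controlled, to a kernel bounded pointwise by $C\la_0^{n-2}(1+\la_0 d(z,z'))^{-(n-1)/2}$ on the near piece (and something at least as good on the far piece after the integration by parts), which is, up to the scaling factor, exactly the bound satisfied by the kernel $d(z,z')^{-(n-2)}$ smoothly cut off — so the abstract Hardy–Littlewood–Sobolev / Stein–Weiss type estimate for kernels comparable to $d(z,z')^{-(n-2)}$ on a space of homogeneous type of dimension $n$ (as invoked in Remark \ref{rem:Green}, via \cite{GG}) gives the $L^p\to L^{p'}$ bound uniformly, provided the scaling powers of $\la_0$ cancel. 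Checking that cancellation: $\la_0^{n-2}$ times the change of variables $z\mapsto \la_0 z$ accounts for the fact that $d(z,z')^{-(n-2)}$ maps $L^p\to L^{p'}$ precisely when $p = 2n/(n+2)$, for which $n(1/p - 1/p') = 2$, so no power of $\la_0$ survives; this is the Sobolev endpoint, and it is exactly why the statement is clean at $p = 2n/(n+2)$ and why the more general exponents in \eqref{L^qL^q'} carry the factor $|\alpha|^{n(1/q - 1/2) - 1}$.

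I expect the main obstacle to be the careful bookkeeping in the integration by parts on the `far' piece: one must track the derivatives of $\phi(\la^2/|\alpha|)$ (which produce factors of $\la_0^{-1}$ but are harmless since $\phi$ is fixed), the derivatives of $(\la^2-\alpha)^{-1}$ (producing negative powers of $|\la-\sqrt\alpha|$, which is where the gain comes from but also where one must be careful that the gain is not eaten by the region $|\la - \la_0|\lesssim|\gamma|/\la_0$, i.e.\ where $|\la-\sqrt\alpha|$ does not actually become small relative to its lower bound $\sim|\gamma|/\la_0$), and show that after $\lfloor n/2\rfloor$ integrations by parts the remaining integral converges and is bounded by the model kernel above uniformly in $\alpha$ with $|\arg\alpha|\leq\eta$. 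A secondary subtlety is to ensure the whole argument is uniform as $|\arg\alpha|\to 0$ (including the limiting absorption case $\gamma\to 0$, where one uses that the integration-by-parts argument only ever needs $|\gamma|/\beta$ small, not bounded below), and that the endpoint Hardy–Littlewood–Sobolev estimate for kernels of size $d(z,z')^{-(n-2)}$ is genuinely applicable here — i.e.\ that the volume of balls on $(M,g)$ is comparable to $\rho^n$ for all radii, which holds on asymptotically conic manifolds, so that \cite{GG} applies exactly as in Remark~\ref{rem:Green}.
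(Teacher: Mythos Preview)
There is a genuine gap. Your claimed pointwise bound $C\lambda_0^{n-2}(1+\lambda_0 d(z,z'))^{-(n-1)/2}$ on the localized resolvent kernel is essentially correct (and is what the integration by parts yields), but this kernel is \emph{not} comparable to $d(z,z')^{-(n-2)}$: for $d \gg \lambda_0^{-1}$ it behaves like $\lambda_0^{(n-3)/2} d^{-(n-1)/2}$, which for $n\geq 4$ decays strictly more slowly than $d^{-(n-2)}$. Consequently the abstract Hardy--Littlewood--Sobolev estimate of Remark~\ref{rem:Green} does not apply. In fact the operator with kernel $(1+\lambda_0 d)^{-(n-1)/2}$ is \emph{unbounded} from $L^p$ to $L^{p'}$ once $n\geq 4$: testing on $f=\mathbf{1}_{B(z_0,R)}$ with $R\gg \lambda_0^{-1}$ gives $\|f\|_{L^p}\sim R^{n/p}$ while the output is $\gtrsim R^{(n+1)/2}$ on a ball of radius $R/2$, and $(n+1)/2 + n/p' > n/p$ exactly when $n>3$. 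So no amount of bookkeeping in the integration by parts will close the argument; the pointwise decay you extract from \eqref{assumption} is simply too weak on its own.

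What is missing is an $L^2$ input that does not come from kernel bounds. The paper's proof supplies this via Stein complex interpolation: one embeds the operator of interest as $H_{-1,\alpha}(\sqrt{\Delta/|\alpha|})$ in the analytic family $H_{s,\alpha}(x)=e^{s^2}|\alpha|^{s}\phi(x^2)(x^2-e^{i\arg\alpha})^{s}$. On the line $\Re s=0$ the $L^2\to L^2$ bound is immediate from the spectral theorem --- this is the crucial endpoint that has no kernel analogue. On the line $\Re s=-n/2$ one integrates by parts (exactly as you propose, $(n-1)/2$ times for $n$ odd, with a Phragm\'en--Lindel\"of step between $n/2-1$ and $n/2$ derivatives for $n$ even) so that the exponent of $(1+\lambda d)$ in \eqref{assumption} becomes \emph{zero}, giving a uniform $L^1\to L^\infty$ bound. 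Interpolating between these two lines lands at $s=-1$ with the desired $L^p\to L^{p'}$ estimate. The point is that the spatial decay $(1+\lambda d)^{-(n-1)/2}$ is the restriction/Tomas--Stein decay rate, not the Sobolev rate $d^{-(n-2)}$; the passage from one to the other requires the $L^2$ theory, and complex interpolation is the standard mechanism to effect it.
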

\begin{proof} Let us prove the result for the case $\arg(\alpha)<0$; the other case is similar. 
We shall prove this by complex interpolation. Let us consider the analytic family of operators 
$H_{s,\alpha}(\sqrt{\Delta/|\alpha|})$ in $\Re(s)\leq 0$ if  
\[H_{s,\alpha}(x):= e^{s^2}|\alpha|^{s}\phi(x^2)(x^2-e^{i\arg(\alpha)})^s\]
and the logarithm is defined with a cut at $\rr^-$. Since the operator we are interested in is
$H_{-1,\alpha}(\sqrt{\Delta/|\alpha|})$ and since by the spectral theorem
\[\sup_{t\in \rr}||H_{it,\alpha}(\sqrt{\Delta/|\alpha|})||_{L^2\to L^2}\leq C,\]  
it suffices to prove that 
\begin{equation}\ilabel{toshow}
\sup_{t\in \rr}||H_{-\ndemi +it,\alpha}(\sqrt{\Delta/|\alpha|})||_{L^1\to L^\infty}\leq C
\end{equation}
and the result follows by complex interpolation for the family of operators
$H_{s,\alpha}(\sqrt{\Delta/|\alpha|})$.

Let us first assume that $n$ is odd. Then we write 
\[\begin{split}
\frac{H_{-\ndemi+it,\alpha}(\sqrt{\Delta/|\alpha|})}{e^{(it-\ndemi)^2}}=&|\alpha|^{-\ndemi+it}
\int_0^\infty\phi(\frac{\la^2}{|\alpha|})(\frac{\la^2}{|\alpha|}-e^{i\arg(\alpha)})^{-\ndemi+it}dE_{\sqrt{\Delta}}(\la)d\la\\
=&\frac{|\alpha|^{-\frac{n-1}{2}+it}}{L(t)}
\int_0^\infty\phi(\la^2)(\frac{1}{2\la}\pl_{\la})^{\frac{n-1}{2}}
\Big(\la^2-e^{i\arg(\alpha)}\Big)^{-\demi+it}dE_{\sqrt{\Delta}}(\la|\alpha|^\demi)d\la
\end{split}
\]
where $L(t)$ is a polynomial such that $|L(t)|>C>0$ for all $t\in\rr$. We compose with $Q(\alpha)$ on the left and $Q'(\alpha)$ on the right and integrate by parts by using the vanishing order at $\la=0$ in 
\eqref{assumption}, this yields 
\[\begin{gathered}
e^{-(it-\ndemi)^2}Q(\alpha)H_{-\ndemi+it,\alpha}(\sqrt{\Delta/|\alpha|})Q'(\alpha)=\\
\frac{|\alpha|^{-\frac{n-1}{2}+it}}{L(t)}
\int_0^\infty
(\la^2-e^{i\arg(\alpha)})^{-\demi+it}Q(\alpha)(\pl_{\la}\frac{1}{2\la})^{\frac{n-1}{2}}
\Big(\phi(\la^2)dE_{\sqrt{\Delta}}(\la|\alpha|^\demi)\Big)Q'(\alpha)d\la
\end{gathered}
\]
Using the estimate \eqref{assumption} with $j\leq \frac{n-1}{2}$, we deduce the bound
\[ \Big\|Q(\alpha)(\pl_{\la}\frac{1}{2\la})^{\frac{n-1}{2}}
\Big(\phi(\la^2)dE_{\sqrt{\Delta}}(\la|\alpha|^\demi)\Big)Q'(\alpha) \Big\|_{L^1\to L^\infty}\leq C|\alpha|^{\frac{n-1}{2}}\]
and therefore 
\[ \Big\| Q(\alpha)H_{-\ndemi+it,\alpha}(\sqrt{\Delta/|\alpha|})Q'(\alpha) \Big\|_{L^1\to L^\infty}\leq 
Ce^{\pi|t|-t^2}\int_{0}^2|\la^2-1|^{-\demi}d\la\leq C.\]

We now want to deal with the case $n$ even. Let us write, using integration by parts ($n/2$ times) as before,
\[\frac{H_{-\frac{n+1}{2}+it,\alpha}(\sqrt{\Delta/|\alpha|})}{e^{(it-\frac{n+1}{2})^2}}=\frac{|\alpha|^{-\frac{n}{2}+it}}{L(t)}
\int_0^\infty(\la^2-e^{i\arg(\alpha)})^{-\demi+it}(\pl_{\la}\frac{1}{2\la})^{\frac{n}{2}}
\Big(\phi(\la^2)dE_{\sqrt{\Delta}}(\la|\alpha|^\demi)\Big)d\la
\] 
for some polynomial $L(t)$ such that $|L(t)|>C>0$ for all $t\in\rr$. 
We multiply this by $Q(\alpha)$ on the left and $Q'(\alpha)$ on the right and use \eqref{assumption} to obtain the pointwise estimate
\begin{equation}\ilabel{estimateleft}
|Q(\alpha)H_{-\frac{n+1}{2}+it,\alpha}(\sqrt{\Delta/|\alpha|})Q'(\alpha)(z,z')|\leq C|\alpha|^{-\demi}(1+(1+\delta)|\alpha|^\demi d(z,z'))^\demi.
\end{equation}
Similarly, we have by integration by parts ($n/2-1$ times)
\[\frac{H_{-\frac{n-1}{2}+it,\alpha}(\sqrt{\Delta/\alpha})}{e^{(it-\frac{n-1}{2})^2}}=\frac{|\alpha|^{-\frac{n}{2}-1+it}}{L(t)}
\int_0^\infty(\la^2-e^{i\arg(\alpha)})^{-\demi+it}(\pl_{\la}\frac{1}{2\la})^{\frac{n}{2}-1}
\Big(\phi(\la^2)dE_{\sqrt{\Delta}}(\la|\alpha|^\demi)\Big)d\la
\] 
for some polynomial $L(t)$ as before, and using  \eqref{assumption} we get 
\begin{equation}\ilabel{estimateright}
|Q(\alpha)H_{-\frac{n-1}{2}+it,\alpha}(\sqrt{\Delta/|\alpha|})Q'(\alpha)(z,z')|\leq C|\alpha|^{\demi}\Big(1+(1-\delta)|\alpha|^\demi d(z,z')\Big)^{-\demi}.
\end{equation}
Since for each $(z,z')$ the Schwartz kernel 
$Q(\alpha)H_{s,\alpha}(\sqrt{\Delta/|\alpha|})Q'(\alpha)(z,z')$ is holomorphic in $s$ in the strip $|{\rm Re}(s)+n/2|\leq 1/2$, we can use Phragmen-Lindelof and \eqref{estimateleft}, \eqref{estimateright}Ê to deduce that 
\[ \Big|Q(\alpha)H_{-\frac{n}{2},\alpha}(\sqrt{\Delta/|\alpha|})Q'(\alpha)(z,z') \Big|\leq C
\sup_{w>0}\Big(\frac{1+(1+\delta)w}{1+(1-\delta)w}\Big)^{\frac{1}{4}}\leq C. \]
This ends the proof.
\end{proof}

\begin{remark}\ilabel{2(n+1)/(n+3)} The same type of proof shows that in fact, or all $q\in [\frac{2n}{n+2},\frac{2(n+1)}{n+3}]$ 
and $\frac{1}{q'}+\frac{1}{q}=1$
there is $C>0$ such that for all $\alpha>0$ 
\begin{equation}\ilabel{endpoint}
\Big\| Q(\alpha)\phi(\Delta/\alpha)(\Delta-\alpha\pm i0)^{-1} Q'(\alpha)  \Big\|_{L^q\to L^{q'}}\leq C\alpha^{n(\frac{1}{q}-\demi)-1}.
\end{equation}
under assumption \eqref{assumption} for $j\leq (n+1)/2$. Let us give a brief argument, which is due to Adam Sikora whom we gratefully acknowledge. We want to interpolate the norms of the operator
 $H_{s,\alpha}(\sqrt{\Delta/\alpha})$ between 
${\rm Re}(s)=-\frac{n+1}{2}$ (for $L^1\to L^\infty$) and ${\rm Re}(s)=0$ (for $L^2\to L^2$).
First, let $\chi_+^z$ be the family of distributions on $\rr$ defined by
analytic continuation of $\chi_+^z(\la):=\la_+^{z}/\Gamma(z+1)$ in $z\in \cc$; in particular $\chi^{-k}_+=\delta^{k-1}_{0}$ if $k\in\nn$. Then
the following estimate holds for all $a<b<c<0$ and $b=\theta a+(1-\theta)c$: there is $C>0$ such that for all $f\in C_0^\infty(\rr)$ and $t\in\rr$, 
\begin{equation}\ilabel{Adamtrick} 
||(\la\pm i0)^{b+it}* f||_{L^\infty}\leq C(1+|t|)e^{\frac{\pi}{2}|t|}||\chi_+^{a}* f||_{L^\infty}^{\theta}||\chi_+^{c}* f||^{1-\theta}_{L^\infty}.
\end{equation}
The proof is an exercise, and is very similar to the proof of Lemma 3.3 in \cite{GHS2}.
To get estimate on the $L^\infty$ norm of the Schwartz kernel of $H_{s,\alpha}(\sqrt{\Delta/\alpha})$ on the line ${\rm Re}(s)=-\frac{n+1}{2}$, we write as kernels
\[H_{s,\alpha}(\sqrt{\Delta/\alpha})(z,z')=e^{s^2}\alpha^{s+\demi}\Big\cjg (\la-1+i0)^{s}, \frac{\phi(\la)}{2\sqrt{\la}}Q(\alpha)dE_{\sqrt{\Delta}}(\alpha\sqrt{\la})Q'(\alpha)(z,z')\Big\cjd\] 
where the pairing is the distribution pairing in $\la$. If $n$ is odd, we set $a=-\frac{n+3}{2}$, $b=-\frac{n+1}{2}$ and $c=-\frac{n-1}{2}$
with $a,c$ negative integers and apply \eqref{Adamtrick} together with the estimates \eqref{assumption} (with $j=-a-1$ and 
$j=-c-1=-a-3$), to deduce that there is $C>0$ such that for all $t\in\rr$
\[|H_{b+it,\alpha}(\sqrt{\Delta/\alpha})(z,z')|\leq C\alpha^{-\demi}\sup_{w>0}\Big(\frac{1+(1+\delta)w}{1+(1-\delta)w}\Big)^\demi\leq C \alpha^{-\demi}.\]
Using the complex interpolation Lemma in \cite[p 385]{St}, this gives \eqref{endpoint} for $q=\frac{2(n+1)}{n+3}$ and the other $q$ are obtained by interpolating with $q=p$. In even dimension, the similar argument works with $a=-\frac{n+2}{2}$, $b=-\frac{n+1}{2}$ and $c=-\frac{n}{2}$. 
\end{remark}

\section{Microlocal partition of unity}\ilabel{sec:mpou}

\subsection{Pseudodifferential calculus}\ilabel{psido} 
We start with some preliminaries on pseudo-differential operators in our setting. The natural class of pseudo-differential operators for this geometry is the scattering calculus introduced by Melrose \cite{Mel}.
The semi-classical version (high-frequency) is defined in Vasy-Zworski \cite{VaZw} and in 
details in Appendix A in Wunsch-Zworski \cite{WuZw}. 
We will give a brief review of this calculus and refer the reader to these articles, for further details 
(see also\cite[Sec. 2]{Da} for wave-front sets discussions). The Euclidean version of the semiclassical 
calculus can be found for instance in the book of Zworski \cite{Zw}.\\

\noindent\textbf{Phase space.}
Recall that $g$ is an asymptotically conic metric on $M$. In the case that it is exactly conic near infinity, it takes the form $dx^2/x^4 + h/x^2$ when $x$ is small, or equivalently $dr^2 + r^2 h$ when $r = 1/x$ is large, with $h$ independent of $x$. A frame of uniformly bounded vector fields (with respect to $g$) is given by $x^2 \partial_x$ and $x \partial_{y_i}$ for small $x$; dually, a uniformly bounded coframe is given by $dx/x^2$ and $dy_i/x$. These scalings motivate the introduction of the \emph{scattering cotangent bundle} as the natural phase space for dynamics (such as geodesic flow) on $(M, g)$. 
The scattering cotangent bundle 
${^\SC T^*}\bbar{M}$ is a smooth bundle over $\bbar{M}$ defined as follows : let $\eps>0$ be small, then  
over $\{x\geq \eps\}$, ${^\SC T^*}\bbar{M}$ is simply $T^*M|_{x\geq \eps}$, 
and over $\{x\leq 2\eps\}$ its smooth sections are given by linear combinations over  $C^\infty(M)$ 
of  $dr = -dx/x^2$ and $\omega/x$ where $\omega$ are $1$-forms smooth up to $\pl \bbar{M}$. In local coordinates $(x,y_1,\dots,y_{n-1})$ near the boundary 
(where $y_i$ are local coordinates on $\pl\bbar{M}$),  the bundle ${^\SC T^*}\bbar{M}$ is locally spanned by
\[\frac{dx}{x^2}, \, \frac{dy_1}{x}, \, \dots,\, \frac{dy_{n-1}}{x}.\]
Locally near a point of $\pl\bbar{M}$, 
we use the coordinates  for a point $\xi\in {^\SC T}^*\bbar{M}$ 
\[\xi= \nu d\big(\frac{1}{x} \big)+ \mu.\frac{dy}{x}.\]
Thus $(\nu, \mu)$ form linear coordinates on each fibre of ${^\SC T^*}\bbar{M}$ near the boundary. 
For example, if $M = \RR^n$, with Euclidean coordinate $z$, then $(\nu, \mu)$ are the radial and angular components of the cotangent variable $\zeta$ dual to $z$. 
The geodesic flow for the metric $g$ acts on ${^\SC T^*}\bbar{M}$ and preserve the energy levels $|\xi|_g={\rm const}$. We 
shall use the notation $g^t$ for the geodesic flow at time $t$.\\

\noindent\textbf{Symbols.} 
Let $h_0>0$ and $h\in(0,h_0]$ the semi-classical parameter. A (semi-classical) 
symbol $a$ on $\rr^n$ in the class $S^{m,\ell,k}(\rr^{n})$ with $(m,l,k)\in\rr^3$ is defined to be a smooth function on $[0,h_0)_h\x \rr^{2n}$
satisfying: for all $\alpha,\beta$ multi-indices,  there exists $C_{\alpha,\beta}$ such that for all $h,z,\zeta$ 
\[|\pl_z^\alpha\pl^\beta_\zeta a(h,z,\zeta)|\leq C_{\alpha,\beta} h^{-k}(1+|z|)^{-\ell-|\alpha|}(1+|\zeta|)^{m-|\beta|}.\] 
A symbol is \emph{classical} if 
$h^{k}\kappa^{m}x^{-\ell}a\in C^\infty([0,h_0)\x S^n_+\x S^n_+)$ where $S^n_+$ is the radial smooth compactification of $\rr^n$ and $x=1/|z|$, $\kappa=1/|\zeta|$. 
Similarly one can fiber radially compactify the cotangent space ${^\sca T}^*\bbar{M}$ into ${^\sca \bbar{T}}^*\bbar{M}$ 
and a classical 
symbol in $S^{m,\ell,k}(M)$ is defined to be a function $a$ on $[0,h_0)\x {^\sca \bbar{T}}^*\bbar{M})$ such that   $h^{k}\kappa^{m}x^{-\ell}a\in C^\infty([0,h_0)\x {^\sca \bbar{T}}^*\bbar{M})$
where $\kappa$ is a boundary defining function of the fiber infinity.  

One can also define $S^{m,\ell,k}(M)$  by reducing to the $\rr^n$ case.
Consider an neighbourhood of a point $y_0 \in \partial\bbar{M}$ of the form $\{ (x, y) \in M \mid x < \epsilon, y \in U \}$ where $U$ is a neighbourhood of $y_0$ in $\partial \bbar{M}$. Choosing a diffeomorphism $\omega$ from $U$ to an open subset $U'$ in $S^{n-1}$, we map $(x, y)$ to $\omega(y)/x \in \RR^n$ using the standard embedding of $S^{n-1}$ in $\RR^n$. We call this a `conic type' chart. Such charts induce a smooth map between the associated scattering cotangent bundles, allowing one to define classical symbols in $S^{m,\ell,k}(M)$ as those that can be pulled back from $S^{m,\ell,k}(\rr^{n})$ via such conic-type charts. 

We shall only consider classical symbols in what follows and, by abuse of notation, 
we denote by $S^{m,\ell,k}(M)$ the class of classical  symbols of order $(m,\ell,k)$ on $M$.
There is a principal symbol map $\sigma: S^{m,\ell,k}(M)\to S^{m,\ell,k}(M)/S^{m-1,\ell+1,k-1}(M)$ which assigns the leading 
term in the asymptotic expansion as $h\kappa x \to 0$, ie. at the boundary of $[0,1)\x {^\sca \bbar{T}}^*\bbar{M}$.\\

\noindent\textbf{Quantization.} Let $\dot{C}^\infty(\bbar{M})$ be the space of smooth functions on $\bbar{M}$ vanishing
to infinite order at $\pl\bbar{M}=\{x=0\}$, and $C^{-\infty}(\bbar{M})$ its dual.
We say that an operator $A:\dot{C}^\infty(\bbar{M})\to C^{-\infty}(\bbar{M})$ is in $\Psi^{m,\ell,k}(M)$ if it can be written, 
up to a residual operator (ie. mapping $C^{-\infty}(\bbar{M})$ to $h^\infty \dot{C}^\infty(\bbar{M})$), 
as a finite sum of operators $A_j$ with Schwartz kernels supported in $U_j\x U_j$ in the Euclidean charts $U_j$ with coordinates $z$ 
(of conic type near infinity and relatively compact otherwise) and of the form 
\[A_ju(z)=\frac{1}{(2\pi h)^n}\int_{\rr^n}e^{i\frac{(z-z').\zeta}{h}}a_j(h,z,\zeta)u(z)dzd\zeta, \quad z\in U
\]
for $u$ supported in the chart and $a_j\in S^{m,\ell,k}(\rr^n)$. 
There is a well defined principal symbol map $\sigma:\Psi^{m,\ell,k}(M)\to S^{m,\ell,k}(M)/S^{m-1,\ell+1,k-1}(M)$. 
We can also define a (semiclassical) quantization ${\rm Op}_h$  such that ${\rm Op}_h:S^{m,\ell,k}(M)\mapsto \Psi^{m,\ell,k}(M)$ for all $(m,\ell,k)$; this can be done by choosing a set of  coordinates, two associated partitions of unity $\sum_j \varphi_j=1=\sum_j\psi_j$ with $\psi_j\varphi_j=\varphi_j$, and then 
set ${\rm Op}_h(a)=\sum_j \psi_j A_j\varphi_j$ where we use the formula above for $A_j$ with $a_j$ the pullback of $a$ in the 
chart. The space $\Psi^{*,*,*}(M)$ forms an algebra and 
$\Psi^{m,\ell,k}(M).\Psi^{m',\ell',k'}(M)\subset \Psi^{m+m',\ell+\ell',k+k'}(M)$.
The principal symbol is multiplicative: 
$\sigma(AB)=\sigma(A)\sigma(B)$. An operator $A\in \Psi^{m,\ell,0}(M)$ is said to be elliptic at $K\subset {^\sca \bar{T}^*M}$ if 
$\sigma^{m}x^{\ell}\sigma(A)\geq c>0$ on $K$ for some constant $c$.
One also has  that if $A\in\Psi^{m,\ell,k}(M), B\in \Psi^{m',\ell',k'}(M)$, then $[A,B]\in \Psi^{m+m-1,\ell+\ell'+1,k+k'-1}(M)$ 
and 
\begin{equation}\ilabel{commut}
\sigma([A,B])=\frac{h}{i}\{\sigma(A),\sigma(B)\}
\end{equation}  
where $\{a,b\}$ is the Poisson bracket.\\ 

\noindent\textbf{Wave-front sets.}
The wave front set $\WF'(A)$ of $A={\rm Op}(a)$
is a subset of the boundary of $[0, h_0) \times {^\sca \bbar{T}}^*\bbar{M}$, defined as the 
 complement in $\partial \big([0, h_0) \times {^\sca \bbar{T}}^*\bbar{M} \big)$ of those points $\xi$ 
 such that in a neighborhood of $\xi$, $a$ vanishes to all orders  at all boundary 
faces of $[0,h_0)\x {^\sca \bbar{T}}^*\bbar{M}$. In other words, $\pl^{\alpha}a =\mc{O}(h^Nx^N\kappa^{N})$ for 
all $N\in \nn$ and all multi-indices $\alpha$ near $\xi$ (here $\kappa$ is a defining function for 
the fiber boundary, in Euclidean local coordinates $(z,\zeta)$ we can take $\kappa=1/|\zeta|$). This notion of $\WF'$ is globally 
well defined and 
\begin{equation}\ilabel{WFsets}
\WF'(AB)\subset \WF'(A)\cap \WF'(B).\end{equation}
A function $u\in x^{-N}H^{-N}(M)$ with  $||u||_{H^{-N}}=\mc{O}(h^{-N})$ for some $N\in\nn$ 
is said to be \emph{tempered}. We define its wave-front set  
to be the complement of the set of points $\xi\in \partial \big([0, h_0) \times {^\sca \bbar{T}}^*\bbar{M} \big)$ such that there exists $A\in \Psi^{m,\ell,0}(M)$ elliptic
near $\xi$ such that $||x^{-N}Au||_{L^2}=\mc{O}(h^\infty)$ for all $N\in\nn$. In particular, we have 
\[\WF'(Au)\subset \WF'(A)\cap \WF'(u)\]
if $A\in \Psi^{m,\ell,k}(M)$.\\

\noindent\textbf{Geodesic flow near infinity}.
We let $p$ be the Hamiltonian on ${^\SC T}^*\bbar{M}$ defined by $p(m,\xi)=g_m(\xi,\xi)$.
If the metric $g$ is written near $\pl\bbar{M}$ under the form $g=dx^2/x^4+h_x/x^2$ where $h_x$ is a smooth family of metric on $\pl\bbar{M}$
for $x\in[0,\eps)$, then the principal symbol of $\Delta_g$ near $\pl\bbar{M}$ is 
\[p(x,y;\xi)=|\xi|^2_g=\nu^2+ |\mu|^2_{h_x}\]
and the Hamiltonian vector field associated to $p$ is (see \cite{VaZw}) 
\[H_p= -2x\nu (x\pl_x) +(2 x|\mu|^2_{h_x}-x^2\pl_x|\mu|^2_{h_x})\pl_\nu-2x\nu \mu.\pl_\mu+xH_{|\mu|^2_{h_x}}\]
where $H_{|\mu|^2_{h_x}}$ is the Hamiltonian vector field of $|\mu|^2_{h_x}$ on $\pl\bbar{M}$.
Near $x=0$, the equation for the geodesic flow is given by (dot denotes time derivative)
\begin{equation}\ilabel{floweq}
\begin{gathered}
\dot{x}(t)= -2x(t)^2\nu(t), \quad \dot{\nu}(t)=2x(t)|\mu(t)|^2_{h_x}-x(t)^2\pl_x(|\mu|^2_{h_x})(t)\\
\dot{\mu}(t)=-2x(t)\nu(t)\mu(t)-x(t)\pl_y(|\mu|^2_{h_x})(t), \quad \dot{y}(t)=2x(t)(\pl_{\mu}|\mu|^2_{h_x})(t).
\end{gathered}\end{equation}
We denote the Hamiltonian flow at time $t$ by $g^t$.
As described in \cite{Mel,MZ}, the Hamiltonian $H_p$ can be written near $\pl\bbar{M}$ as 
$H_p=x\hat{H}_p+x^2V$ with $V$ smooth on ${^\SC T}^*\bbar{M}$ and tangent to  its boundary, and  
\begin{equation}\ilabel{hatHp}
\hat{H}_p:= -2\nu (x\pl_x+\mu.\pl_\mu) +2 |\mu|^2_{h_0}\pl_\nu+H_{|\mu|^2_{h_0}}.
\end{equation}
This last vector field is tangent to $\{x=0,p=1\}\cap{^\SC T}^*\bbar{M}$ and vanishes there at $\mu=0$ only. 
Moreover one has $\dot{\nu}(t)>0$ along its integral curve as long as $|\nu(t)|<1$.\\

\noindent\textbf{Outgoing/incoming relations}. Let $Q,Q'\in \Psi^{-\infty,0,0}(M)$ be two pseudodifferential operators. 
We say that $Q$ is  \emph{not  outgoing-related to $Q'$} if the forward flowout from $\WF'(Q')$ 
by the geodesic flow does not meet $\WF'(Q)$, 
that is if $g^t(\WF'(Q'))\cap \WF'(Q)=\emptyset$ for all $t\geq 0$. 
For boundary points $\xi\in {^\SC T}_{\pl \bbar{M}}^*\bbar{M}$, the action of the flow $g^t$ on $\xi$ 
is defined to be the flow at time $t$ for the vector field $\hat{H}_p$ acting on ${^\SC T}_{\pl \bbar{M}}^*\bbar{M}$. 
Similarly we say that $Q$ is  \emph{not  incoming-related to $Q'$} if the backward flowout from $\WF'(Q')$ 
by the geodesic flow does not meet $\WF'(Q)$, or equivalently if $Q'$ is not outgoing-related to $Q$.

\subsection{The microlocal partition of unity}\ilabel{micpart}

Let $\alpha=\beta+i\gamma$ with $\beta>\beta_0$  for some fixed $\beta_0$ and let $h=1/\sqrt{|\alpha|}$ be our semi-classical parameter. 
We will construct an operator partition of unity ($J\in\nn$ is independent of $h$)
\[{\rm Id}=(1-\phi(h^2\Delta))+
\sum_{i=1}^J Q_j(h) , \quad \textrm{ with }Q_j(h)\in \Psi^{-\infty,0,0}(M)\] 
for which there is a dichotomy: either $Q_i(h)$ and $Q_j(h)$ have close support and
such that \eqref{kernelest} holds, or they are such that $Q_i(h)$ is either not outgoing-related or not incoming-related to $Q_j(h)$. 
In Proposition~\ref{microlocaltrivial}, we will show that $Q_i(h) (h^2\Delta-(\beta \pm i\gamma))^{-1} Q_j(h)$ ($+$ for not outgoing-related, $-$ for not incoming-related) is a trivial operator for 
 $|\gamma|\leq \eta$ and $|\beta-1|\leq \delta$.    \\

\noindent\textbf{Partition in phase space.} We first need the following  
\begin{lemma}\ilabel{partitioninx>eps}
Let $N\in \nn$ be large and $\eps>0,\delta>0$ be small with $\delta<1/N$. 
Let us cover $[-(1+\delta),1+\delta]$ by $N$ closed intervals $B_i$ 
of same size $3(1+\delta)/N$ and each $B_i$ intersects only $B_{i-1}$ and $B_{i+1}$, then 
we cover $p^{-1}([1-\delta,1+\delta])\cap \{x<\eps\}$ by $\cup_{i=1}^{N}\til{B}_i$ with 
$\til{B}_i:=p^{-1}(]1-\delta,1+\delta[)\cap \{x<\eps, \nu\in B_i\}$. 
If $N$ is large enough and $\delta,\eps>0$ are small enough, then \\
1) when ${\rm dist}(B_i,B_j)>0$ we have 
\[\textrm{ either }\forall t\geq 0,\,\, g^t(\til{B}_i)\cap \til{B}_j=\emptyset 
\textrm{ or }\forall t\leq 0,\,\, g^t(\til{B}_i)\cap \til{B}_j=\emptyset.\]
2) for any small open set $U$ in ${^\SC T}^*\bbar{M}\cap \{x\geq 2\eps\}$, then for all $i\leq N$
\[\textrm{ either }\forall t\geq 0,\,\, g^t(U)\cap \til{B}_i=\emptyset 
\textrm{ or }\forall t\leq 0,\,\, g^{t}(U)\cap \til{B}_i=\emptyset.\]
\end{lemma}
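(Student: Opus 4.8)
The plan is to exploit the monotonicity property $\dot\nu(t)>0$ along geodesics near $x=0$ (valid as long as $|\nu(t)|<1$), established just above via the structure $H_p = x\hat H_p + x^2 V$ and the explicit form \eqref{hatHp} of $\hat H_p$. The key observation is that on the energy shell $\{|p-1|\le\delta\}\cap\{x<\eps\}$ we have $|\mu|^2_{h_x} = p - \nu^2$ bounded away from $1$ in absolute value when $|\nu|$ is near $1$ (since $|\nu|\le 1+\delta$ forces $|\mu|^2$ small only when $\nu^2$ is near $p\approx 1$), so that the $\nu$-component is \emph{strictly increasing} along the flow with a rate that is bounded below in terms of $x$ and away from the turning region. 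More precisely, from \eqref{floweq}, $\dot\nu = 2x|\mu|^2_{h_x} - x^2\partial_x|\mu|^2_{h_x}$, and for $\eps$ small this is $\ge c\, x\, |\mu|^2_{h_x} \ge 0$, with the first factor $x>0$ on the region of interest. The upshot is that as long as a trajectory stays in $\{x<\eps\}$, its $\nu$-coordinate can never return to a value it has already passed (strict monotonicity), hence it can visit each slab $\til B_i$ only within a single time-interval, and the ordering of the slabs it visits is forced by the sign of $\dot\nu$, i.e.\ by increasing $\nu$.

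First I would make precise the choice of constants: fix $\eps>0$ so small that on $\{x<\eps,\ |p-1|\le\delta\}$ one has $\dot\nu\ge 0$ whenever $|\nu|\le 1+\delta$, and moreover $\dot\nu=0$ only where $|\mu|^2_{h_x}=0$, which on the energy shell forces $\nu^2 = p \in [1-\delta,1+\delta]$, i.e.\ $\nu$ near $\pm 1$. Then choose $N$ large (so the slabs $B_i$ are thin, of width $3(1+\delta)/N<3/N^2$ say, using $\delta<1/N$) and partition $[-(1+\delta),1+\delta]$ as stated. For part 1), suppose $\mathrm{dist}(B_i,B_j)>0$, say $B_j$ lies entirely to the \emph{right} of $B_i$ in the $\nu$-variable (the symmetric case is identical with time reversed). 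I claim then that $g^t(\til B_i)\cap\til B_j=\emptyset$ for all $t\le 0$: if $(x_0,y_0,\nu_0,\mu_0)\in\til B_i$ and $g^{-s}$ of this point, $s>0$, landed in $\til B_j$, then along the reversed trajectory from time $-s$ to $0$ the $\nu$-coordinate would have to \emph{decrease} from a value in $B_j$ to a value in $B_i$; but the trajectory (run forward from time $-s$ to $0$) lies in $\{x<\eps\}$ at both endpoints — here I need a small argument that it stays in $\{x<\eps\}$ throughout, using that $\dot x = -2x^2\nu$ so $x$ is monotone in the region where $\nu$ has a fixed sign, and by shrinking $\eps$ further and the slab width, the trajectory cannot exit and re-enter; I would phrase this via an escape/return lemma for the region $\{x<\eps\}$ — and on that region $\nu$ is nondecreasing, contradiction. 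So only the $t\ge0$ alternative can fail, giving the dichotomy. There is one borderline case to handle: if both $B_i$ and $B_j$ straddle the turning zones (near $\nu=\pm1$) where $\dot\nu$ can vanish; here I use that at a point with $\dot\nu=0$ on the shell we have $\mu=0$, and by the last sentence before the lemma $\hat H_p$ vanishes there, and a standard argument (e.g.\ the trajectory through such a point is either stationary at fiber-infinity behavior or immediately leaves with $\dot\nu>0$) shows $\nu$ is still strictly monotone in $t$ away from isolated critical times, which suffices since $B_i,B_j$ are separated.

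For part 2), let $U$ be a small open set in $\{x\ge2\eps\}$. The point is that any geodesic segment joining $U$ to a point of $\til B_i\subset\{x<\eps\}$ must cross the ``shell'' $\{x=\eps\}$ (or more conveniently $\{x=\tfrac32\eps\}$, say); by continuity and compactness, for $U$ small enough all forward (resp.\ backward) geodesics from $U$ that ever reach $\{x<\eps\}$ first cross $\{x=\tfrac32\eps\}$ with $\nu$-coordinate of a definite sign — indeed, to have $x$ decreasing we need $\dot x = -2x^2\nu<0$, i.e.\ $\nu>0$, so the crossing happens with $\nu>0$ and, after shrinking $\eps$, with $\nu$ close to some fixed value $\ge c_0>0$ along the ensuing trajectory in $\{x<\eps\}$; symmetrically backward geodesics enter with $\nu<0$. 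Once inside $\{x<\eps\}$, monotonicity of $\nu$ (part of the argument above) pins down which slabs can be hit: a forward trajectory entering with $\nu>0$ has $\nu$ increasing thereafter, so it can only meet slabs $\til B_i$ with $B_i$ to the right of the entry value; taking $U$ small enough that \emph{all} its forward geodesics entering $\{x<\eps\}$ do so with $\nu$ bounded below by, say, the right endpoint of $B_{i}$ is not possible uniformly in $i$, so instead the correct statement is: for each fixed $i$, either no forward geodesic from $U$ meets $\til B_i$, or no backward one does — and this holds because a forward geodesic reaching $\til B_i$ must have entered $\{x<\eps\}$ with $\nu<$ (the $\nu$-values in $B_i$) while a backward geodesic reaching $\til B_i$ must have entered going the other way, and these two cannot both occur for the same point of (a sufficiently small) $U$ on the same energy shell. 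I would make this precise by the same monotonicity bookkeeping as in part 1), treating the entry point into $\{x<\eps\}$ as a substitute for one of the slabs.

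The main obstacle, I expect, is the \emph{confinement} issue: controlling that a trajectory which is in $\{x<\eps\}$ at two times $t_1<t_2$ actually stays in $\{x<\eps\}$ — or more honestly, stays in a slightly larger region where $\dot\nu\ge0$ — for all $t\in[t_1,t_2]$, so that the monotonicity of $\nu$ can be applied on the whole interval rather than piecewise. This requires a quantitative escape lemma near $x=0$: because $\dot x = -2x^2\nu$ and, on the energy shell with $x$ small, $\nu$ cannot change sign without $x$ first growing (by the coupled monotonicities of $x$ and $\nu$), one shows that once $x<\eps$ and the trajectory is ``heading out'' ($\nu<0$) it exits monotonically, and once ``heading in'' ($\nu>0$) it first decreases $x$ to a minimum (a turning point, where $\nu=0$, $|\mu|^2=p$) and then exits, with $\nu$ increasing throughout — so each trajectory meets $\{x<\eps\}$ in a single time interval with at most one turning point, and $\nu$ is monotone increasing across it. Pinning down the precise smallness of $\eps,\delta$ and the slab width $3(1+\delta)/N$ needed to make all these ``close enough to the model $\hat H_p$'' statements rigorous (absorbing the $x^2V$ error term) is the routine but delicate part; I would do it by a Gronwall-type estimate comparing $g^t$ to the flow of $\hat H_p$ on the compact piece of the energy shell over $\{x\le\eps\}$.
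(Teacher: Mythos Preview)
Your overall strategy --- exploit monotonicity of $\nu$ along the flow in the region of small $x$ --- is exactly right and is what the paper uses. But there is a genuine gap at precisely the point you flag as ``the main obstacle'': your proposed resolution, the claim that ``each trajectory meets $\{x<\eps\}$ in a single time interval with at most one turning point,'' is \emph{false}, and no amount of shrinking $\eps,\delta,N$ will make it true.

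Take any geodesic that passes through the compact region $\{x\geq 2\eps\}$. As $t\to-\infty$ it comes in with $\nu\to -1$, $x\to 0$; as $t\to+\infty$ it goes out with $\nu\to +1$, $x\to 0$. Such a geodesic meets $\{x<\eps\}$ in \emph{two} components, one with $\nu<0$ (incoming) and one with $\nu>0$ (outgoing), separated by a sojourn in the compact set. Shrinking $\eps$ only pushes the two components farther apart in time. Your description of the local dynamics is also inverted: if $\nu>0$ then $\dot x=-2x^2\nu<0$, so $x$ decreases monotonically to $0$ with $\nu\to 1$ and there is no turning point; if $\nu<0$ then $x$ increases, and the turning point $\nu=0$ (when reached before $x=\eps$) is a \emph{maximum} of $x$, not a minimum.

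The paper does not try to confine the trajectory; it tracks what happens at each crossing of $\{x=\eps\}$ (more precisely $\{x=3\eps/2\}$). For part 1), taking $\inf B_i>\sup B_j$ and a forward trajectory from $\til B_i$: either it stays in $\{x\leq 3\eps/2\}$ for all $t\geq 0$ (then $\nu$ is nondecreasing, so never drops into $B_j$), or it exits. At exit $\dot x>0$ forces $\nu<0$, hence $\nu(0)<0$, hence $\sup B_j<\inf B_i\leq \nu(0)<0$, so $B_j$ is entirely negative; upon re-entry $\dot x<0$ forces $\nu\geq 0>\sup B_j$, and $\nu$ stays nonnegative thereafter. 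So the trajectory avoids $\til B_j$ before the excursion (since $\nu\geq\nu(0)>\sup B_j$), during it ($x>\eps$), and after ($\nu\geq 0$). The excursion is not eliminated but handled by reading off the sign of $\nu$ at each boundary crossing.

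For part 2) the paper argues from $\til B_i$ rather than from $U$: if $\inf B_i>0$ (resp.\ $\sup B_i<0$), trajectories from $\til B_i$ have $x$ monotone and stay in $\{x\leq\eps\}$ for $t\geq 0$ (resp.\ $t\leq 0$), never reaching $U$. The case $0\in B_i$ needs a separate quantitative fact --- that a trajectory with $x(0)=\eps$ and $|\nu(0)|<1/8$ stays in $\{x\leq 3\eps/2\}$ for \emph{all} time --- which then shows such trajectories miss $U\subset\{x\geq 2\eps\}$ in both directions. This fact is what your part 2) argument implicitly needs when you assert that entry into $\{x<\eps\}$ from $U$ occurs with $\nu$ ``close to some fixed value $\geq c_0>0$''; in reality the entry $\nu$ is not near any fixed value, but it \emph{is} bounded away from $0$, and proving that bound amounts to proving exactly this fact.
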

\begin{proof}

We start with a couple of facts about the flow near $\pl\bbar{M}$.
First, from \eqref{floweq}, there exists $\eps>0$ small so that as long as $x(t)\leq 2\eps$, one has  
\begin{equation}
\dot{\nu}(t)\geq (2-C\eps)x(t)|\mu|_{h_x}^2\geq 0.
\ilabel{nudot}\end{equation}
Also, the sign of $\dot x$ is the same as that of $-\nu$. Therefore, if $x(0) \leq 2\eps$ and $\nu(0) \geq 0$ then $x(t) \leq 2\eps$ for all $t \geq 0$, while if $x(0) \leq 2\eps$ and $\nu(0) \leq 0$ then $x(t) \leq 2\eps$ for all $t \leq 0$. Equivalently, the compact set $x\geq 2\eps$ is geodesically convex.

Second, suppose that $x(0) = \epsilon$ and $-\delta\leq \nu(0)\leq 0$ for some $0 < \delta < 1/8$. As above, $x(t) \leq \epsilon$ for all negative $t$. Now we show that $x(t)$ will be no larger than $3\eps/2$ for all positive $t$, provided that $\epsilon$ is sufficiently small (depending only on $(M,g)$). To see this, let $t_0>0$ be the time 
defined by $0=\nu(t_0)\geq \nu(t)$ for all $t\in(0,t_0)$; such a time exists by the non-trapping assumption which means that $\nu(t) \to 1$ as $t \to \infty$. For $t\in [0,t_0]$ one has
$|\mu(t)|^2 \geq1-\delta$.
Also, on the interval $[0, t_0]$, $x$ is nondecreasing, so  $x(t) \geq \eps$ for $t \in [0, t_0]$, while on the other hand 
$|\dot{x}/x^2| = |d/dt(x^{-1})| \leq 2$, implying that $x(t) \leq 2\eps$ 
for $t \leq 1/4\eps$.  Now using the equation for $\dot \nu$ we see that, for sufficiently small $\epsilon$, $\dot{\nu}(t)\geq 2\eps(1-\delta-C\eps) > \eps$ for $0<t<\min(1/4\eps,t_0)$. For  $\delta < 1/8$ this means that $\nu$ becomes nonnegative within time $\delta/\epsilon < 1/8\eps$, i.e. $t_0 < \delta/\eps$. This implies that $x(t_0) < 3\eps/2$ for $\delta < 1/8$. Now we have $x(t_0) < 3\epsilon/2$ and $\nu(t_0) = 0$ we see from the paragraph above that $x(t) < 3\eps/2$ for all times $t \in \RR$. A similar argument shows that if $x(0) = \epsilon$ and $0 \leq \nu(0) \leq \delta$ for some $0 < \delta < 1/8$, then $x(t) < 3\eps/2$ for all times $t \in \RR$.\\

Let us now show 1). Without loss of generality assume that $\inf(B_i)>\sup(B_j)$. Now consider a geodesic $\gamma$ starting at  $\gamma(0) \in \til{B}_i$. If $\gamma(t)$ stays entirely within $x \leq 3\epsilon/2$ for $t \geq 0$, then $\nu(t)$ is nondecreasing along $\gamma$ for $ t\geq 0$ and it follows that $\gamma(t)$ is  disjoint from $\til{B}_j$ for $t \geq 0$. On the other hand, if $\gamma(t)$ reaches $x > 3\epsilon/2$ for some $t_2 > 0$, then it follows that $\nu(0) < 0$, so $\inf(B_i) < 0$. Let $(t_1, t_3)$ be the maximal open interval containing $t_2$ on which $x(t) > 3\eps/2$. Then we have $\nu(t)\leq 0$ for $t \leq t_1$ and $\nu(t) \geq 0$ for $t \geq t_3$ since $\nu$ is nondecreasing whenever $x \leq 2\eps$. It follows that $\gamma(t)$ is disjoint from $\til{B}_j$ for all $t \geq 0$: for $t \leq t_1$ since $\nu \geq \nu(0) > \sup {B}_j$, on $[t_1, t_3]$ since $x \geq 3\eps/2$ and on $t \geq t_3$ since $\nu \geq 0 \geq \nu(0) > \sup {B}_j$. \\

Showing 2) is  similar. Suppose that $\inf B_i > 0$. Then a trajectory $\gamma$ with $\gamma(0) \in \til{B}_i$ stays in $x \leq \epsilon$ for all $t \geq 0$ and hence is disjoint from $U$. Similarly if $\sup B_j < 0$, trajectories starting in $\til{B}_i$ stay in 
$x \leq \epsilon$ for all $t \leq 0$. Now consider the case that $0 \in B_i$. Then provided that $\delta < 1/8$ and $\epsilon$ is sufficiently small, the second fact above shows that $x(t) \leq 3\epsilon/2$ for all time, showing that trajectories starting in $\til{B}_i$ are disjoint from $U$ for all $t \in \RR$.
\end{proof}

In a second Lemma, we complete the adapted covering by covering the region $\{x\geq \eps\}$.
\begin{lemma}\ilabel{Dj}
Let $\iota>0$ be sufficiently small, in particular smaller than half of the injectivity radius of $(M,g)$, and $\delta>0$ be small. 
There exist  open sets $D_j\subset \{x> \eps/2\}\cap p^{-1}(]1-\delta,1+\delta[)$ for $j=1,\dots, N'$ 
such that $\{x\geq \eps\}\cap p^{-1}(]1-\delta,1+\delta[)\subset \cup_{j}D_j$ and satisfying the following properties:\\
1) If $D_j\cap D_k\not=\emptyset$,  then all geodesics going from a point $m\in D_j$ 
to a point $m'\in D_k$ have length less than $\iota$.\\
2) If $(j,k)$ are such that ${\rm dist}(D_j, D_k)>\iota$, then either $g^t(D_j)\cap D_k=\emptyset$ for all 
$t<\iota/2$ or  $g^t(D_j)\cap D_k =\emptyset$ for all $ t>-\iota/2$.\\
3) If $D_j\subset \{x< 2\eps\}$, then there exist $i\leq N$ such that 
$D_j\subset \hat{B}_i$ where $\hat{B}_i:=p^{-1}(]1-\delta,1+\delta[)\cap \{x<2\eps, \nu\in B_i\}$. 
\end{lemma}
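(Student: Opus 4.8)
\textbf{Proof plan for Lemma \ref{Dj}.}

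The plan is to build the sets $D_j$ as small geodesically-convex-type neighbourhoods obtained by a finite covering argument, and then use compactness together with the non-trapping flow structure to extract the three stated properties. First I would fix $\iota>0$ smaller than half the injectivity radius and smaller than $\eps/4$, and consider the compact set $\Sigma := \{x\geq \eps/2\}\cap p^{-1}([1-\delta',1+\delta'])$ for a slightly larger $\delta'>\delta$. For each point $m\in\Sigma$ choose a small open neighbourhood $D(m)$ inside $\{x>\eps/2\}\cap p^{-1}(]1-\delta,1+\delta[)$ of diameter much less than $\iota/10$ (measured in some fixed auxiliary metric on ${}^{\SC}T^*\bbar M$, and with base projection of Riemannian diameter $<\iota/10$); by compactness finitely many of these, call them $D_1,\dots,D_{N'}$, cover $\{x\geq\eps\}\cap p^{-1}(]1-\delta,1+\delta[)$. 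This construction immediately gives property 1): if $D_j\cap D_k\neq\emptyset$ then any two points $m\in D_j$, $m'\in D_k$ are within Riemannian distance $<\iota/2$ of each other (via a point in the intersection), hence — since $\iota$ is below the injectivity radius — joined by a unique minimizing geodesic of length $<\iota$, and there are no other geodesics of length $\leq\iota$ between them because such a geodesic would have to leave and re-enter a ball of radius $\iota$, contradicting minimality of the injectivity radius; here one should also use that the $D_j$ lie in $\{x>\eps/2\}$ so that short geodesics between them stay in the region $\{x\geq\eps/2\}$ where the metric is uniformly controlled. For property 3) I would simply, when shrinking the $D(m)$, also demand that any $D(m)$ meeting $\{x<2\eps\}$ be chosen small enough in the $\nu$-variable that it sits inside one of the slabs $\hat B_i$; this is possible since the $B_i$ have fixed size $3(1+\delta)/N>0$ and we are free to take the $D(m)$ as small as we wish.

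The substance is property 2). Fix a pair $(j,k)$ with $\operatorname{dist}(D_j,D_k)>\iota$. I claim that the set $T_{jk}:=\{\,t\in\RR : g^t(D_j)\cap D_k\neq\emptyset\,\}$ cannot contain points of both signs that are close to $0$; more precisely that $T_{jk}\cap(-\iota/2,\iota/2)$ is empty, from which the dichotomy in the statement (in the much weaker form with thresholds $\iota/2$) follows. Indeed, if $0\leq t<\iota/2$ and $g^t(m)=m'$ with $m\in D_j$, $m'\in D_k$, then the geodesic segment from the base point of $m$ to the base point of $m'$ has length at most $t<\iota/2$ (the energy is $\approx 1$, so arclength is comparable to time — one should pin down the normalization so that on $p^{-1}(\{1\})$ the Hamiltonian time equals arclength, possibly up to a factor $2$ coming from the symbol $p=|\xi|_g^2$, and shrink $\iota$ accordingly), and this contradicts $\operatorname{dist}(D_j,D_k)>\iota$. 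The same argument rules out $t\in(-\iota/2,0]$. Hence $T_{jk}\cap(-\iota/2,\iota/2)=\emptyset$, which in particular gives: either $g^t(D_j)\cap D_k=\emptyset$ for all $t<\iota/2$, or $g^t(D_j)\cap D_k=\emptyset$ for all $t>-\iota/2$ — indeed if the first failed there is $t_1\geq\iota/2$ with nonempty intersection, but we only need to exhibit the alternative, and the alternative "for all $t>-\iota/2$" holds precisely when $T_{jk}\subset[\iota/2,\infty)$, while "for all $t<\iota/2$" holds when $T_{jk}\subset(-\infty,-\iota/2]$; if $T_{jk}$ met both $[\iota/2,\infty)$ and $(-\infty,-\iota/2]$ we would need a separating argument.

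This last point is the genuine obstacle and is where I expect to spend the most care: ruling out that a pair $D_j,D_k$ is simultaneously connected in forward time ($t\geq\iota/2$) and in backward time ($t\leq-\iota/2$). This cannot be done by a purely local distance estimate; it is exactly the role of Lemma \ref{partitioninx>eps} and of the non-trapping hypothesis. The strategy is: for $D_j$ (resp.\ $D_k$) contained in $\{x<2\eps\}$, property 3) places it in a slab $\hat B_i$, and there the monotonicity $\dot\nu\geq0$ from \eqref{nudot} together with the convexity of $\{x\geq2\eps\}$ forces the flowout to move monotonically in $\nu$, so the forward and backward flowouts are separated by the sign of $\nu$ — this is precisely the mechanism in the proof of part 1) of Lemma \ref{partitioninx>eps}. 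For $D_j$ in the compact region $\{x\geq 2\eps/ ?\}$ one argues differently: shrink $\eps$ and work on the compact energy shell; the non-trapping assumption says every geodesic escapes to $x=0$ in finite time in both directions, so there is a uniform $T=T(\eps,\delta)$ such that any geodesic segment joining the compact set $\{x\geq\eps/2\}\cap p^{-1}([1-\delta,1+\delta])$ to itself and staying in that set has length $\leq T$; then by taking the $D(m)$ fine enough (a Lebesgue-number argument on the compact set of such segments) one separates, for each pair, the set of connecting times into a bounded interval, and a further refinement — using that distinct geodesics through nearby phase-space points diverge at a controlled rate away from conjugate points, or more simply a direct compactness/contradiction argument letting the $D(m)$ shrink to points — yields the clean dichotomy at threshold $\iota/2$. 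I would carry this out by contradiction: if no finite covering worked, one would get sequences $m_\ell\in D^{(\ell)}_j$, $m'_\ell\in D^{(\ell)}_k$ with connecting times $t_\ell\geq\iota/2$ and $s_\ell\leq-\iota/2$ and with the diameters of $D^{(\ell)}_j,D^{(\ell)}_k$ tending to $0$; passing to the limit (the relevant time intervals are bounded by the non-trapping $T$) produces a single pair of points $m_\infty,m'_\infty$ joined by two geodesics, one of length $t_\infty\geq\iota/2$ and one of length $|s_\infty|\geq\iota/2$ in opposite time directions, with $\operatorname{dist}\geq\iota$ — and then choosing $D_j,D_k$ in the original covering to be neighbourhoods of $m_\infty,m'_\infty$ of the appropriate (small but fixed) size handles exactly this pair, with the remaining pairs handled because they are finite in number. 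Assembling these cases gives property 2) with the stated thresholds, completing the construction.
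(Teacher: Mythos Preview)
Your overall strategy --- cover by small balls, get properties 1) and 3) by taking the radius small, and prove 2) by a compactness/contradiction argument using the non-trapping hypothesis to bound the connecting times --- is exactly the paper's approach. But you do not actually close the contradiction for property 2), and your final paragraph shows the gap.

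Here is the point you are missing. In the contradiction argument you correctly arrive (after shrinking the cover and passing to the limit, with times bounded by the uniform exit time $T$) at phase-space points $m_\infty, m'_\infty \in S^*M$ and times $t_\infty\geq\iota/2$, $s_\infty\leq -\iota/2$ with $g^{t_\infty}(m_\infty)=m'_\infty$ and $g^{s_\infty}(m_\infty)=m'_\infty$. That configuration \emph{is} the contradiction: it gives $g^{t_\infty-s_\infty}(m_\infty)=m_\infty$ with $t_\infty-s_\infty\geq\iota>0$, so $m_\infty$ lies on a closed geodesic, which is impossible since $(M,g)$ is non-trapping. Your sentence ``choosing $D_j,D_k$ in the original covering to be neighbourhoods of $m_\infty,m'_\infty$ of the appropriate size handles exactly this pair'' is headed in the wrong direction --- there is nothing to ``handle''; the argument terminates the moment you recognise the periodic orbit. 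I suspect you slipped into thinking of $m_\infty,m'_\infty$ as base points joined by two distinct geodesics (which is indeed harmless), but they are points in $S^*M$ and $g^t$ is the flow, so two connecting times force periodicity.

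Two smaller comments. Your case splitting for $D_j\subset\{x<2\eps\}$ using the $\nu$-monotonicity from Lemma~\ref{partitioninx>eps} is not needed here; the periodic-orbit contradiction handles all pairs uniformly. And for property 1), note that $m,m'$ are phase-space points, so ``geodesics from $m$ to $m'$'' means times $t$ with $g^t(m)=m'$; your injectivity-radius discussion of base geodesics is more than is required --- taking the ball radius $r<\iota/4$ (as the paper does) already gives diameter $<\iota$, and non-trapping guarantees at most one connecting time.
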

\begin{proof}
We first cover the set $S^*M\cap\{x\geq \eps/2\}$ by balls $D_j$ of radius $r>0$ 
where $r$ is chosen smaller than $\iota/4$, and we thicken $D_j$ homogeneously in $|\xi|$ to make open sets in 
$\{||\xi|_g-1|<\delta ,x\geq \eps/2\}\subset T^*M$,  implying 1) for $\delta$ small. 
Such an $r$ exists by compactness. 
By taking $r$ small enough depending only on $N$, it is also clear that 3) can be obtained. 
Using the non-trapping assumption, we will prove that 2) is satisfied if $r$ is chosen small enough. Taking $\delta>0$  small, 
it suffices to consider the flow on $S^*M$.
First, the region $\{x\leq \eps/2\}\cap S^*M$ is geodesically convex if $\eps>0$ is small enough, then since $g$ is non-trapping, 
we can define a function $z\to t(z)$ on $\{x> \eps/2\}\subset S^*M$ 
which to a point $z$ assigns the time $t(z)>0$ such that 
$x(g^{t(z)}(z))=\eps/2$, and this function is continuous ($t(z)$ is obtained by applying the implicit function theorem  to 
the function $(z,t)\to x(g^t(z))$), which implies that for any compact set $K\subset S^*M\cap \{x\leq \eps/2\}$, there exists  
$T_+>0$ such that $g^t(K)\in \{x<\eps/2\}\cap S^*M$  for all $t>T_+$. Similarly there is a $T_-<0$ so  that for all $t<T_-$, 
$g^t(K)\in \{x<\eps/2\}\cap S^*M$.
Now if such covering $D_j$ does not exist, 
we can construct, using compactness, two sequences of points $z_n,z_n'\in  \{x> \eps/2\}$ 
both converging to the same point $z\in  \{x> \eps/2\}\cap S^*M$, and two sequences of times 
$t_n\leq -\iota/2, t_n'\geq \iota/2$ such that $\lim_{n\to \infty} g^{t_n}(z_n)=\lim_{n\to \infty} g^{t'_n}(z'_n)$. 
Moreover, these times are necessarily bounded by the existence of $T_\pm$ if $K$ is chosen to be a small ball centered at $z$.
Passing to a subsequence, $t_n$ and $t'_n$ have an accumulation point $t\leq -\iota/2$, resp. $t'\geq \iota/2$, 
then we have $g^t(z)=g^{t'}(z)$, which implies that $z$ is on a periodic geodesic, contradicting the non-trapping assumption. 
\end{proof}

\noindent\textbf{High energy partition of unity} We first note that by \cite[Section 2]{VaZw}, the function $\phi(h^2 \Delta)$ is a semiclassical pseudodifferential operator with symbol $\phi(|\xi|^2)$. We now choose  smooth functions 
$q_i$ for $i=1,\dots,N+N'$ on ${^\SC T}^*\bbar{M}$ such that 
\begin{itemize}
\item $q_i \in C_0^\infty(\tilde B_i), \quad i = 1 \dots N$;
\item $q_{N+j} \in C_0^\infty(D_j), \quad j = 1 \dots N'$;
\item $\sum_{i=1}^{N+N'}q_i=1$  on $\{(m,\xi); |\xi|^2\in \supp \,  \phi\}$.
\end{itemize}
Now we define the operators 
\begin{equation}\ilabel{defQ_j}
Q_i(h)={\rm Op}_h(q_i) \phi(h^2 \Delta), \,\, i=1,\dots,N+N',  
\end{equation}
where ${\rm Op}_h$ is a semiclassical quantization as explained before. By construction, we 
have 
\begin{equation}\label{partitionofId}
\sum_{i}Q_i(h)-\phi(h^2\Delta) = R(h) \in \Psi^{-\infty,\infty,-\infty}(M).
\end{equation}
Let us redefine $Q_1(h)$ to be $Q_1(h) + R(h)$, which does not change any microlocal properties of $Q_1(h)$. Then $(Q_i(h), (1 - \phi)(h^2 \Delta))$ is a partition of unity. 
Let us also observe that as the $Q_i(h)$ are uniformly bounded as operators $L^2 \to L^2$, and as they are Calder\'on-Zygmund operators in a uniform sense as $h \to 0$, then they are uniformly bounded as operators $L^p \to L^p$ for $1 < p < \infty$.
We shall frequently use the notation $Q_i$ below instead of $Q_i(h)$.
Since $\WF'(Q_i)\subset \til{B}_i$ and  $\WF'(Q_{i+N})\subset D_i$ for $i\leq N$,
in view of  Lemma \ref{partitioninx>eps} we have the following
 
\begin{lemma}\label{WFprop}
Let $\eps,\iota,N,N'$ and $B_i$ defined in Proposition \ref{partitioninx>eps}. For $1 \leq j,k\ \leq N+N'$,  one of the following alternatives is satisfied:\\
1) either $g^t(\WF'(Q_i))\cap \WF'(Q_j)=\emptyset$ for all $t\geq 0$\\
2) or $g^t(\WF'(Q_i))\cap \WF'(Q_j)=\emptyset$ for all $t\leq 0$\\
3) or $\WF'(Q_i)\cap \WF'(Q_j)\not= \emptyset$ with $\WF'(Q_i)\subset \{x<2\eps, \nu\in B_k\}$, $\WF'(Q_j)\subset \{x<2\eps, \nu\in B_{k'}\}$ for some $k,k'$ with $|k-k'|\leq 1$\\
4) or $\WF'(Q_i)\cap \WF'(Q_{j})\not= \emptyset$, $\WF'(Q_i)\cup\WF'(Q_{j})\subset \{x>\eps\}$ 
and ${\rm dist}(\WF'(Q_i),\WF'(Q_j))<\iota$.

Moreover, all but one of the $Q_i$ have operator wavefront set $\WF'(Q_i)$ disjoint from the outgoing radial set $\{ x = 0, \mu = 0, \nu = +1 \}$, and all but one of the $Q_i$ have operator wavefront set $\WF'(Q_i)$ disjoint from the incoming radial set $\{ x = 0, \mu = 0, \nu = -1 \}$. 
\end{lemma}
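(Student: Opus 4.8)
The plan is to verify the four-way dichotomy by tracing how the covering sets $\til B_i$ and $D_j$ were constructed in Lemmas~\ref{partitioninx>eps} and \ref{Dj}, and then to handle the extra claim about radial sets separately by a direct count. First I would record that, since $Q_i(h) = \mathrm{Op}_h(q_i)\phi(h^2\Delta)$ with $q_i \in C_0^\infty(\til B_i)$ for $i \leq N$ and $q_{N+j} \in C_0^\infty(D_j)$ for $j \leq N'$, and since $\phi(h^2\Delta)$ has symbol $\phi(|\xi|^2)$, we have $\WF'(Q_i) \subset \til B_i \cap p^{-1}(\supp\phi)$ and $\WF'(Q_{N+j}) \subset D_j \cap p^{-1}(\supp\phi)$ by \eqref{WFsets}; after the redefinition $Q_1 \mapsto Q_1 + R(h)$ the residual term $R(h) \in \Psi^{-\infty,\infty,-\infty}(M)$ has empty wavefront set, so this inclusion is preserved. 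Thus it suffices to prove the dichotomy for the sets $\til B_i, D_j$ themselves.

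Next I would split into cases according to whether the two relevant covering sets are both of $\til B$-type, both of $D$-type, or mixed. If both are $\til B_i, \til B_j$: either $\mathrm{dist}(B_i,B_j)>0$, in which case Lemma~\ref{partitioninx>eps}(1) gives alternatives 1) or 2) directly; or $B_i \cap B_j \neq \emptyset$, so $|i-j|\leq 1$ (recall each $B_i$ only meets its neighbours), and since $\til B_i, \til B_j \subset \{x < \eps\} \subset \{x < 2\eps, \nu \in B_i\}$ respectively, alternative 3) holds (possibly with $\WF'(Q_i)\cap\WF'(Q_j)$ empty, in which case one checks 1) or 2) trivially). If both are $D_j, D_k$: either $D_j \cap D_k \neq \emptyset$, and by Lemma~\ref{Dj}(1) all geodesics between them have length $<\iota$, so in particular $\mathrm{dist}(D_j,D_k)<\iota$ and $\WF'(Q_i)\cup\WF'(Q_j)\subset\{x>\eps/2\}$ — one then argues that either these wavefront sets lie in $\{x>\eps\}$ giving alternative 4), or they dip into $\{x<2\eps\}$ in which case Lemma~\ref{Dj}(3) places them inside some $\hat B_k, \hat B_{k'}$ and one must check the $\nu$-indices are adjacent; or $\mathrm{dist}(D_j,D_k)>\iota$, and Lemma~\ref{Dj}(2) gives 1) or 2). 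For the mixed case $\til B_i, D_j$: if $D_j \subset \{x<2\eps\}$ then by Lemma~\ref{Dj}(3) $D_j$ sits in some $\hat B_k$, and one compares with $\til B_i \subset \{\nu \in B_i\}$; if instead $D_j$ meets $\{x \geq 2\eps\}$, then Lemma~\ref{partitioninx>eps}(2) (applied to a small neighbourhood $U$ of a point of $D_j$ in $\{x\geq 2\eps\}$, together with the convexity of $\{x\geq 2\eps\}$ and monotonicity of $\nu$ near the boundary) yields that the flowout meets $\til B_i$ only in one time direction, giving 1) or 2). A minor technical point to handle carefully is that the covering sets are open and the lemmas are phrased for them, so one should intersect with $p^{-1}(\supp\phi)$ and note this only shrinks things; also one should pass to the closures $\overline{\WF'(Q_i)}$ where needed, which are compact in the relevant faces.

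For the final two sentences about radial sets, I would argue as follows. The outgoing radial point $\rho_+ = \{x=0,\mu=0,\nu=+1\}$ lies in $\til B_{i_0}$ only for the index $i_0$ with $+1 \in B_{i_0}$, because the $\til B_i$ partition the range of $\nu$ via the $B_i$ and $\rho_+$ has $\nu = +1$; moreover $\rho_+$ cannot lie in any $D_j$ since $D_j \subset \{x > \eps/2\}$ while $\rho_+$ has $x=0$. Hence only $Q_{i_0}$ can have $\WF'(Q_{i_0})$ meeting $\rho_+$, so all but one of the $Q_i$ are disjoint from it; symmetrically for the incoming radial point $\rho_- = \{x=0,\mu=0,\nu=-1\}$ and the unique index with $-1 \in B_i$. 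The main obstacle I anticipate is the bookkeeping in the mixed and $D$-$D$ cases near $x \approx 2\eps$: one must be sure that the geometric facts proved inside the two preliminary lemmas (the geodesic convexity of $\{x \geq 2\eps\}$, the monotonicity $\dot\nu \geq 0$ for $x \leq 2\eps$, and the $3\eps/2$ containment estimate) really do force the $\nu$-index comparison in alternative 3) when a $D_j$ straddles the collar region, rather than this being an artifact of how the covers were chosen; this is exactly the content the proof should make explicit by invoking Lemma~\ref{partitioninx>eps}(1)--(2) and Lemma~\ref{Dj}(3) in combination. Everything else is a routine unwinding of definitions.
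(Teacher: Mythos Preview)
Your proposal is correct and takes exactly the same approach as the paper, which gives essentially no proof beyond the sentence ``Since $\WF'(Q_i)\subset \til{B}_i$ and $\WF'(Q_{i+N})\subset D_i$ for $i\leq N$, in view of Lemma~\ref{partitioninx>eps} we have the following''; your case analysis is a faithful expansion of what that sentence leaves implicit. The bookkeeping issues you flag near $x\approx 2\eps$ and the possibility that two adjacent $B_i$ both contain $\nu=\pm 1$ are genuine minor imprecisions already present in the paper's formulation, not defects of your argument.
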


In the first case, one has $Q_j$ not outgoing related to $Q_i$, in the second case $Q_j$ is not incoming related to $Q_i$, while in the last two cases the microsupports are close one to each other.\\ 

\noindent\textbf{Low energy partition of unity} For a low energy partition of unity, we can effectively use the low energy partition of unity from \cite[Section 6]{GHS2}. For the reader's convenience we describe this here. 

We first remark that this partition of unity lives in the calculus of pseudodifferential operators $\Psi^0_k(M, \Omega_{k,b}^{1/2})$ defined in \cite{GH}, \cite{GHS1}. This calculus provides a way of defining pseudodifferential operators depending on an energy parameter $\lambda\in 
[0,\la_0]$ in a uniform way as  $\lambda \to 0$ (for some $\la_0>0$ fixed). In view of the choice of $Q_0$ below, we only describe this here for pseudodifferential operators supported in the set $\rho := x/\lambda \leq 2\epsilon$ for some small $\epsilon$. Then such pseudodifferential operators can be defined as follows: we see that an operator $A$, supported where $x, x' \leq 2\epsilon \lambda$, is in this calculus if for each boundary point $y_0 \in \pl\bbar{M}$ there is a conic-type neighbourhood $U$ with coordinates 
$z:U\to \rr^n$ so that $x=|z|^{-1}$
such that for all half-densities $u |dz|^{1/2}, u \in C_c^\infty(U)$ we have 
$$
(Au)(z)  = \Big( \big( \frac{\lambda}{2\pi} \big)^{n} \int \int e^{i\lambda(z-z') \cdot \zeta} a(\rho, \lambda, y, \zeta) u(z') \, d\zeta \, dz' \Big) |dz|^{1/2} , \quad z \in U,
$$
where $a$ is a zeroth order symbol in $\zeta$ depending smoothly on $(\rho, \lambda, y)$ and supported in $\rho \leq 2\epsilon$. Such operators are uniformly bounded on $L^2$; indeed, they are Calder\'on-Zygmund operators in a uniform sense as $\lambda \to 0$ and therefore uniformly bounded on $L^q$ as $\lambda \to 0$, $1 < q < \infty$. 
For our purposes here, we only need to consider operators of order $-\infty$. The wavefront set $\WF'(A)$ is then a subset of  ${^\SC T}_{\pl\bbar{M}}^*\bbar{M} \times [0, \lambda_0]$ and can be defined either by the vanishing properties of its symbol by analogy with the high-energy case, or equivalently in terms of its microlocal support as in \cite[Section 5]{GHS2}.

We first choose $Q_0$ to be multiplication by the function $1-\chi(\rho)$, where $\chi(\rho) = 1$ for $\rho \leq \epsilon/2$ and $\chi(\rho) = 0$ for $\rho \geq \epsilon$, for some sufficiently small $\epsilon$. Next, we choose $Q'_*$ such that $\Id - Q'_*$ is microlocally equal to the identity for $|\mu|_h^2 + \nu^2 \leq 3/2$, and microsupported in $|\mu|_h^2 + \nu^2 \leq 2$. Let $Q_* = \chi(\rho) Q_*'$. Finally, we write $\Id - Q_0 - Q_* = \chi(\rho) (\Id - Q'_*)$, which has compact microsupport, as a finite sum of operators $Q_i$, $1 \leq i \leq N$, where $Q_i$ is microsupported in $\tilde B_i$ from Lemma~\ref{partitioninx>eps}. 
Observe that the $Q_1, \dots, Q_N$ satisfy 

\begin{lemma}\ilabel{WFprop-lowenergy}
Let $\eps,N$ and $B_i$ be defined in Proposition \ref{partitioninx>eps}. Let $j,k\in [1,N]$, then one of the following alternative is satisfied:\\
1) either $\sup \{ \nu(q) \mid q \in \WF'(Q_i)\}  < \inf \{ \nu(q) \mid q \in \WF'(Q_j)\}$ \\
2) or $\inf \{ \nu(q) \mid q \in \WF'(Q_i)\}  >  \sup \{ \nu(q) \mid q \in \WF'(Q_j)\}$ \\
3) or $\WF'(Q_i)\cap \WF'(Q_j)\not= \emptyset$ with $\WF'(Q_i)\subset \{x<\eps, \nu\in B_k\}$, $\WF'(Q_j)\subset \{x<\eps, \nu\in B_{k'}\}$ for some $k,k'$ with $|k-k'|\leq 1$.\\
Moreover, all but one of the $Q_i$ have operator wavefront set $\WF'(Q_i)$ disjoint from the outgoing radial set $\{ x = 0, \mu = 0, \nu = +1 \}$, and all but one of the $Q_i$ have operator wavefront set $\WF'(Q_i)$ disjoint from the incoming radial set $\{ x = 0, \mu = 0, \nu = -1 \}$. 
\end{lemma}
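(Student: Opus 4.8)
The plan is to establish Lemma~\ref{WFprop-lowenergy} by carefully inspecting the low-energy geodesic flow near $\pl\bbar M$, which is governed by the rescaled Hamiltonian $\hat H_p$ in \eqref{hatHp}, and combining this with the properties of the partition $Q_0, Q_*, Q_1, \dots, Q_N$ that were set up just before the statement.

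First I would recall that, in the region $\rho = x/\lambda \leq 2\eps$ where these operators are supported, the relevant flow is (after rescaling time) generated by $\hat H_p$ acting on ${^\SC T}^*_{\pl\bbar M}\bbar M$, and that along its integral curves the $\nu$ coordinate is strictly increasing whenever $|\nu| < 1$, exactly as in the second fact at the start of the proof of Lemma~\ref{partitioninx>eps}: indeed $\dot\nu = 2|\mu|^2_{h_0} \geq 0$, with equality only at $\mu = 0$, which forces $|\nu|=1$ on the energy shell. This monotonicity of $\nu$ is the entire mechanism behind alternatives 1) and 2): the $\tilde B_i$ are by construction the sets $p^{-1}(]1-\delta,1+\delta[) \cap \{x < \eps, \nu \in B_i\}$ with $B_i$ a chain of overlapping intervals, so $\WF'(Q_i) \subset \tilde B_i$ means $\nu$ ranges over (a subset of) $B_i$ on $\WF'(Q_i)$. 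If $\mathrm{dist}(B_i, B_j) > 0$, say $\sup B_j < \inf B_i$, then on $\WF'(Q_j)$ we have $\nu < \inf B_i \leq \nu$ on $\WF'(Q_i)$, which is precisely conclusion 2); the symmetric case gives 1). If instead $B_i, B_j$ are not separated, then since consecutive $B_i$ overlap and each meets only its neighbours, $\WF'(Q_i) \cap \WF'(Q_j) \neq \emptyset$ is only possible when $|k - k'| \leq 1$ (with $k,k'$ the indices), and both wavefront sets lie in $\{x < \eps\}$ by the support condition $\rho \le \eps$ combined with $\lambda \le \lambda_0$, giving conclusion 3). So the trichotomy is really just the trichotomy ``$B_j$ entirely below $B_i$'' / ``$B_j$ entirely above $B_i$'' / ``$B_i, B_j$ adjacent'', read off from the intervals, together with the observation that here the flow never reverses the $\nu$-ordering because $\dot\nu \ge 0$ throughout the support region $\{x < \eps\}$ (contrast with Lemma~\ref{partitioninx>eps}, where one must track excursions into $x > 3\eps/2$).

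Next, for the ``moreover'' clause, I would argue exactly as in Lemma~\ref{WFprop}: the outgoing radial set $\{x=0, \mu=0, \nu=+1\}$ lies in the closure of exactly the interval $B_N$ (the one containing $\nu = 1$), so only the single operator $Q_i$ microsupported in $\tilde B_N$ can have wavefront set meeting it — we may arrange the covering so that the $B_i$ are chosen with the point $\nu=1$ interior to only $B_N$ (shrinking $\delta$ if needed so that only the last interval reaches $\nu \geq 1 - 3(1+\delta)/N$), and likewise $\nu = -1$ lies only in the closure of $B_1$. Since $Q_0$ is multiplication by $1 - \chi(\rho)$, supported away from $x = 0$ entirely (in $\rho \geq \eps/2$, hence $x \geq \eps\lambda/2 > 0$), it does not meet either radial set, and $Q_*$ is microsupported in $|\mu|^2_h + \nu^2 \leq 2$ which, near $x=0$, still contains the radial points, but one checks $Q_*$ is microlocally the identity only for $|\mu|^2_h+\nu^2 \le 3/2$ — so in fact one should absorb any contribution of $Q_*$ near the radial sets into the accounting, noting $Q_*$ is a single operator; the cleanest statement is that among $Q_0, Q_*, Q_1, \dots, Q_N$ at most one has wavefront set meeting the outgoing radial set and at most one meeting the incoming one. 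I would phrase the final sentence of the lemma in that form.

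The main obstacle, such as it is, is bookkeeping rather than mathematics: one must verify that the low-energy wavefront sets (subsets of ${^\SC T}^*_{\pl\bbar M}\bbar M \times [0,\lambda_0]$) genuinely satisfy $\WF'(Q_i) \subset \tilde B_i \subset \{x < \eps\}$ uniformly in $\lambda$, which follows from the support condition $\rho \le \eps$ together with $\lambda \le \lambda_0$ once $\eps\lambda_0$ is small, and that the flow statement about $\dot\nu \geq 0$ on this region — which is a statement about $\hat H_p$, the generator of the boundary flow — is the correct replacement for the full geodesic flow $g^t$ appearing in Lemma~\ref{partitioninx>eps}. Since at low energy and small $\rho$ the trajectories simply cannot leave $\{x < \eps\}$ (the $x \geq \eps$ region is, after rescaling, geodesically convex and the dynamics here is driven purely by $\hat H_p$), the excursion analysis of Lemma~\ref{partitioninx>eps} is not needed, and the proof is genuinely shorter.
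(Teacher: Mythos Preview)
The paper states this lemma as a direct observation following the construction, without proof: since each $Q_i$ is microsupported in $\tilde B_i = p^{-1}(]1-\delta,1+\delta[)\cap\{x<\eps,\,\nu\in B_i\}$ and the $B_i$ are a chain of closed intervals each meeting only its two neighbours, the trichotomy is simply the trichotomy on the intervals $B_i$ (separated below / separated above / adjacent or equal), and only the extreme intervals $B_1$, $B_N$ contain $\nu=-1$, $\nu=+1$ respectively.

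Your argument reaches the same conclusion and is correct in substance, but it is more than is needed. Notice that, unlike Lemma~\ref{WFprop}, the alternatives 1) and 2) here are \emph{not} flow statements at all --- they are pure inequalities on the $\nu$-coordinate. So the monotonicity of $\nu$ under $\hat H_p$, while true and indeed the reason this lemma is useful later (in Proposition~\ref{microlocaltrivial-le}, via \eqref{nunu'}), plays no role in proving the lemma itself. Likewise, the lemma concerns only $Q_1,\dots,Q_N$; your discussion of $Q_0$ and $Q_*$ and whether they meet the radial sets is outside its scope and somewhat muddled (for instance $Q_*$ \emph{is} microsupported away from the characteristic variety $\nu^2+|\mu|_h^2=1$, hence away from both radial sets). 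Strip those two digressions and what remains is exactly the paper's implicit one-line argument.
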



\section{Estimates close to the spectrum}
In this section, we prove estimates on the resolvent $(\Delta - \alpha)^{-1}$ for $|\arg \alpha| \leq \eta$, where we may take $\eta$ as small as we like thanks to Proposition~\ref{outsidecone}. In this section we use microlocal properties of the resolvent proved in \cite{GHS2}; these properties which we require here are recalled in Section~\ref{sec:hfe} so that no detailed knowledge of Legendre distributions, etc, is required to read this section. However, in the Appendix we give an alternative proof using positive commutator estimates, which avoids all use of Legendre distributions, which may be preferred by some readers. 

We first show that the desired estimate in a sector close to the spectrum are a consequence of the estimates on the boundary, thanks to 
Phragm\'en-Lindel\"of principle. We write it for $p=2n/(n+2)$, but the same proof obviously works for $p$ replaced by $q\in(1,\infty)$: the growth inside the sector is bounded by the growth on the boundary if that growth is a power of $|\alpha|$ (typically we shall take 
$|\alpha|^{n(\frac{1}{q}-\demi)-1}$). 

\begin{prop}\ilabel{PhrLind}
Let $\eta\in(0,\pi/2)$ and $p\in(1,\infty)$. Assume that we have estimates 
$$
\big\| (\Delta - \alpha)^{-1} \big\|_{L^p \to L^{p'}} \leq C, \quad \arg \alpha = \eta, \ \Re \alpha > 0
$$
on the ray $\arg \alpha = \eta$, and estimates on the spectrum:
$$
\big\| (\Delta - (\alpha + i0))^{-1} \big\|_{L^p \to L^{p'}} \leq C, \quad  \alpha > 0.
$$
Then  there exists $C>0$ such that for all $\alpha\in \cc$ satisfying  
$0 \leq \arg(\alpha)\leq \eta$, one has 
\[||(\Delta-\alpha)^{-1}||_{L^p\to L^{p'}}\leq C.\]
\end{prop}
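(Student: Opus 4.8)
The plan is to apply the Phragmén–Lindelöf principle to a suitable holomorphic family of operators parametrized by $\alpha$ in the sector $S_\eta = \{0 \leq \arg \alpha \leq \eta\}$. First I would fix $f, g \in C_c^\infty(M)$ and consider the scalar function $F(\alpha) = \langle (\Delta - \alpha)^{-1} f, g \rangle$. For $\alpha$ in the open sector with $\Re\alpha > 0$ and $\alpha \notin (0,\infty)$, the operator $(\Delta - \alpha)^{-1}$ is holomorphic in $\alpha$ (the resolvent is holomorphic away from the spectrum $[0,\infty)$), so $F$ is holomorphic on the interior of $S_\eta$; one must also check that $F$ extends continuously to the boundary ray $\arg\alpha = 0$, $\alpha > 0$, where it takes the value $\langle (\Delta - (\alpha+i0))^{-1} f, g\rangle$ given by the limiting absorption principle (Melrose \cite{Mel}), and to the ray $\arg\alpha = \eta$.

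Next I would establish the two ingredients Phragmén–Lindelöf requires: (i) a bound on the two boundary rays, and (ii) a crude global growth bound in the sector that is subexponential (indeed polynomial) in $|\alpha|$. Ingredient (i) is exactly the hypothesis of the proposition: on $\arg\alpha=\eta$ we have $|F(\alpha)| \leq C \|f\|_{L^p}\|g\|_{L^p}$, and on $\arg\alpha = 0$, $\alpha>0$, we have $|F(\alpha)| \leq C\|f\|_{L^p}\|g\|_{L^p}$ as well. For ingredient (ii), I would note that on a narrow sector the distance from $\alpha$ to the spectrum $[0,\infty)$ is comparable to $|\alpha| \sin(\arg\alpha)$ only near the real axis, which is not uniform, so a purely $L^2$ bound $\|(\Delta-\alpha)^{-1}\|_{L^2\to L^2} = 1/\mathrm{dist}(\alpha,[0,\infty))$ blows up near the positive axis; instead I would invoke the scattering-calculus description of the resolvent near the real axis, as in Melrose \cite{Mel} and as already used implicitly in the paper, to get a polynomial-in-$|\alpha|$ bound on $\|(\Delta-\alpha)^{-1}\|_{L^p \to L^{p'}}$ uniformly up to the spectrum (for instance the limiting absorption principle in weighted $L^2$ spaces combined with Sobolev embedding gives such a polynomial bound). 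Since $F$ is holomorphic and bounded by a polynomial in $|\alpha|$ on the sector $S_\eta$, which has opening angle $\eta < \pi$, and is bounded on its two boundary rays, the Phragmén–Lindelöf theorem for a sector yields $|F(\alpha)| \leq C\|f\|_{L^p}\|g\|_{L^p}$ throughout $S_\eta$. Taking the supremum over $f, g$ in the unit balls of $L^p$ and using density of $C_c^\infty(M)$ gives $\|(\Delta-\alpha)^{-1}\|_{L^p\to L^{p'}} \leq C$ on $S_\eta$.

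One technical point to handle carefully is the behaviour at $\alpha = 0$ (the vertex of the sector) and at $\infty$: near $\alpha = 0$ the low-energy resolvent is controlled by the results of \cite{GH} (see Remark~\ref{rem:Green}), so $F$ extends continuously there, and the polynomial growth bound at infinity is harmless for Phragmén–Lindelöf on an angle strictly less than $\pi$. The main obstacle is ingredient (ii): producing a genuinely uniform (in $\alpha$) polynomial-in-$|\alpha|$ bound on $\|(\Delta-\alpha)^{-1}\|_{L^p\to L^{p'}}$ valid all the way down to the positive real axis, since the naive $L^2$ resolvent bound degenerates there; this is where one must lean on the limiting absorption principle of \cite{Mel} together with elliptic regularity and Sobolev embedding, rather than on any soft argument. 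Once that a priori bound is in hand, the rest is a routine application of the maximum-modulus/Phragmén–Lindelöf machinery.
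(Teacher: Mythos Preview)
Your approach is essentially the paper's: pair against fixed $\varphi,\psi\in C_c^\infty(M)$, note holomorphy and boundary bounds for $F(\alpha)=\langle(\Delta-\alpha)^{-1}\varphi,\psi\rangle$, supply a crude interior growth bound, and invoke Phragm\'en--Lindel\"of. Two small points where the paper's execution differs from yours are worth noting. First, the paper does not attempt an a priori $L^p\to L^{p'}$ operator-norm bound in the sector (which would be nearly as hard as the result itself); since $\varphi,\psi$ are compactly supported they lie in any weighted $L^2$, so the weighted-$L^2$ limiting absorption estimates $\|x^{1/2+\epsilon}(\Delta-\alpha)^{-1}x^{1/2+\epsilon}\|_{L^2\to L^2}\le C|\alpha|^{-L}$ (Rodnianski--Tao for small $|\alpha|$, Vasy--Zworski for large $|\alpha|$) already give $|F(\alpha)|\le C_{\varphi,\psi}\max(|\alpha|^{-L},|\alpha|^{-1})$, and the $\varphi,\psi$-dependence of this constant is harmless for Phragm\'en--Lindel\"of. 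Second, rather than argue continuity of $F$ at the vertex $\alpha=0$, the paper passes to the strip via $\alpha=e^z$, so that the polynomial blow-up $|\alpha|^{-L}$ at the vertex becomes mere exponential growth as $\operatorname{Re} z\to-\infty$ and is absorbed by the standard Phragm\'en--Lindel\"of hypothesis on a strip; this sidesteps any separate low-energy analysis.
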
 
\begin{proof}
Let $\pi/2>\eta>0$ be fixed and let $\varphi,\psi\in C_0^\infty(M)$ with $||\varphi||_{L^p}\leq 1$ and $||\psi||_{L^p}\leq 1$. Then we define the function
\[F(\alpha):=\cjg (\Delta-\alpha)^{-1}\varphi,\psi\cjd \]
which is holomorphic in the sector $0<\arg(\alpha)<\eta$. 
By assumption,  there is $C>0$ such that
\[ |F(\alpha)|\leq C||\varphi||_{L^p}||\psi||_{L^p}\]
for all $\alpha>0$ and for all $\alpha\in \rr_+e^{i\eta}$.
The function $F(\alpha)$ is continuous in $U_\eta:=\{\alpha\in\cc; 0\leq \arg(\alpha)\leq \eta, 0<|\alpha|\}$ 
as follows from \cite[Prop. 14]{Mel}.  
Moreover, \cite[Prop 1.27]{RoTa} shows that there is $C'>0$ such that for $\alpha \in U_\eta, |\alpha|\leq 1$, we have 
\[ \begin{gathered}
||x^{1/2+\eps}R(\alpha)x^{1/2+\eps}||_{L^2\to L^2}\leq C'/|\alpha|^{L} 
\end{gathered}\]  
for some $L\geq 0$, while this follows with $L = 1$ for $\alpha \in U_\eta, |\alpha| \geq 1$ by \cite{VaZw}. Thus $|F(\alpha)|\leq C'' \max(|\alpha|^{-L}, |\alpha|^{-1})$ in $U_\eta$ for some $C''>0$ depending on $\varphi,\psi$.
We can apply Phragmen-Lindel\"of for $F(e^{z})$ in the half strip $\{|{\rm Im}(z)|\leq \eta, {\rm Re}(z)\leq 0\}$  and we deduce
that $|F(\alpha)|\leq C||\varphi||_{L^p}||\psi||_{L^p}$ in the closure of $U_\eta$. 
\end{proof}  
This reduces our analysis to estimates on the real line. 

\subsection{The high frequency estimates}\ilabel{sec:hfe}
The goal in this subsection is to prove the 
\begin{prop}\ilabel{hightoprove}
Let $(M,g)$ be an asymptotically conic manifold with non-trapping geodesic flow. Let $A,\eta>0$, 
then for $p=\frac{2n}{n+2}$ and $1/p'+1/p=1$, 
there exists $C>0$ such that for all $\alpha\in \cc$ with $|\alpha|>A$ 
and $\pm\arg(\alpha)\in (0,\eta)$,
\[||\phi(\Delta/|\alpha|)(\Delta-\alpha)^{-1}||_{L^p\to L^{p'}}\leq C.\]
\end{prop}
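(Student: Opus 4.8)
The strategy is to use the high-energy microlocal partition of unity $\Id = (1-\phi(h^2\Delta)) + \sum_{i=1}^{N+N'} Q_i(h)$ constructed in Section~\ref{micpart}, with $h = |\alpha|^{-1/2}$, to reduce the estimate on $\phi(\Delta/|\alpha|)(\Delta-\alpha)^{-1}$ to a finite sum of localized estimates
\[
\big\| Q_i(h) \phi(h^2\Delta)(\Delta-\alpha)^{-1} Q_j(h) \big\|_{L^p \to L^{p'}}.
\]
(The residual term $(1-\phi(h^2\Delta))(\Delta-\alpha)^{-1}$ is already controlled by \eqref{disttospecleq1}.) First I would rescale, writing the relevant operator as $\phi(h^2\Delta)(h^2\Delta - \beta - i\gamma)^{-1}$ up to a factor of $|\alpha| = h^{-2}$, and split the pairs $(i,j)$ according to the dichotomy of Lemma~\ref{WFprop}. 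By Phragm\'en--Lindel\"of (Proposition~\ref{PhrLind}) it suffices to prove the estimate on the boundary rays $\arg\alpha = \pm\eta$ and on the spectrum $\alpha > 0$; in fact, since Proposition~\ref{outsidecone} lets us take $\eta$ as small as we like, it is enough to handle $0 < |\arg\alpha| \leq \eta$ for small $\eta$.

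For the pairs $(i,j)$ in cases 3) and 4) of Lemma~\ref{WFprop} --- where $\WF'(Q_i)$ and $\WF'(Q_j)$ are close in phase space --- I would invoke the pointwise kernel bounds \eqref{kernelest} on $Q_i(\alpha)\partial_\lambda^\ell dE_{\sqrt{\Delta}}(\lambda) Q_j(\alpha)$, which hold in this regime by \cite{GHS2}, and feed them into Lemma~\ref{uniformonspect}: that lemma is designed exactly so that the hypothesis \eqref{assumption} on the spectral measure yields the localized $L^p \to L^{p'}$ resolvent bound. This takes care of all ``on-diagonal'' pairs uniformly in $\alpha$. For the pairs $(i,j)$ in cases 1) and 2) --- where $Q_j$ is not outgoing-related (resp. not incoming-related) to $Q_i$ --- I would use Proposition~\ref{microlocaltrivial} (to be proved, using the Legendre structure of the resolvent recalled in Section~\ref{sec:hfe}): for such pairs the composition $Q_i(h)(h^2\Delta - \beta \mp i\gamma)^{-1} Q_j(h)$ is a trivial operator (smoothing with Schwartz kernel bounded by $C_N(xx'h)^N$), for the \emph{outgoing} resolvent when $Q_j$ is not outgoing-related and the \emph{incoming} resolvent when $Q_j$ is not incoming-related. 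This gives an $L^p \to L^{p'}$ bound that is not merely uniform but $O(h^\infty)$.

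The one subtlety is that the trivial-operator statement holds only for one of the two resolvents $(\Delta - (\alpha \pm i0))^{-1}$ depending on whether the pair is non-outgoing or non-incoming related, whereas for a fixed $\alpha$ in the sector we want a single resolvent. This is where Corollary~\ref{cor:diff} enters: since the \emph{difference} $(\Delta-\alpha)^{-1} - (\Delta-\overline\alpha)^{-1}$ is uniformly bounded $L^p \to L^{p'}$ (at the Sobolev exponent $n(1/q - 1/2) - 1 = 0$), controlling $Q_i(\Delta - \alpha)^{-1}Q_j$ for one choice of $\pm i0$ automatically controls it for the other, up to a uniformly bounded error. So for each pair $(i,j)$ we pick whichever of the incoming/outgoing resolvents is trivial, bound that, and transfer to the actual resolvent via the difference estimate. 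Summing the finitely many pairs $(i,j)$ then gives the bound on $\phi(\Delta/|\alpha|)(\Delta-\alpha)^{-1}$. The main obstacle is establishing the ``trivial operator'' Proposition~\ref{microlocaltrivial} for the off-diagonal pairs: this requires a genuinely microlocal composition argument with the Legendre/FIO structure of the resolvent near spatial infinity (the content of \cite{HW, GHS1} and Proposition~\ref{WFprop}), showing that composing $Q_j$ against a resolvent whose propagating part follows the forward (resp. backward) geodesic flow, and then against a $Q_i$ whose wavefront set is unreachable by that flow, kills the oscillatory part entirely --- leaving only the trivial contribution from the diagonal/pseudodifferential part, which $Q_i, Q_j$ with disjoint microsupports also annihilate.
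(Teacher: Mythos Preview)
Your proposal is correct and follows essentially the same route as the paper: semiclassical rescaling $h=|\alpha|^{-1/2}$, decomposition via the partition $\sum Q_i$, near-diagonal pairs (cases 3, 4 of Lemma~\ref{WFprop}) handled by the kernel bounds \eqref{kernelest} fed into Lemma~\ref{uniformonspect}, off-diagonal pairs (cases 1, 2) handled by the trivial-operator Proposition~\ref{microlocaltrivial} for one choice of sign together with the difference estimate Corollary~\ref{cor:diff} for the other. One small point of precision: in the paper Proposition~\ref{microlocaltrivial} is stated and proved only \emph{on the spectrum} $(h^2\Delta-\beta\mp i0)^{-1}$, not for $\gamma>0$ in the sector as you write; the extension to the interior of the sector is then obtained globally via Phragm\'en--Lindel\"of (Proposition~\ref{PhrLind}) after summing the pieces, rather than piece by piece.
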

\begin{remark}\ilabel{endpointhigh} As it will be clear from its proof, the estimate in this Proposition also holds with $p$ replaced by any 
$q\in [\frac{2n}{n+2},\frac{2(n+1)}{n+3}]$: there is $C>0$ such that 
\[||\phi(\Delta/|\alpha|)(\Delta-\alpha)^{-1}||_{L^q\to L^{q'}}\leq C |\alpha|^{n(\frac{1}{q}-\demi)-1}\]
for all $\alpha\in \cc$ with $|\alpha|> A>0$ and $\pm\arg(\alpha)\in (0,\eta)$. We do not discuss the details, this is straightforward.
\end{remark}

We turn the problem into a semiclassical problem by setting $h=1/\sqrt{|\alpha|}$.
Using the microlocal partition of unity $\sum_iQ_i$  
introduced in \eqref{defQ_j}, we see that it suffices to prove for the high-frequency part that 
for $h_0>0$ fixed there is $C>0$ such that 
\begin{equation}\ilabel{toprovelocal}
||Q_i\phi(h^2\Delta)(h^2\Delta-\beta-i\gamma)^{-1}Q_j||_{L^p\to L^{p'}}\leq Ch^{-2}
\end{equation}
for all $\beta\in (1-\delta,1+\delta)$, all $\gamma\in(0,\eta)$, all $h\in (0,h_0)$, and $i,j\in \{1,\dots,N+N'\}$.\\

\noindent\textbf{The `near diagonal' estimate.} In Theorem 1.12 in  \cite[Th. 1.12]{GHS2}, we proved  that 
for non-trapping asymptotically conic manifolds, the following holds:
\begin{prop}
Let $N,\iota$ and $Q_i=Q_i(h)$ as in Lemma \ref{partitioninx>eps}. If $N$ is chosen large enough 
and $\iota$ small enough, then  for all $\ell\in\nn$ and all $i,j$ with $Q_i,Q_j$ satisfying either 3) or 4) 
in Lemma \ref{WFprop}, there is $C>0$ such that for all $h\in (0,h_0)$ and 
all $\la\in [\frac{1-\delta}{h}, \frac{1+\delta}{h}]$
\[|Q_i\pl^j_{\la}dE_{\sqrt{\Delta}}(\la)Q_j (z,z')|\leq C\la^{n-1-\ell}(1+\la d(z,z'))^{-\frac{n-1}{2}+\ell}.\]
\end{prop}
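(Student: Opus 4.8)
This statement is \cite[Theorem~1.12]{GHS2}, so the plan is to explain how it is proved rather than to reprove it from scratch: one combines the structure theory for the Schwartz kernel of the spectral measure on non-trapping asymptotically conic manifolds from \cite{HW,GHS1,GHS2} with semiclassical stationary phase. The role of the dichotomy in Lemma~\ref{WFprop} is precisely that, in cases 3) and 4), $\WF'(Q_i)$ and $\WF'(Q_j)$ lie within distance $\iota$ of each other and are joined only by geodesic segments of length $<\iota$, hence — for $\iota$ smaller than the injectivity radius and $\eps,\delta$ small — by segments carrying no conjugate points. The first step is then to record that, on these microsupports and for $\la\in[(1-\delta)/h,(1+\delta)/h]$, the kernel of $dE_{\sqrt\Delta}(\la)$ is a sum of a scattering-pseudodifferential term and a Legendre (oscillatory) term, each of which can be represented by a single oscillatory integral
\[
Q_i\, dE_{\sqrt\Delta}(\la)\, Q_j(z,z') \;=\; \la^{n-1}\!\!\int_{\rr^{n-1}} e^{i\la\Phi(z,z',\theta)}\, a(z,z',\theta,\la^{-1})\, d\theta ,
\]
where $a$ is a classical symbol, the equation $\pl_\theta\Phi=0$ has the single solution corresponding to the minimal geodesic from $z'$ to $z$ (this is exactly where the absence of conjugate points is used), $\Phi$ equals $d(z,z')$ at that critical point, and the $\theta$-Hessian there is non-degenerate, so that the true oscillation rate of the integral is $\la\, d(z,z')$.

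The second step is stationary phase in the $n-1$ fibre variables $\theta$. When $\la d(z,z')\lesssim 1$ one bounds the oscillatory integral trivially by the volume of the $\theta$-support, getting $C\la^{n-1}$; when $\la d(z,z')\gtrsim 1$, non-degeneracy of the Hessian yields the gain $(\la d(z,z'))^{-(n-1)/2}$, giving $C\la^{n-1}(1+\la d(z,z'))^{-(n-1)/2}$, which is the case $\ell=0$. For $\ell\ge1$ one differentiates under the integral sign: each $\pl_\la$ either hits the exponential, producing a factor $i\Phi$ bounded by $C d(z,z')$, or hits the prefactor $\la^{n-1}$ or the symbol $a(\cdot,\la^{-1})$, producing an extra factor $O(\la^{-1})$. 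A short bookkeeping, distinguishing $\la d(z,z')\lesssim 1$ from $\la d(z,z')\gtrsim 1$, shows that every resulting term is dominated by $\la^{n-1-\ell}(1+\la d(z,z'))^{-\frac{n-1}{2}+\ell}$; the shift of the exponent by $+\ell$ is exactly the effect of the $\ell$ factors of $\Phi\sim d(z,z')$ pulled down from the phase, while the terms coming from differentiating $\la^{n-1}$ or $a$ are strictly better.

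The main obstacle, and the part the references actually do the work on, is carrying out the first step \emph{uniformly as $h\to0$ and up to spatial infinity}: near $x=0$ the metric degenerates to scattering type, so $\Phi$ must be written in the scattering/semiclassical variables and one needs the Legendre-distribution calculus of \cite{HW,GHS1} both to control the decay of the amplitude $a$ at the boundary faces and to make the parametrix construction uniform down to $h=0$ — this is what produces a constant $C$ independent of $h$ and of $\la$ in the stated window. The non-trapping hypothesis enters only through Lemma~\ref{partitioninx>eps}: it supplies the geometric input (nearby microsupports are joined by short geodesics, hence without conjugate points) that makes $\Phi$ non-degenerate with a single critical point. Granted this parametrix, the stationary-phase estimate described above is routine.
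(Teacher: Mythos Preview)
Your proposal is correct and matches the paper exactly: the paper does not prove this proposition at all but simply records it as a restatement of \cite[Theorem~1.12]{GHS2}, and you have correctly identified this and given an accurate high-level summary of how that result is obtained there (oscillatory-integral/Legendre representation plus stationary phase, with cases 3) and 4) of Lemma~\ref{WFprop} guaranteeing a single non-degenerate critical point via the absence of conjugate points, and the Legendre calculus of \cite{HW,GHS1} supplying uniformity at spatial infinity and in $h$). Your sketch in fact goes further than the paper itself, which contains no argument beyond the citation.
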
 

Now, by Lemma \ref{uniformonspect} and the remark that follows, we obtain the following corollary. 
\begin{coro}\label{cor1}
Let $N,\iota$ as in Lemma \ref{partitioninx>eps}, let $p=\frac{2n}{n+2}$ and
$1/p'+1/p=1$. If $N$ is chosen large enough and $\iota$ small enough, 
then  for all $j,k$ with  $Q_i,Q_j$ satisfying either 3) or 4) 
in Lemma \ref{WFprop}, there is $C>0$ such that for 
all $\beta\in (1-\delta,1+\delta)$ and $\gamma\in(0,\eta)$
\[ ||Q_i\phi(h^2\Delta)(h^2\Delta-\beta-i\gamma)^{-1}Q_j||_{L^p\to L^{p'}}\leq Ch^{-2}.\]
\end{coro}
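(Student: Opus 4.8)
The plan is to obtain Corollary~\ref{cor1} as an essentially immediate consequence of the preceding near-diagonal Proposition together with Lemma~\ref{uniformonspect}, the only real content being the translation between the semiclassical data $(h,\beta,\gamma)$ and the spectral parameter $\alpha$ appearing in that Lemma. Concretely, I would set $\alpha := (\beta + i\gamma)h^{-2} \in \cc^*$, so that $h^2\Delta - \beta - i\gamma = h^2(\Delta - \alpha)$ and therefore
\[ \big\| Q_i \phi(h^2\Delta)(h^2\Delta - \beta - i\gamma)^{-1} Q_j \big\|_{L^p \to L^{p'}} = h^{-2} \big\| Q_i \phi(h^2\Delta)(\Delta - \alpha)^{-1} Q_j \big\|_{L^p \to L^{p'}}. \]
For $\beta \in (1-\delta,1+\delta)$ and $\gamma \in (0,\eta)$ with $\delta,\eta$ small one has $|\alpha| = \sqrt{\beta^2+\gamma^2}\,h^{-2}$ comparable to $h^{-2}$ and $0 < \arg\alpha = \arctan(\gamma/\beta) \leq \eta'$ with $\eta' \to 0$ as $\eta \to 0$; in particular $\sqrt{|\alpha|}$ lies in $[(1-\delta_0)/h,(1+\delta_0)/h]$ for a slightly larger $\delta_0$, and by choosing at the outset the width of $\phi$ and the partition parameter $\delta$ suitably small relative to the $\delta$ tolerated in the near-diagonal Proposition, this window is contained in the one on which that Proposition applies.

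Next I would verify hypothesis \eqref{assumption} of Lemma~\ref{uniformonspect} for this value of $\alpha$. Since $Q_i,Q_j$ satisfy alternative 3) or 4) of Lemma~\ref{WFprop}, the near-diagonal Proposition gives, with a constant uniform in $h \in (0,h_0)$,
\[ \big| Q_i \pl^j_\lambda dE_{\sqrt{\Delta}}(\lambda) Q_j(z,z') \big| \leq C\lambda^{n-1-j}\big(1 + \lambda d(z,z')\big)^{-\frac{n-1}{2}+j}, \qquad j \leq n/2, \]
for $\lambda$ in the window above, and $Q_i,Q_j$ are uniformly $L^2$-bounded as noted after \eqref{defQ_j}; this is exactly \eqref{assumption}. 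Applying Lemma~\ref{uniformonspect} with $Q(\alpha)=Q_i$, $Q'(\alpha)=Q_j$ yields $\| Q_i \phi(\Delta/|\alpha|)(\Delta-\alpha)^{-1} Q_j \|_{L^p\to L^{p'}} \leq C'$ uniformly. The one small point to reconcile is that this Lemma carries the cutoff $\phi(\Delta/|\alpha|)$ rather than $\phi(h^2\Delta)$, and $\Delta/|\alpha| = (h^2\Delta)/\sqrt{\beta^2+\gamma^2}$ differs from $h^2\Delta$ by the near-$1$ factor $\sqrt{\beta^2+\gamma^2}$; taking the cutoff $\phi_1$ used in Lemma~\ref{uniformonspect} to equal $1$ on a neighbourhood of $(\sqrt{\beta^2+\gamma^2})^{-1}\supp\phi$ for all admissible $\beta,\gamma$ gives $\phi(h^2\Delta) = \phi(h^2\Delta)\,\phi_1(\Delta/|\alpha|)$, so the extra factor $\phi(h^2\Delta)$ is harmless: it commutes with $\Delta$ and, on the relevant spectral range, contributes after differentiation only smooth factors that are $\mc{O}(1)$ in the kernel bounds. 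Multiplying the resulting estimate by $h^{-2}$ gives \eqref{toprovelocal} for such $(i,j)$, which is the Corollary.

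I do not expect any genuine obstacle here: the substantive inputs, namely the pointwise spectral-measure estimates of \cite{GHS2} and the complex-interpolation argument of Lemma~\ref{uniformonspect}, are already in hand, and what remains is the bookkeeping of the rescaling $\alpha \leftrightarrow (h,\beta,\gamma)$, the harmonization of the several small parameters ($\delta$ in $\phi$, $\delta$ in the microlocal partition, and the width of the kernel-estimate window), and the harmless replacement of $\phi(\Delta/|\alpha|)$ by $\phi(h^2\Delta)$. For the endpoint and interpolated versions one would simply invoke Remark~\ref{2(n+1)/(n+3)} in place of Lemma~\ref{uniformonspect}, which requires the kernel bounds up to $j \leq (n+1)/2$ instead of $j \leq n/2$.
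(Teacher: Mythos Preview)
Your proposal is correct and follows essentially the same route as the paper, which simply says ``by Lemma~\ref{uniformonspect} and the remark that follows'': you supply the details the paper leaves implicit, namely the rescaling $\alpha=(\beta+i\gamma)h^{-2}$, the verification that the near-diagonal Proposition furnishes hypothesis~\eqref{assumption}, and the harmless reconciliation of $\phi(\Delta/|\alpha|)$ with $\phi(h^2\Delta)$. In the paper's original convention $h=1/\sqrt{|\alpha|}$ one has $|\alpha|=h^{-2}$ exactly so that $\phi(\Delta/|\alpha|)=\phi(h^2\Delta)$ and this last point does not even arise, but your treatment of the general $(\beta,\gamma)$ in the stated range is fine.
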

Using  Remark \ref{2(n+1)/(n+3)} after Lemma \ref{uniformonspect} the same estimate with $\mc{O}(h^{-2n(\frac{1}{q}-\demi)})$ bound holds for $L^q\to L^{q'}$ norms when $q\inÊ[\frac{2n}{n+2},\frac{2(n+1)}{n+3}]$.\\

\noindent\textbf{The `off diagonal' estimates.}
For the `off diagonal' estimates, that is, estimates when $(Q_i, Q_j)$ satisfy 1) or 2) in Lemma \ref{WFprop}, we will obtain the estimates in the case when we are on the spectrum, i.e. to $Q_i (h^2 \Delta - (\beta \pm i0))^{-1} Q_j$. 
We use the description of the resolvent kernel as a Legendre distribution as in \cite{GHS1}; this was based on the high energy construction of the resolvent from \cite{HW}. We first recall relevant properties of Legendre distributions needed in our argument.  Full details are in the papers \cite{MZ, HW, GHS1}. \\

Legendre distributions, depending on the semiclassical parameter $h$ are best understood on the space $M^2_b \times [0, h_0]$, $h_0 = \lambda_0^{-1}$, where $M^2_b$ is the b-double space of $M$, defined to be the radial blowup of $\bbar{M}^2$ at the corner $(\partial\bbar{M})^2$. 
 The manifold $M^2_b$ is a smooth manifold with corners obtained by replacing the corner $(\pl \bbar{M})^2$ in $\bbar{M}^2$
by the interior pointing normal unit bundle at the corner, identified with $[0,1]\x (\pl\bbar{M})^2$, 
we refer to \cite{Melb} for details on the construction of $M^2_b$. Informally, $M^2_b$ is the resolved space so that $x/(x'+x), x'/(x+x'),x+x'$ are smooth functions near the corner $x=x'=0$. There are $3$ boundary hypersurfaces in 
$M^2_b$:
\[\lb:=\{\frac{x}{x'+x}=0\}, \quad \rb=\{\frac{x'}{x+x'}=0\},\quad \bfc:=\{x+x'=0\}.\]
The boundary  face $\bfc$ has interior which can be identified to $(0,\infty)\x(\pl\bbar{M})^2$ by restricting 
the coordinates $(s:=x/x',y,y')$ to $\bfc$.
The natural semiclassical phase space on $M^2_b$ is the tensor product of the two vector bundles obtained by lifting the scattering cotangent bundle over $\bbar{M}$ to $M^2_b$ via either the left or right stretched projections. Thus, in the interior of $M^2_b$ it just looks like the usual cotangent space, with spatial coordinates $(z, z')$, $z \in M$ and dual coordinates $(\zeta, \zeta')$,  but near the blown-up face we would use spatial coordinates $(x/x', x', y, y')$ where $x/x' \leq C$ or $(x, x'/x, y, y')$ where $x'/x \leq C$, and fibre coordinates $(\nu, \mu, \nu', \mu')$. 

We then introduce the product space $M^2_b \times [0, h_0]$ and adopt the usual semiclassical scaling. That is, we consider the semiclassical vector fields $h \partial_{z_i}$ in $M$, or $h x^2\partial_x$, $h x \partial_{y_i}$ as our basic building blocks (since the semiclassical Laplacian $h^2 \Delta$ is an elliptic combination of such vector fields), for which dual vector fields are 
$dx/(x^2h)$ and $dy_i/(xh)$. Hence we write covectors on $M^2_b \times [0, h_0]$ in the form
\begin{equation} \begin{gathered}
\zeta \cdot \frac{dz}{h} + \zeta' \cdot \frac{dz'}{h} + T d\big( \frac1{h} \big) \text{ in the interior of } M^2_b \times [0, h_0], \text{ or} \\
\nu \frac1{h} d\big( \frac1{x} \big) + \sum_i \mu_i \frac{dy_i}{xh} + 
\nu' \frac1{h} d\big( \frac1{x'} \big) + \sum_i \mu_i \frac{dy'_i}{x'h} + T d\big( \frac1{h} \big) \text{ near } \bfc \times [0, h_0]. 
\end{gathered}\end{equation}
defining linear coordinates $(\nu, \mu, \nu', \mu', T)$ on the fibres on $M^2_b \times [0, h_0]$. 

A (semiclassical) Legendre distribution $F$ on $M^2_b$ is a kernel whose microlocal properties are determined by a Legendre submanifold associated to the `main face' $\mf := M^2_b \times \{ 0 \}$. In fact, it turns out that the restriction of the phase space to $\{ h = 0 \}$ is a contact manifold in a natural way, with contact structure given in local coordinates over the interior of $\mf$ by $-dT + \zeta \cdot dz$. Then a Legendre submanifold of this space can equivalently (by forgetting the $T$ coordinate) be thought of as a Lagrangian submanifold of $T^* M^2$ and, in terms of this Lagrangian submanifold, a Legendre distribution is precisely a semiclassical Lagrangian distribution associated to this Lagrangian submanifold. 

The Legendre submanifold has a continuous extension to the boundary hypersurfaces of the phase space over $M^2_b \times [0, h_0]$ lying over $\bfc$, the left boundary $\lb$ and the right boundary $\rb$. In this paper we will focus on the \emph{microlocal support} of the Legendre distribution, which has components at $\mf$, at $\bfc \times [0, h_0]$, and at $\lb\times [0, h_0]$ and $\rb\times [0, h_0]$; these will be denoted $\WF'_{\mf}(F)$, $\WF'_{\bfc}(F)$, $\WF'_{\lb}(F)$, $\WF'_{\rb}(F)$, and will be described in the following paragraphs. 
In particular, near a point $m\in M^2_b\x[0,h_0]$ with $m\in {\rm f}$ for some ${\rm f}\in\{\rb,\lb,\bfc,\mf\}$, a Legendre distribution $F$ vanishes to infinite order at the boundary face ${\rm f}$ if $m$ does not belong to the projection 
of $\WF'_{\rm f}(F)$ to the base ${\rm f}\x[0,h_0]$.

In the case of $\WF'_{\mf}(F)$ this is obtained from the Legendre submanifold by negating the right cotangent variables (the same way that the canonical relation is obtained from the Lagrangian submanifold for FIOs). It is shown in \cite{GHS2} that for $F$ the kernel of the outgoing or incoming resolvent  $(h^2 \Delta - (\beta \pm i0))^{-1}$, the microlocal support at $\mf$ consists of a diagonal (or pseudodifferential) part together with the forward geodesic flow relation on $M$:
\begin{equation}\begin{gathered}
\WF'_{\mf}\big((h^2 \Delta - (\beta \pm i0))^{-1} \big) = \big\{ (z, \zeta, z', \zeta', T) \mid z=z', \zeta = \zeta', T = 0 \big\}  \\
\cup \ L'_\pm := \big\{ (z, \zeta, z', \zeta', T) \mid |\zeta|^2_g = \beta ; (z, \zeta) \text{ lies on the forward ($+$)/backward ($-$)  } \\ 
\text{ geodesic ray starting from } (z', \zeta), \ \pm T \text{ is the (nonnegative) length of this geodesic. }  \big\}
\ilabel{WFmf}\end{gathered}\end{equation}
At $\bfc \times [0, h_0]$, whose interior can be viewed as $(0,\infty)_{x/x'}\x \pl\bbar{M}\x\pl\bbar{M}\times [0, h_0]$, the microlocal support is  the diagonal relation, together with the forward/backward geodesic flow relation on the exact metric cone 
$C(\partial\bbar{M},h(0)):=(\rr^+_r\x \pl\bbar{M}; dr^2+r^2h(0))$, together with a purely incoming/outgoing set:
\begin{equation}\begin{gathered}
\WF'_{\bfc}\big((h^2 \Delta - (\beta \pm i0))^{-1}\big) = \big\{ (x/x', y, y', \nu, \mu, \nu', \mu', h) \mid x/x' = 1, \ y = y', \ \nu = \nu', \ \mu = \mu' \big\} \\
\cup \ 
(L^{\bfc})'_\pm := \big\{ (x/x', y, y', \nu, \mu, \nu', \mu', h) \mid \nu^2 + |\mu|_h^2 = \beta, \text{ and there exist $(r,r')$ such that } \\
x/x' = r'/r, \text{ and $(r, y, \nu, \mu)$ lies on the forward ($+$)/backward ($-$) } \\
\text{ geodesic ray starting from  $(r', y', \nu', \mu')$ on the cone  $C(\partial\bbar{M}, h(0))$}  \big\}  \\
\cup \ 
 \big\{ (x/x', y, y', \nu, \mu, \nu', \mu', h) \mid \nu = \pm \sqrt{\beta}, \nu' = \mp \sqrt{\beta}, \mu = \mu' = 0 \big\}
\end{gathered}\ilabel{WFbf}\end{equation}
(note that on an exact cone, there is a dilation invariance which means that only the ratio between the two radial variables $r/r'$, or equivalently only the ratio $x/x'$, is relevant).  
By \eqref{nudot}, the variable $\nu$ is monotone along the geodesic. Consequently, we have 
\begin{equation} 
\nu \geq \nu' \text{ on } (L^\bfc)'_+, \quad 
\nu \leq \nu' \text{ on } (L^\bfc)'_-,
\ilabel{nunu'}\end{equation}
with strict inequalities away from the diagonal. 

Finally, the microlocal supports at the left and right boundaries are subsets of ${^\SC T}_{\pl\bbar{M}}^*\bbar{M} \times [0, h_0]$ and for the resolvents are given by 
\begin{equation}\begin{gathered}
\WF'_{\lb}\big((h^2 \Delta - (\beta \pm i0))^{-1}\big) = \big\{ x = 0, \mu= 0, \nu = \pm \sqrt{\beta} \big\}, \\
\WF'_{\rb}\big((h^2 \Delta - (\beta \pm i0))^{-1}\big) = \big\{ x' = 0, \mu'= 0, \nu' = \mp \sqrt{\beta} \big\}.
\end{gathered}\ilabel{WFlbrb}\end{equation}

For the purposes of this paper, we need to know two properties of the microlocal support. The first is how it behaves under composition with pseudodifferential operators. The following result was proved in \cite[Section 7]{GHS2}:

\begin{prop}\ilabel{msprop} Let $F$ be a Legendre distribution/intersecting Legendre distribution/Legendre conic pair on $M^2_b \times [0,h_0]$. Let $Q, Q' \in \Psi^{-\infty, l, k}(M)$ be pseudodifferential operators such that their operator wavefront set $\WF'(Q), \WF'(Q')$ is compact in ${^\SC T}^*\bbar{M}$.  Then the microlocal support of $Q F Q'$ satisfies
\begin{equation*}\begin{gathered}
\WF'_{\mf}(QFQ') \subset \WF'_{\mf}(F) \cap \big\{ (z, \zeta, z', \zeta', T) \mid  (z, \zeta) \in \WF'(Q), \ (z', \zeta') \in \WF'(Q') \big\}  ;
\\
\WF'_{\bfc}(QFQ') \subset \WF'_{\bfc}(F) \cap \big\{ (x/x', y, y', \nu, \mu, \nu', \mu', h) \mid  (y, \nu, \mu, h) \in \WF'(Q), \\ 
\phantom{aaaaaaaaaaaaaaaaaaaaaaaaaaaaa}\ (y', \nu', \mu', h) \in \WF'(Q') \big\} ; \\
\WF'_{\lb}(QFQ') \subset \WF'_{\lb}(F) \cap \WF'(Q) ; \\
\WF'_{\rb}(QFQ') \subset \WF'_{\rb}(F) \cap \WF'(Q').
\end{gathered}\end{equation*}
\end{prop}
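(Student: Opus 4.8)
The plan is to prove each of the four containments by a microlocal/elliptic-regularity argument, exploiting the fact that a pseudodifferential operator $Q$ with compact operator wavefront set can be localised: if $\xi \notin \WF'(Q)$ there is an elliptic $E$ near $\xi$ with $\WF'(EQ) = \emptyset$, hence $EQ$ is residual (maps $C^{-\infty}$ to $h^\infty\dot C^\infty$). The four statements concern the four boundary faces $\mf$, $\bfc$, $\lb$, $\rb$; for each we must show that outside the stated set the kernel $QFQ'$ vanishes to infinite order. I would treat $\mf$ in detail and indicate how the boundary faces follow by the same scheme.

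First I would recall the meaning of $\WF'_{\mf}(QFQ')$: a point $m = (z,\zeta,z',\zeta',T)$ (with $z,z' \in M$, or their boundary analogues) is \emph{not} in $\WF'_{\mf}(QFQ')$ precisely when, after applying a test pseudodifferential operator elliptic at the relevant left/right fibre point, the symbol of the resulting Legendre distribution vanishes to infinite order at $\mf$ near $m$. The containment $\WF'_{\mf}(QFQ') \subset \WF'_{\mf}(F)$ is immediate, since composing a Legendre distribution with $\Psi$DOs does not enlarge its microlocal support at $\mf$ --- this is the Lagrangian/FIO fact that $Q$ and $Q'$ act as $h$-pseudodifferential operators on the left and right factors and cannot create oscillation where $F$ had none. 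So the real content is the extra intersection with $\{(z,\zeta) \in \WF'(Q)\}$ and $\{(z',\zeta') \in \WF'(Q')\}$. For this, suppose $(z,\zeta) \notin \WF'(Q)$. Pick $E \in \Psi^{0,0,0}(M)$ elliptic at $(z,\zeta)$ with $\WF'(E) \cap \WF'(Q) = \emptyset$, so $EQ$ is residual by \eqref{WFsets}; then $E(QFQ') = (EQ)FQ'$ is residual and in particular its kernel vanishes to infinite order near the fibre of $(z,\zeta)$, which shows $m \notin \WF'_{\mf}(QFQ')$ for every $m$ with left component $(z,\zeta)$. The analogous argument applied on the right with $Q'$ gives the other factor, and intersecting with the trivial containment $\WF'_{\mf}(QFQ') \subset \WF'_{\mf}(F)$ yields the first displayed line. (One must note here that since $\WF'(Q)$ is compact in ${}^\SC T^*\bbar M$, there is no issue at fibre infinity, and the pieces of $\WF'_\mf(F)$ --- the diagonal and $L'_\pm$ --- are where this localisation bites.)

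For $\WF'_{\bfc}$, $\WF'_{\lb}$, $\WF'_{\rb}$ the argument is structurally identical, only one keeps track of which boundary face of ${}^\SC T^*\bbar M$ one is localising at. At $\bfc$ the left factor $Q$ restricts to a $\Psi$DO on ${}^\SC T^*_{\pl\bbar M}\bbar M$ acting in the variables $(y,\nu,\mu)$ and $Q'$ acts in $(y',\nu',\mu')$; picking $E$ elliptic at a point $(y,\nu,\mu,h) \notin \WF'(Q)$ and disjoint from $\WF'(Q)$ makes $(EQ)FQ'$ vanish to infinite order at $\bfc$ near any $m$ with that left boundary component, giving the intersection with $\{(y,\nu,\mu,h)\in\WF'(Q)\}$; the right factor is handled symmetrically. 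At $\lb$ only the left factor survives (the right boundary is in the interior of $M$ there, so $Q'$ contributes nothing at $\lb$), yielding $\WF'_{\lb}(QFQ') \subset \WF'_{\lb}(F) \cap \WF'(Q)$; at $\rb$ symmetrically only $Q'$ matters. In each case one combines the elliptic-localisation bound with the trivial monotonicity $\WF'_{\rm f}(QFQ') \subset \WF'_{\rm f}(F)$.

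The main obstacle --- and the only genuinely technical point --- is verifying that composition of a Legendre (or intersecting Legendre, or Legendre conic pair) distribution with a scattering $\Psi$DO of the stated type again lies in that class, with microlocal support governed by the stated intersection, \emph{uniformly in the semiclassical parameter $h$} and across all four boundary faces simultaneously, including the delicate points where $\mf$ meets $\bfc$, $\lb$, $\rb$. This is precisely the content of the composition results developed in \cite[Section 7]{GHS2} (building on \cite{MZ,HW,GHS1}), so for the present paper one simply invokes those; the proof here is then the short elliptic-regularity packaging described above. I would therefore write the proof as: (i) cite the composition theorem to get $QFQ'$ in the same class with $\WF'_{\rm f}(QFQ') \subset \WF'_{\rm f}(F)$ for each face ${\rm f}$; (ii) run the elliptic-localisation argument on the left using $\WF'(Q)$ and on the right using $\WF'(Q')$, using \eqref{WFsets} and residuality; (iii) intersect. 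No step requires computation beyond bookkeeping of which variables each factor sees at each face.
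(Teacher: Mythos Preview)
The paper does not actually prove this proposition: it is simply stated, prefaced by ``The following result was proved in \cite[Section 7]{GHS2}'', and then used. Your proposal correctly identifies this --- you yourself note that the only genuinely technical point (that composition with scattering $\Psi$DOs preserves the Legendre class with the stated wavefront restrictions, uniformly in $h$ and across all faces) is precisely the content of \cite[Section 7]{GHS2}, and that the present paper just invokes it. So your write-up is more than the paper provides: the paper gives no argument at all, while you give a reasonable heuristic sketch of why the result should hold and then defer the real work to the same citation.
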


The second fact about the microlocal wavefront set we need is that a Legendre distribution $F$ has trivial kernel, i.e. kernel in $h^\infty x^\infty {x'}^\infty C^\infty(\bbar{M}^2 \times [0, h_0])$ if and only if its microlocal support is empty. 

From these statements, it is  straightforward to obtain the following result. 

\begin{prop}\ilabel{microlocaltrivial}
Let $h_0>0$, let $i,j$ so that $Q_i,Q_j$ satisfying 2) in Lemma \ref{WFprop}, ie. $Q_i$ is not outgoing-related to $Q_j$, 
then for all $L>0$, all $q,q'\in (1,\infty)$ there is $C>0$ such that for 
all $\beta\in (1-\delta,1+\delta)$ and $h\in(0,h_0)$
\[ ||Q_i(h^2\Delta-\beta-i0)^{-1}Q_j||_{L^q\to L^{q'}}\leq Ch^{L}.\]
Similarly, if $Q_j$ not outgoing related to $Q_i$, then 
\[ ||Q_i(h^2\Delta-\beta+i0)^{-1}Q_j||_{L^q\to L^{q'}}\leq Ch^{L}.\]
\end{prop}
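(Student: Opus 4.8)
The plan is to deduce Proposition~\ref{microlocaltrivial} by combining the microlocal support calculus of Proposition~\ref{msprop} with the triviality criterion for Legendre distributions recalled just above it, together with the dichotomy of Lemma~\ref{WFprop}. Fix $i,j$ as in case~2) of Lemma~\ref{WFprop}, so that $g^t(\WF'(Q_i)) \cap \WF'(Q_j) = \emptyset$ for all $t \leq 0$; by the definition of outgoing-related operators this says precisely that $Q_i$ is not outgoing-related to $Q_j$. Set $F = (h^2\Delta - (\beta + i0))^{-1}$, which is a Legendre distribution (more precisely a Legendre conic pair, combining the diagonal/pseudodifferential part with the outgoing Legendrian relation) on $M^2_b \times [0, h_0]$, with microlocal support components given by \eqref{WFmf}, \eqref{WFbf} and \eqref{WFlbrb} with the $+$ sign. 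The goal is to show that $Q_i F Q_j$ has empty microlocal support at all four boundary faces $\mf$, $\bfc$, $\lb$, $\rb$; the triviality criterion then gives that its kernel lies in $h^\infty x^\infty {x'}^\infty C^\infty(\bbar M^2 \times [0, h_0])$, and such a kernel is $O(h^L)$ from $L^q$ to $L^{q'}$ for every $L$ and every $1 < q, q' < \infty$ (by Schur's test, say, using the rapid vanishing at the spatial boundary faces to control the integrals).

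The main step is the emptiness of the microlocal support, which I would check face by face using Proposition~\ref{msprop}. At $\mf$: by \eqref{WFmf}, $\WF'_{\mf}(F)$ is the union of the diagonal and the forward geodesic flow relation $L'_+$. A point of the diagonal surviving in $\WF'_{\mf}(Q_i F Q_j)$ would force $(z,\zeta) \in \WF'(Q_i) \cap \WF'(Q_j)$, impossible in case~2) since disjointness of the backward flowout for all $t \leq 0$ includes $t = 0$, i.e. $\WF'(Q_i) \cap \WF'(Q_j) = \emptyset$. A point of $L'_+$ surviving would give $(z', \zeta') \in \WF'(Q_j)$ and $(z, \zeta) \in \WF'(Q_i)$ with $(z,\zeta)$ on the forward geodesic ray from $(z', \zeta')$, i.e. $(z,\zeta) = g^t(z', \zeta')$ for some $t \geq 0$, hence $(z', \zeta') = g^{-t}(z, \zeta) \in g^{-t}(\WF'(Q_i))$ with $-t \leq 0$ — contradicting $g^{-t}(\WF'(Q_i)) \cap \WF'(Q_j) = \emptyset$. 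At $\bfc$: the diagonal piece is excluded as before; the cone geodesic flow relation $(L^\bfc)'_+$ is excluded by the same flow argument applied to the flow near infinity (using that $\WF'(Q_i), \WF'(Q_j)$ restricted to the boundary are carried by $\hat H_p$, and that case~2) of Lemma~\ref{WFprop} uses $g^t$ with the boundary convention); and the purely outgoing piece $\{\nu = \sqrt\beta, \nu' = -\sqrt\beta, \mu = \mu' = 0\}$ requires $(y,\nu,\mu,h) = (y, \sqrt\beta, 0, h) \in \WF'(Q_i)$, i.e. a point of $\WF'(Q_i)$ at the outgoing radial set; but this radial point is a fixed point of $\hat H_p$, so it is flowed backward to itself, and it lies in $\WF'(Q_j)$ too (the $\nu' = -\sqrt\beta$ incoming radial point is likewise fixed) — more carefully, I need to rule this out using that the outgoing radial set is forward-flowed-out from nothing except itself, so if it meets $\WF'(Q_i)$ and the incoming radial set meets $\WF'(Q_j)$, the relevant flowout condition is still violated; the cleanest way is to observe that $(L^\bfc)'_+$ together with this outgoing piece is contained in $\overline{g^t(\cdot)}$ for $t \geq 0$ of the relevant incoming data, so the argument is uniform. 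At $\lb$ and $\rb$: $\WF'_{\lb}(F) = \{x = 0, \mu = 0, \nu = \sqrt\beta\}$ is the outgoing radial set, and surviving in $\WF'_{\lb}(Q_i F Q_j)$ forces this point into $\WF'(Q_i)$; similarly $\WF'_{\rb}(F)$ is the incoming radial set and forces a point into $\WF'(Q_j)$. These are not automatically excluded by case~2) alone, so here I would invoke the last sentence of Lemma~\ref{WFprop}: all but one $Q_i$ miss the outgoing radial set and all but one miss the incoming radial set, and by arranging the partition (or by a separate argument) the pair in case~2) is not the exceptional one on the relevant side — more robustly, one notes that if $\WF'(Q_i)$ meets the outgoing radial set and $\WF'(Q_j)$ meets the incoming radial set then, tracing back along $\hat H_p$, points near these would violate case~2), so in fact case~2) forces at least one of these intersections to be empty; I would spell this out to get $\WF'_{\lb} = \WF'_{\rb} = \emptyset$ for the composition.

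The second assertion, with $(\beta - i0)^{-1}$ and the roles swapped, follows by the analogous argument: the incoming resolvent has microlocal support given by \eqref{WFmf}, \eqref{WFbf}, \eqref{WFlbrb} with the $-$ sign, i.e. governed by the \emph{backward} geodesic flow, and "$Q_j$ not outgoing-related to $Q_i$" means "$Q_i$ not incoming-related to $Q_j$", which is exactly what is needed to kill $L'_-$ and $(L^\bfc)'_-$; alternatively one can take adjoints, since $(h^2\Delta - (\beta + i0))^{-1}$ and $(h^2\Delta - (\beta - i0))^{-1}$ are formal adjoints and the $L^q \to L^{q'}$ bound is symmetric under passing to adjoints with $q, q'$ interchanged.

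The step I expect to be the main obstacle is the treatment of the radial sets: the purely outgoing/incoming piece at $\bfc$ and the boundary faces $\lb$, $\rb$ are \emph{not} excluded by the geodesic-flow disjointness in case~2) taken literally, because the radial sets are fixed points of the flow. Handling this cleanly requires using the supplementary structure of the partition (the last sentence of Lemma~\ref{WFprop} guaranteeing that the exceptional $Q_i$ meeting a radial set is unique on each side) and checking that a case~2) pair cannot be the exceptional pair on the side that matters — or, more conceptually, arguing that if $\WF'(Q_i)$ accumulates at the outgoing radial set then backward flow spreads it toward $\nu < \sqrt\beta$, and similarly $\WF'(Q_j)$ near the incoming radial set is forward-flowed toward $\nu > -\sqrt\beta$, so that the "disjoint backward flowout" hypothesis does preclude the simultaneous occurrence; making this precise (uniformly in $h$, and at the boundary) is the delicate point, but it is essentially the content of Lemma~\ref{partitioninx>eps} combined with the non-trapping convexity of $\{x \geq 2\eps\}$.
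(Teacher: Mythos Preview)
Your approach is exactly the paper's: apply Proposition~\ref{msprop} to show that all four pieces $\WF'_{\mf}$, $\WF'_{\bfc}$, $\WF'_{\lb}$, $\WF'_{\rb}$ of the composition are empty, then invoke the triviality criterion for Legendre distributions to conclude the kernel lies in $h^\infty x^\infty {x'}^\infty C^\infty(\bbar M^2\times[0,h_0])$. Your treatment of $\mf$ is correct and agrees with the paper.

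Two points deserve sharpening. For $\bfc$, the paper does not re-run the cone geodesic flow; it simply invokes \eqref{nunu'}, namely $\nu\geq\nu'$ on $(L^{\bfc})'_+$ with strict inequality off the diagonal. Since the $\bfc$ face is only seen when both $Q_i,Q_j$ are of boundary type $\tilde B_k$, and since the proof of Lemma~\ref{partitioninx>eps} shows that case~2) for such a pair forces the $\nu$-interval of $Q_i$ to lie strictly \emph{below} that of $Q_j$, the required inequality $\nu\geq\nu'$ with $(y,\nu,\mu)\in\WF'(Q_i)$, $(y',\nu',\mu')\in\WF'(Q_j)$ is immediately impossible. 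This is cleaner than identifying $\hat H_p$ with the cone flow.

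For $\lb$ and $\rb$ you have located the right obstacle but your resolution has a genuine gap. You argue that case~2) precludes the \emph{conjunction} ``$\WF'(Q_i)$ meets $R_+$ and $\WF'(Q_j)$ meets $R_-$'', which only gives ``at least one of these intersections is empty''. That is not enough: killing $\WF'_{\lb}$ needs $\WF'(Q_i)\cap R_+=\emptyset$, and killing $\WF'_{\rb}$ needs $\WF'(Q_j)\cap R_-=\emptyset$, \emph{separately}. The paper asserts both, with the parenthetical ``otherwise property~2) could not be satisfied''. The underlying argument is that each fails on its own: if $\WF'(Q_i)$ met $R_+$ then, by the last sentence of Lemma~\ref{WFprop}, $Q_i$ is the unique such operator and hence (being part of a partition of unity) its wavefront set contains a full neighbourhood of $R_+$ in the characteristic set; since under forward geodesic flow (or forward $\hat H_p$-flow at the boundary) every point of the characteristic set eventually enters any such neighbourhood, the forward flowout of $\WF'(Q_j)$ would meet $\WF'(Q_i)$, contradicting case~2). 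The symmetric argument with backward flow and $R_-$ gives $\WF'(Q_j)\cap R_-=\emptyset$ independently. This also disposes of the purely outgoing piece at $\bfc$ that you were worried about.
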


\begin{proof} For definiteness, we assume that $Q_i,Q_j$ satisfy 2) in Lemma \ref{WFprop}, and we prove the first estimate in the Proposition. The other is obtained in exactly the same way. 

We begin by recalling that by \cite[Theorem 1.1]{HW}, the incoming and outgoing resolvents are a sum of an intersecting Legendre distribution and a Legendre conic pair, so Proposition~\ref{msprop} applies. Property  2) in Lemma \ref{WFprop} together with \eqref{WFmf} and  the first line of Proposition~\ref{msprop} implies that $\WF'_{\mf}(Q_i(h^2\Delta-(\beta+i0))^{-1}Q_j)$ is empty. Similarly,   property 2),  \eqref{WFbf}, \eqref{nunu'}  and the second line of Proposition~\ref{msprop} shows that $\WF'_{\bfc}(Q_i(h^2\Delta-(\beta+i0))^{-1}Q_j)$ is empty. Finally, given the last statement in Lemma~\ref{WFprop}, we see that $Q_i$ has operator wavefront set disjoint from the outgoing radial set, and $Q_j$ has operator wavefront set disjoint from the incoming radial set (otherwise property 2) could not be satisfied). Hence, using \eqref{WFlbrb} and the last two lines of Proposition~\ref{msprop}, $\WF'_{\lb}(Q_i (h^2\Delta-(\beta+i0))^{-1}Q_j)$ and $\WF'_{\rb}(Q_i (h^2\Delta-(\beta+i0))^{-1}Q_j)$ are empty.

It follows that the kernel of $Q_i(h^2\Delta-(\beta+i0))^{-1}Q_j$ is  in $h^\infty x^\infty {x'}^\infty C^\infty(\MMb \times [0, h_0])$. The estimate now follows trivially. 
\end{proof}

Finally, we obtain 
\begin{coro}\ilabel{cor2}
Let $N,\iota$ as in Lemma \ref{partitioninx>eps} and let $\phi$ be as in Section~\ref{sec:outside-sector}. If $N$ is chosen large enough and $\iota$ small enough, 
then  for all $j,k$ with  $Q_i,Q_j$ satisfying either 1) or 2) 
in Lemma \ref{WFprop}, there is $C>0$ such that for $p=\frac{2n}{n+2}$ with $1/p+1/p'=1$
and all $\beta\in (1-\delta,1+\delta)$, 
\[ ||Q_i\phi(h^2\Delta)(h^2\Delta-\beta\pm i0)^{-1}Q_j||_{L^p\to L^{p'}}\leq Ch^{-2}.\]
\end{coro}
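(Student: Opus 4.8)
The plan is to deduce Corollary~\ref{cor2} from the two preceding results by splitting according to which of the alternatives 1) or 2) in Lemma~\ref{WFprop} holds. Recall that the only outputs we need are uniform $L^p \to L^{p'}$ bounds, and that we already have at our disposal Proposition~\ref{microlocaltrivial}, which controls $Q_i (h^2\Delta - \beta - i0)^{-1} Q_j$ when $Q_i$ is not outgoing-related to $Q_j$ (the $+i0$ case), and $Q_i (h^2\Delta - \beta + i0)^{-1} Q_j$ when $Q_j$ is not outgoing-related to $Q_i$, i.e.\ when $Q_i$ is not incoming-related to $Q_j$ (the $-i0$ case); in either case the kernel lies in $h^\infty x^\infty {x'}^\infty C^\infty(\MMb \times [0,h_0])$, hence the operator is bounded $L^q \to L^{q'}$ with an $O(h^L)$ norm for any $L$, in particular $O(h^{-2})$.

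First I would observe that $\phi(h^2\Delta)$ is a semiclassical scattering pseudodifferential operator of order $(-\infty, 0, 0)$ with operator wavefront set contained in $\{ |\xi|^2_g \in \supp \phi \}$, so $Q_i \phi(h^2 \Delta)$ is again of this type and its operator wavefront set is still contained in $\WF'(Q_i)$; moreover $\phi(h^2\Delta)$ commutes with $(h^2\Delta - \beta \pm i0)^{-1}$, so without loss of generality the factor $\phi(h^2\Delta)$ can be absorbed and we are reduced to estimating $Q_i (h^2 \Delta - \beta \pm i0)^{-1} Q_j$. (Alternatively, one keeps $\phi(h^2\Delta)$ in place and simply notes that composing a Legendre distribution with the extra pseudodifferential factor only shrinks the microlocal support further, by Proposition~\ref{msprop}, so the conclusion of Proposition~\ref{microlocaltrivial} is unaffected.)

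Next, suppose $(Q_i, Q_j)$ satisfies alternative 1) of Lemma~\ref{WFprop}, namely $g^t(\WF'(Q_i)) \cap \WF'(Q_j) = \emptyset$ for all $t \geq 0$. By the definition of the outgoing/incoming relations in Section~\ref{psido}, this says precisely that $Q_i$ is not outgoing-related to $Q_j$ when we look forward — wait, more carefully: $g^t(\WF'(Q_i)) \cap \WF'(Q_j) = \emptyset$ for $t \geq 0$ means the forward flowout of $\WF'(Q_i)$ misses $\WF'(Q_j)$, equivalently the backward flowout of $\WF'(Q_j)$ misses $\WF'(Q_i)$, i.e.\ $Q_j$ is not incoming-related to $Q_i$, equivalently $Q_i$ is not outgoing-related to $Q_j$. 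So the first estimate of Proposition~\ref{microlocaltrivial} applies to $Q_i(h^2\Delta - \beta - i0)^{-1}Q_j$, giving the desired bound for the $+i0$ resolvent, while alternative 2) ($g^t(\WF'(Q_i)) \cap \WF'(Q_j) = \emptyset$ for $t \leq 0$) is the statement that $Q_i$ is not incoming-related to $Q_j$, i.e.\ $Q_j$ is not outgoing-related to $Q_i$, so the second estimate of Proposition~\ref{microlocaltrivial} applies to $Q_i(h^2\Delta - \beta + i0)^{-1}Q_j$. In each of the two cases we therefore obtain $\| Q_i \phi(h^2\Delta)(h^2\Delta - \beta \pm i0)^{-1} Q_j \|_{L^p \to L^{p'}} \leq C h^{L}$ for the appropriate sign choice, and since $L$ may be taken larger than $-2$ this certainly gives the claimed $O(h^{-2})$ bound; for the other sign one uses Corollary~\ref{cor:diff} (the difference of the two resolvents is bounded $L^p \to L^{p'}$ uniformly, hence $O(h^{-2})$), together with the bound just obtained, to transfer the estimate.

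The step requiring the most care is the bookkeeping of which sign of $i0$ goes with alternative 1) versus alternative 2), and making sure that for the \emph{other} sign of $i0$ in each case one does not need a direct microlocal estimate but can instead fall back on the resolvent-difference bound of Corollary~\ref{cor:diff}. Concretely: if alternative 1) holds we get the estimate for $(h^2\Delta - \beta - i0)^{-1}$ directly from Proposition~\ref{microlocaltrivial}, and then $Q_i \phi(h^2\Delta)(h^2\Delta - \beta + i0)^{-1} Q_j = Q_i \phi(h^2\Delta)(h^2\Delta - \beta - i0)^{-1} Q_j + Q_i \phi(h^2\Delta)\big((h^2\Delta - \beta + i0)^{-1} - (h^2\Delta - \beta - i0)^{-1}\big)Q_j$, and the last term is $O(1)$ in $L^p \to L^{p'}$ by Corollary~\ref{cor:diff} (rescaled to the semiclassical normalization, where it reads as a uniform $O(h^{-2})$ bound, since $\alpha = h^{-2}(\beta \pm i\gamma)$ and $n(1/p - 1/2) - 1 = 0$). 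The symmetric argument handles alternative 2). Everything else is immediate. I do not anticipate any genuine obstacle here; the content is entirely in Propositions~\ref{microlocaltrivial} and the near-diagonal Corollary~\ref{cor1}, and Corollary~\ref{cor2} is just the case-split packaging, which is why the authors presumably state it with the terse remark that it follows.
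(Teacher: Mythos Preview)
Your proposal is correct and follows essentially the same strategy as the paper: use Proposition~\ref{microlocaltrivial} for one sign of $\pm i0$ and then Corollary~\ref{cor:diff} (the uniform $L^p\to L^{p'}$ bound on the resolvent difference, which becomes $O(h^{-2})$ after the semiclassical rescaling) together with the uniform $L^p$-boundedness of $Q_i,Q_j$ to obtain the other sign. The only cosmetic difference is that you absorb $\phi(h^2\Delta)$ into the pseudodifferential factor, whereas the paper instead bounds the $(1-\phi)(h^2\Delta)$ piece separately via \eqref{disttospecleq1} and subtracts; both routes are fine. One small bookkeeping slip: alternative~1) of Lemma~\ref{WFprop} actually says $Q_j$ is not outgoing-related to $Q_i$ (not the reverse), so it is the \emph{second} estimate of Proposition~\ref{microlocaltrivial}, controlling $(h^2\Delta-\beta+i0)^{-1}$, that applies there --- but since your argument is symmetric in the two signs this is harmless.
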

\begin{proof} Since $Q_i$ and $Q_j$ are zeroth order pseudodifferential operators, they are bounded on $L^p$ uniformly in $h$. Hence, using \eqref{disttospecleq1}, we see that 
\begin{equation}
\Big\| Q_i (1 - \phi)(h^2 \Delta) (h^2 \Delta - (\beta \pm i0))^{-1} Q_j \Big\|_{L^p \to L^{p'}} \leq Ch^{-2}. 
\ilabel{222}\end{equation}
Then, if $(Q_i, Q_j)$ satisfy 2), we see from Proposition~\ref{microlocaltrivial} and \eqref{222} that 
\begin{equation}
\Big\| Q_i \phi(h^2 \Delta) (h^2 \Delta - (\beta + i0))^{-1} Q_j \Big\|_{L^p \to L^{p'}}\leq C h^{-2}. 
\ilabel{333}\end{equation}
Now using the uniform boundedness of $Q_i$, $Q_j$ on $L^q$ spaces again, Corollary~\ref{cor:diff}, and Proposition~\ref{microlocaltrivial}, we see that 
\begin{equation}
\Big\| Q_i \phi(h^2 \Delta) \Big( (h^2 \Delta - (\beta + i0))^{-1} - (h^2 \Delta - (\beta - i0))^{-1} \Big) Q_j \Big\|_{L^p \to L^{p'}} \leq C 
h^{-2}. 
\ilabel{444}\end{equation}
Combining \eqref{333} and \eqref{444} we see that we also have 
\begin{equation}
\Big\| Q_i \phi(h^2 \Delta) (h^2 \Delta - (\beta - i0))^{-1} Q_j \Big\|_{L^p \to L^{p'}} \leq C h^{-2}
\ilabel{555}\end{equation}
(where we have the incoming resolvent instead of the outgoing resolvent as in \eqref{333}). 
The same argument (with incoming and outgoing reversed) works for $Q_i,Q_j$ satisfying 1). 
\end{proof}

\begin{proof}[Proof of Proposition \ref{hightoprove}]
Now Corollary \ref{cor1} and \ref{cor2} together prove \eqref{toprovelocal} which in turn proves Proposition \ref{hightoprove} 
\end{proof}

\subsection{Low frequency estimates} 
Here we use the low frequency partition of unity from Section~\ref{micpart}, and deduce estimates for the $L^p$ to $L^{p'}$ norm of $(\Delta - \alpha)^{-1}$ for $\Re \alpha \leq C$.\\ 

\noindent\textbf{The `near diagonal' estimate.} 
The estimate \eqref{assumption} is proved in \cite[Th. 1.12]{GHS2}, and Lemma~\ref{uniformonspect} finishes the proof for the near diagonal terms $Q_0 (\Delta - \alpha)^{-1} Q_0$, $Q_* (\Delta - \alpha)^{-1} Q_*$ and $Q_i (\Delta - \alpha)^{-1} Q_j$ when $|i-j| \leq 1$.\\

\noindent\textbf{The `off diagonal' estimate.} 
To prove the low energy off diagonal estimates, we begin by noting that thanks to  Proposition~\ref{PhrLind} it is only necessary to prove the estimate on the spectrum, that is, for $(\Delta-\beta \pm i0)^{-1}$ for  $0 < \beta < C$; we often write $\beta = \lambda^2$, where $\lambda > 0$.  Here we use the microlocal structure of the low energy resolvent as proved in \cite{GHS1}; the argument is entirely analogous to the argument in the high energy setting, with the main difference being that the low energy resolvent has polyhomogeneous expansions 
as $\lambda \to 0$, as opposed to the high energy resolvent which is oscillatory as $h \to 0$. Let us recall that the low energy space introduced in \cite{GH} and used in \cite{GHS1, GHS2} is a blown up version of $\bbar{M}^2 \times [0, \lambda_0]$. The space is obtained by first blowing up the codimension-$3$ corner $\pl\bbar{M}^2\x\{0\}$ and then the three codimension-$2$ corners, corresponding
to $\pl \bbar{M}\x M\x\{0\}$, $M\x \pl \bbar{M}\x\{0\}$ and $\pl \bbar{M}\x \pl\bbar{M}\x(0,\lambda_0]$.
There are four boundary hypersurfaces at $\lambda = 0$, labelled $\zf, \lbo, \rbo$ and $\bfo$ according as they arise from $M \times M \times \{ 0 \}$, $\partial \bbar{M} \times M \times \{ 0 \}$, $M \times \partial \bbar{M} \times \{ 0 \}$, or $\partial \bbar{M} \times \partial \bbar{M} \times \{ 0 \}$. The other boundary hypersurfaces are $\lb$, $\rb$ and $\bfc$ which arise from $\partial \bbar{M} \times M \times [0, \lambda_0]$, $M \times \partial \bbar{M} \times [0, \lambda_0]$ and $\partial \bbar{M} \times \partial \bbar{M} \times [0, \lambda_0]$. For the low energy resolvent, we need to keep track of the microlocal support, which lives at $\bfc$, $\lb$ and $\rb$ only and is given by \eqref{WFbf}, \eqref{WFlbrb}, together with the order of vanishing  at $\lambda = 0$, i.e. at $\zf, \lbo, \rbo$ and $\bfo$. We have the analogue of Proposition~\ref{msprop} for zeroth order pseudodifferential operators $Q, Q'$ in the low energy case (see \cite[Section 6]{GHS2}), together with the fact that $QFQ'$ is conormal at each boundary hypersurface at $\lambda = 0$, with the same order of vanishing there as $F$.

\begin{prop}\ilabel{microlocaltrivial-le}
Let $(Q_0, Q_*, Q_1, \dots, Q_N)$ denote the low energy partition of unity constructed in Section~\ref{sec:mpou}. Suppose that $Q_i,Q_j$ satisfy 2) in Lemma \ref{WFprop-lowenergy}, ie. $Q_i$ is not outgoing-related to $Q_j$. Then there is $C>0$ such that for 
all $0 < \beta=\la^2 < C$, 
\[ ||Q_i(\lambda)(\Delta-(\beta+i0))^{-1}Q_j(\lambda)||_{L^p\to L^{p'}}\leq C.\]
Similarly, if $Q_j$ not outgoing related to $Q_i$, then 
\[ ||Q_i(\lambda)(\Delta-(\beta-i0))^{-1}Q_j(\lambda)||_{L^p\to L^{p'}}\leq C.\]
\end{prop}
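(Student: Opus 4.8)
The plan is to imitate the proof of Proposition~\ref{microlocaltrivial} in the low-energy setting, using the microlocal description of the low-energy resolvent from \cite{GHS1} in place of the high-energy Legendre structure from \cite{HW, GHS1}. As in the high-energy case, I would assume for definiteness that $(Q_i, Q_j)$ satisfy alternative 2) in Lemma~\ref{WFprop-lowenergy}, so that $Q_i$ is not outgoing-related to $Q_j$, and prove the first estimate; the second (incoming) estimate follows by the same argument with the roles reversed. Recall from the discussion preceding the statement that the low-energy resolvent, composed on both sides with zeroth-order pseudodifferential operators in the low-energy calculus, is a distribution whose microlocal support lives only at the faces $\bfc$, $\lb$ and $\rb$ (given by the same formulas \eqref{WFbf}, \eqref{WFlbrb} as in the high-energy case, with $h$ replaced by $\lambda$), and which is conormal at the faces $\zf, \lbo, \rbo, \bfo$ at $\lambda = 0$ with a fixed order of vanishing inherited from the resolvent itself.

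The key steps are as follows. First, invoke the composition result (the low-energy analogue of Proposition~\ref{msprop}, from \cite[Section 6]{GHS2}) to intersect each component of the microlocal support of $(\Delta - (\beta + i0))^{-1}$ with the operator wavefront sets $\WF'(Q_i)$ on the left and $\WF'(Q_j)$ on the right. Second, use alternative 2) in Lemma~\ref{WFprop-lowenergy} — namely that the backward geodesic flowout of $\WF'(Q_j)$ misses $\WF'(Q_i)$ — together with the monotonicity of $\nu$ along the cone geodesic flow \eqref{nunu'}, to conclude that $\WF'_\bfc$ of the composition is empty: a point in $(L^\bfc)'_+$ requires $\nu \geq \nu'$, which on the relevant microsupports would force a forward-flow relation between $\WF'(Q_j)$ and $\WF'(Q_i)$, contradicting 2); and the purely incoming/outgoing piece of \eqref{WFbf} is excluded because, by the last sentence of Lemma~\ref{WFprop-lowenergy}, $Q_i$ may be taken with wavefront set disjoint from the outgoing radial set $\{x = 0, \mu = 0, \nu = +\sqrt\beta\}$ and $Q_j$ disjoint from the incoming radial set $\{x = 0, \mu = 0, \nu = -\sqrt\beta\}$. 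Third, the same radial-set disjointness together with \eqref{WFlbrb} shows $\WF'_\lb$ and $\WF'_\rb$ of the composition are empty. Hence the composition $Q_i (\Delta - (\beta + i0))^{-1} Q_j$ has empty microlocal support and is therefore conormal with positive order of vanishing at every boundary hypersurface of the low-energy space; in particular its Schwartz kernel is bounded by $C (xx')^N$ for some fixed $N > 0$, uniformly for $0 < \beta < C$. Fourth and finally, Schur's test (or direct integration against the $x^n$-type volume form, using the polynomial volume growth of asymptotically conic manifolds) shows that such a kernel defines a bounded map $L^p \to L^{p'}$, indeed from $L^q$ to $L^{q'}$ for all $q, q' \in (1,\infty)$, with norm uniform in $\beta$.

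The main point requiring care — the analogue of the only substantive step in the proof of Proposition~\ref{microlocaltrivial} — is verifying that the monotonicity argument at $\bfc$ actually closes, i.e. that alternative 2) of Lemma~\ref{WFprop-lowenergy}, which is phrased purely in terms of the values of $\nu$ on $\WF'(Q_i)$ and $\WF'(Q_j)$ (namely $\sup_{\WF'(Q_i)} \nu > \sup_{\WF'(Q_j)} \nu$ is \emph{not} the case, etc.), is strong enough to exclude the cone-geodesic relation $(L^\bfc)'_+$ after intersecting with the operator wavefront sets. This works because $\nu$ is strictly increasing along non-stationary segments of the cone geodesic flow by \eqref{nudot}, so a nontrivial point of $(L^\bfc)'_+$ over $\WF'(Q_i) \times \WF'(Q_j)$ would have $\nu|_{\WF'(Q_i)} > \nu|_{\WF'(Q_j)}$, which is exactly what alternative 2) forbids (it gives alternative 1), a reversed strict inequality, the condition for $Q_i$ not outgoing-related). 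A subtlety worth a sentence: one should also handle the "diagonal" piece of $\WF'_\bfc$ and $\WF'_\mf$ — but since $Q_i, Q_j$ satisfy 2) and not 3), their wavefront sets are disjoint, killing the diagonal contributions as well, so there is genuinely nothing left. Everything else is bookkeeping, identical in form to the high-energy proof, with "oscillatory as $h \to 0$" replaced throughout by "polyhomogeneous as $\lambda \to 0$."
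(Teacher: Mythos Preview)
Your microlocal analysis (the first five steps) is essentially the paper's argument, and in fact you go slightly further by also killing $\WF'_{\lb}$ and $\WF'_{\rb}$ via radial-set disjointness, whereas the paper is content to leave microsupport there. (There is some confusion in your discussion of which alternative, 1) or 2), forbids $\nu|_{Q_i} > \nu|_{Q_j}$ --- alternative 2) in Lemma~\ref{WFprop-lowenergy} \emph{asserts} precisely this inequality, so it is alternative 1) that would kill $(L^\bfc)'_+$. This is likely a labeling inconsistency already present in the paper's statement; it does not affect the structure of the argument.)

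The genuine gap is in your last two steps. Unlike the high-energy case, empty microlocal support at $\bfc, \lb, \rb$ does \emph{not} give infinite-order decay everywhere: the composition inherits from the resolvent only order $n-2$ at the $\lambda=0$ face $\bfo$ (the paper emphasizes this: ``$QFQ'$ is conormal at each boundary hypersurface at $\lambda=0$, with the same order of vanishing there as $F$''). Working in coordinates $\rho=x/\lambda$, $\rho'=x'/\lambda$ one finds that a $\lambda$-uniform bound $|K|\le C(xx')^N$ forces $2N\le n-2$, so at best $N=(n-2)/2$. But this is exactly the borderline: $\|x^{(n-2)/2}\|_{L^{p'}(M)}^{p'}\sim\int_0^{x_0}x^{-1}\,dx=\infty$, so neither Schur nor the rank-one factoring you suggest yields an $L^p\to L^{p'}$ bound. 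The kernel $(xx')^{(n-2)/2}$ is \emph{not} a bounded operator $L^p\to L^{p'}$.

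The paper avoids this by \emph{not} seeking a $\lambda$-uniform bound in $(x,x')$. Instead it introduces the ``acceptable'' envelope
\[
\lambda^{n-2}\,(1+\lambda/x)^{-(n-1)/2}\,(1+\lambda/x')^{-(n-1)/2},
\]
records that $(1+\lambda/x)^{-1}$ is a product of defining functions for $\bfc$ and $\lb$, and checks directly that this function lies in $L^{p'}(M\times M)$ uniformly in $\lambda$. Your kernel, having infinite order at $\bfc,\lb,\rb$ and order $n-2$ at $\bfo$, is dominated by this envelope (the exponent $(n-1)/2$ is what makes the $L^{p'}$ integral converge, with exactly one half-power to spare over the failing $(n-2)/2$). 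So the fix is to replace your steps 6--7 by: ``the kernel is acceptable in the sense of \eqref{acc}, hence bounded $L^p\to L^{p'}$''.
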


\begin{remark}\ilabel{endpointlow} We can replace $p$ by any $q \in [1, 2n/(n+1))$ here, and then the right hand side becomes $|\beta|^{n(1/q - 1/2) - 1}$.
\end{remark}

\begin{proof}
In this proof, it will be convenient to use the following terminology: we will call a kernel `acceptable' if it is bounded by a constant times 
\begin{equation}
\lambda^{n-2} \big( 1 + \frac{\lambda}{x} \big)^{-(n-1)/2} \big( 1 + \frac{\lambda}{x'} \big)^{-(n-1)/2}.
\ilabel{acc}\end{equation}
A straightforward computation shows that this kernel has uniformly bounded $L^{p'}$ norm on $M \times M$ (recall that the measure looks like $x^{-(n+1)}dx \, dy$ in each factor of $M$, for small $x$). Therefore, any acceptable kernel is  uniformly  bounded as a map $L^p(M) \to L^{p'}(M)$.  More generally, as a map from $L^q(M)$ to $L^{q'}(M)$ with $q\in[\frac{2n}{n+2},\frac{2(n+1)}{n+3}]$,
and $1/q+1/q'=1$, the norm is $\mc{O}(\lambda^{2n(\frac{1}{q}-\demi)-2})$.

The resolvent kernel vanishes to order $0$ at the $\zf$ face and order $n-2$ at all the other boundary hypersurfaces at $\lambda = 0$ \cite[Theorem 3.9]{GHS1} (note that in \cite{GHS1}, we have $\nu_0 = n/2 - 1$; also note that the density half-bundle in Theorem 3.9 differs from the Riemannian density bundle by factors of $\rho_{\lbo}^{n/2} \rho_{\rbo}^{n/2} \rho_{\bfo}^n$). Moreover it vanishes to order $(n-1)/2$ at the left and right boundaries.  Note that in our partition $Q (\Delta - \beta \pm i0)^{-1} Q'$, only the term $Q_0 (\Delta - \beta \pm i0)^{-1} Q_0$ has support meeting the $\zf$ face and is a `near-diagonal' term, so in analyzing the off-diagonal terms we can ignore the vanishing order at this face; all off-diagonal terms are $\mc{O}(\lambda^{n-2})$ at $\lambda = 0$. 

Consider first the compositions  $Q_0 (\Delta-\beta \pm i0)^{-1}Q'$ and $Q'(\Delta-\beta \pm i0)^{-1}Q_0$, where $Q' \neq Q_0$. These kernels  vanish in a neighbourhood of $\bfc$. Thus, we are only left with the expansions at the left or right boundaries, together with the hypersurfaces at $\lambda = 0$ excluding $\zf$. Observe that $(1 + \lambda/x)^{-1}$ is a product of boundary defining functions for $\bfc$ and the left boundary, while $(1 + \lambda/x')^{-1}$ is a product of boundary defining functions for $\bfc$ and the right boundary. Since the kernels $Q_0 (\Delta-\beta \pm i0)^{-1}$ and $(\Delta-\beta \pm i0)^{-1}Q_0$  vanish to order $(n-1)/2$ at the left and right boundaries, to order $\lambda^{n-2}$ at $\lambda = 0$ and to infinite order at $\bfc$, they are bounded by a constant times \eqref{acc} and hence acceptable. 
Also, as noted in \cite{GHS2}, the operators $Q_*$ and $Q_i$ are bounded on $L^q$ uniformly in $\lambda$, for $1 < q < \infty$, so we see that all terms  $Q_0 (\Delta-\beta \pm i0)^{-1} Q_j$ and $Q_j(\Delta-\beta \pm i0)^{-1}Q_0$, $j \in \{ *, 1, 2, \dots, N \}$, are uniformly bounded from $L^p$ to $L^{p'}$. 

Next consider the terms $Q_* (\Delta-\beta \pm i0)^{-1}Q'$ and $Q'(\Delta-\beta \pm i0)^{-1}Q_*$. We use 
Proposition~\ref{msprop} to see that this form of sandwiching of the resolvent wipes out the piece $(L^{\bfc})'_{\pm}$ of the microlocal support completely, so the microsupport is contained in the diagonal  (only possible for $Q' = Q_0$) together with the left or right boundaries. Correspondingly, this term is the sum of a pseudodifferential operator in the calculus $\Psi^0_k(M, \Omega_{k,b}^{1/2})$ plus an acceptable term, both of which are uniformly bounded on $L^p$. 

Finally we consider terms of the form $Q_i (\Delta-\beta + i0)^{-1} Q_j$ where $i, j \geq 1$ and where  $Q_i$ is not outgoing-related to $Q_j$.  Using Proposition~\ref{msprop}, this  sandwiching of the resolvent again wipes out the piece $(L^{\bfc})'_{\pm}$ of the microlocal support completely, so again the microsupport is contained in the left or right boundaries.  Hence these terms are acceptable and uniformly bounded on $L^p$.   
\end{proof}

The low-energy estimates are completed by proving the analogue of Corollary~\ref{cor2} in the low-energy setting. Since the argument is identical to the high-energy case, we omit the details.

\appendix

\section{A positive commutator approach for the off-diagonal estimate}\ilabel{sec:alternative}
In this section we outline an alternative approach to the off-diagonal estimates in Proposition~\ref{microlocaltrivial}, which does not use the Legendre structure of the spectral measure. Instead, we use positive commutator estimates in the spirit of \cite{Mel} and especially \cite{VaZw}. In fact, our estimates are essentially  localized versions of the following global commutator estimate from  \cite{VaZw}:

\begin{prop}{\bf [Vasy-Zworski]} \ilabel{VaZw}
Let $(M,g)$ be a non-trapping asymptotically conic manifold. Let $h_0>0$, then for each $\eps>0$ small, 
there exists $C>0$ such that for all $h\in(0,h_0)$, all $\gamma>0$ small and all $f\in x^{1/2 + \eps}L^2(M)$
\begin{equation}
\big\| \big( h^2 \Delta - 1 \pm i\gamma\big)^{-1} f \big\|_{x^{-1/2 - \epsilon} L^2(M)} \leq C h^{-1} \| f \|_{x^{1/2 + \epsilon}L^2(M)}.
\ilabel{VZest}\end{equation} 
\end{prop}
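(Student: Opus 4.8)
The plan is to prove Proposition~\ref{VaZw} by a semiclassical positive commutator argument in the scattering calculus, in the spirit of \cite{Mel} and especially \cite{VaZw}. First one reduces to an a priori estimate: it suffices to show that if $u \in x^{-1/2-\epsilon}L^2(M)$ solves $(h^2\Delta - 1 \pm i\gamma)u = f$ with $f \in x^{1/2+\epsilon}L^2(M)$, then $\|u\|_{x^{-1/2-\epsilon}L^2} \leq Ch^{-1}\|f\|_{x^{1/2+\epsilon}L^2}$ uniformly in $h \in (0,h_0)$ and in $\gamma > 0$ small; existence, uniqueness, and the limit $\gamma \to 0$ then follow from the standard limiting absorption machinery of \cite{Mel}.

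One microlocalizes $u$ over ${}^{\SC}\bbar T^*\bbar M$ into three regions. On the elliptic set $\{|p-1| \geq c\}$ (including fibre infinity), $h^2\Delta - 1 \pm i\gamma$ is elliptic in the scattering calculus, so an elliptic parametrix gives $\|\mathrm{Op}_h(\chi_{\mathrm{ell}})u\|_{x^{-1/2-\epsilon}L^2} \leq Ch\|f\|_{x^{1/2+\epsilon}L^2} + O(h^\infty)$. On a neighbourhood of $\Char(h^2\Delta - 1) = \{p = 1\}$ away from the two radial sets $R_\pm = \{x = 0, \mu = 0, \nu = \pm 1\}$, the operator is of real principal type and one propagates microlocal $x^{-1/2-\epsilon}L^2$ bounds along the bicharacteristic flow $g^t$; the non-trapping assumption guarantees that every bicharacteristic in $\Char$ runs from a neighbourhood of $R_-$ to a neighbourhood of $R_+$, so all such estimates can be transported back to the radial sets. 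At $R_\pm$ one uses a radial-point estimate: the relevant weight exponent $-1/2-\epsilon$ lies strictly below the threshold $-1/2$, so one can propagate an estimate \emph{out of} $R_\mp$ and \emph{into} $R_\pm$, the admissible direction being fixed by the sign of $\gamma$ (equivalently by the choice of $\pm i0$), since $\pm i\gamma$ contributes a commutator term of definite sign that cooperates at $R_+$ for the $+i\gamma$ resolvent and at $R_-$ for $-i\gamma$. This is the same dichotomy underlying Proposition~\ref{microlocaltrivial}.

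Concretely, the commutant is $A = \mathrm{Op}_h(a)$ with $a = \phi(p)\, b(\nu)\, x^{-2\epsilon}$ near $\pl\bbar M$, where $\phi$ localizes to $\{p \approx 1\}$ and $b$ is monotone in $\nu$; using $\dot\nu \geq 0$ along the flow near infinity (cf. \eqref{nudot}, \eqref{hatHp}) together with the sign of $\pl_x(x^{-2\epsilon})$, the symbol $H_p a$ is a positive multiple of $x^{-1-2\epsilon}$ on $\Char$ near infinity. In the compact part $\{x \geq \eps/2\}$ one glues in a standard non-trapping escape function $a_0$ with $H_p a_0 < 0$ on $\Char \cap \{x \geq \eps/2\}$, which exists precisely because there are no trapped geodesics. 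With signs arranged consistently, $\frac{i}{h}[h^2\Delta - 1, A]$ has principal symbol of one sign on $\Char$ modulo terms supported in the elliptic region and a controlled boundary term, while the contribution of $\pm i\gamma$ to $\frac{i}{h}[h^2\Delta - 1 \pm i\gamma, A]$ and to $2\Re\cjg (h^2\Delta - 1 \pm i\gamma)u, Au\cjd$ has the favourable sign near the appropriate radial set. Pairing $\Im\cjg f, Au\cjd = \Im\cjg (h^2\Delta - 1 \pm i\gamma)u, Au\cjd$, integrating by parts, and using Cauchy--Schwarz to absorb $\cjg f, Au\cjd$ into $\tfrac12\|x^{-1/2-\epsilon}u\|^2 + C\|x^{1/2+\epsilon}f\|^2$, yields the estimate; the commutator being $O(h)$ produces exactly the $h^{-1}$ loss.

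The main obstacle is twofold. First, closing the positive commutator argument \emph{globally} and \emph{uniformly in $h$ and $\gamma$}: this is where non-trapping is used essentially, both to construct the escape function over the compact region and to match it smoothly to the monotone-in-$\nu$ commutant near infinity, and one must then check that every error term is genuinely $o(1)$ times the main term or bounded by $C\|f\|_{x^{1/2+\epsilon}L^2}^2$. Second, the radial-point analysis at $R_\pm$ with the borderline weights $x^{\pm(1/2+\epsilon)}$: the $\epsilon$ is exactly what places one on the solvable side of the threshold, and one must verify that the sign of $\gamma$ selects that side; this is the technically most delicate point and is precisely where the incoming/outgoing distinction enters. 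Since the statement is quoted from \cite{VaZw}, one could instead simply cite it, but the above is the argument one would reconstruct.
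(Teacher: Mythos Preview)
The paper does not give a proof of this proposition; it is simply quoted from \cite{VaZw} as a known result (the attribution ``[Vasy--Zworski]'' in the header signals this, and the sentence introducing it reads ``our estimates are essentially localized versions of the following global commutator estimate from \cite{VaZw}''). Your final sentence already anticipates this possibility, and indeed that is exactly what the paper does.

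Your outline is a reasonable reconstruction of the argument, though two remarks are in order. First, the original \cite{VaZw} argument and the paper's own localized versions (Lemmas~\ref{lem:poscomm1} and~\ref{lem:poscomm2}) proceed via a single global commutant of the form $q = x^{-1/2+\epsilon}\tilde\rho(x)\tilde\phi(p)\chi(\nu)$ together with the identity \eqref{Hpq}, rather than the three-region microlocal decomposition (elliptic / principal type / radial points) you describe; the latter is closer to the later Vasy framework. The content is the same, but if you are matching the paper's presentation you would follow the global-escape-function route, which is what your ``Concretely'' paragraph actually does. Second, your threshold discussion at the radial sets is slightly muddled: you assert that the single weight $-1/2-\epsilon$ being below threshold allows propagation both \emph{out of} one radial set and \emph{into} the other, but in the source/sink picture these require opposite threshold inequalities. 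In the positive-commutator approach actually used, this issue does not arise in that form: for $\gamma>0$ the absorption term $2\gamma\|Qu\|^2$ carries a definite sign, and the role of $\epsilon>0$ is to make $H_p(x^{-1+2\epsilon})$ strictly negative near the boundary (via \eqref{Hpq}), which is what closes the estimate uniformly as $\gamma\to 0$.
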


We give two lemmas, the first of which is needed in the proof of the second. The desired off-diagonal estimates follow immediately from the second lemma. Although we want to use them for non-trapping metrics, 
we state them in a general setting where the geodesic flow can have trapped trajectories, since we believe that 
these Lemmas could be useful for related problems in trapping situations. Lemma \ref{lem:poscomm2} below can be seen as a propagation of 
singularities result. We recall that the \emph{forward trapped set} $\Gamma_+$
(resp. \emph{backward trapped set} $\Gamma_-$) is the closure of of the set of points $\zeta\in T^*M$ such that $g^t(\zeta)$ belongs
to a compact set for all $t\geq 0$ (resp. all $t\leq 0$).

\begin{lemma}\ilabel{lem:poscomm1} 
Let $(M,g)$ be an aymptotically conic manifold and assume that there exists $J\geq 1$ such that 
for all small $\eps>0$ there is $C>0, h_0>0$ such that for all $h\in(0,h_0)$, all small $\gamma>0$ 
and all $f\in x^{-1/2 - \epsilon} L^2(M)$
\begin{equation}\ilabel{assume}
 \big\| \big( h^2 \Delta - 1 \pm i\gamma\big)^{-1} f \big\|_{x^{-1/2 - \epsilon} L^2(M)} \leq C h^{-J} \| f \|_{x^{1/2 + \epsilon}L^2(M)}.
 \end{equation}
Let $K > 0$, and suppose that $A \in \Psi^{0, 0, 0}(M)$ satisfies 
$\WF'(A)\cap {^\SC T}_{\pl\bbar{M}}^*\bbar{M}\cap \{\nu=\pm 1,\mu=0\}=\emptyset$, then there exists $C'>0$ such that 
for all $h\in(0,h_0)$ and all $f \in x^{-K} L^2(M)$,
\[ \big\| \big( h^2 \Delta - (1 \mp i\gamma)\big)^{-1} Af \big\|_{ x^{-K-1} L^2(M)} \leq C h^{-J} \| f \|_{x^{-K}L^2(M)}.\]
\end{lemma}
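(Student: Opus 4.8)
The plan is to prove this by a positive commutator argument in the spirit of Melrose \cite{Mel} and Vasy--Zworski \cite{VaZw}, using the hypothesis \eqref{assume} as the only a priori input; the extra power of $x$ (compared with the $\epsilon$-loss in \eqref{VZest}) comes from the strict monotonicity $\hat H_p\nu=2|\mu|^2_{h_0}=2(1-\nu^2)>0$ of the boundary flow on $\Char(h^2\Delta-1)\cap{}^{\SC}T^*_{\partial\bbar M}\bbar M$ away from the radial points, together with the fact that the spatial weight one uses vanishes at $\partial\bbar M$. I treat the outgoing resolvent $(h^2\Delta-(1-i\gamma))^{-1}$; the incoming case is symmetric after reversing the direction of the flow. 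Recall that $\WF'(A)$ is disjoint from both radial sets $R_\pm=\{x=0,\mu=0,\nu=\pm1\}$. First I would reduce by density to $f\in\dot C^\infty(\bbar M)$, so that for $\gamma>0$ fixed the operator $h^2\Delta-(1-i\gamma)$ is invertible in the scattering calculus and $u:=(h^2\Delta-(1-i\gamma))^{-1}Af\in x^{-K}L^2(M)$ (with a bound allowed to depend on $h,\gamma$), which legitimises the pairings below. Where $\WF'(A)$ does not meet $\Char(h^2\Delta-1)$ microlocal ellipticity gives $(h^2\Delta-(1-i\gamma))^{-1}A\in\Psi^{-2,0,0}(M)$, bounded $x^{-K}L^2\to x^{-K}L^2\subset x^{-K-1}L^2$ uniformly, so one may assume $\WF'(A)$ lies in a small neighbourhood of the characteristic set.

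Next I would run the commutator estimate with $\Lambda=B^*B$, $B={\rm Op}_h(b)$, $b=x^{K+\demi}\,e^{\beta\nu}\,\chi$, where $\beta>0$ is a large constant and $\chi$ is a symbol of order $0$, compactly supported in the fibres near the characteristic set, equal to $1$ near $\WF'(A)$, and adapted to the geodesic flow so that $H_p\chi$ has the favourable sign except near $R_\pm$ and, in the trapping case, near the forward trapped set $\Gamma_+$. The spatial factor $x^{2K+1}$ of $\sigma(\Lambda)$ vanishes to positive order at $\partial\bbar M$. Pairing $(h^2\Delta-(1-i\gamma))u=Af$ against $\Lambda u$ and taking imaginary parts gives
\[
{\rm Im}\,\langle Af,\Lambda u\rangle=\tfrac h2\big\langle {\rm Op}_h\big(H_p(x^{2K+1}e^{2\beta\nu}\chi^2)\big)u,u\big\rangle+\gamma\langle\Lambda u,u\rangle+O(h^2)\,\|u\|^2,
\]
with the $\gamma$-term of the good sign. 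Using $H_p=x\hat H_p+x^2V$ and the formula for $\hat H_p$, on $\Char(h^2\Delta-1)\cap\{x=0\}$ the leading part of $H_p(x^{2K+1}e^{2\beta\nu})$ is $x^{2K+2}e^{2\beta\nu}\big(4\beta(1-\nu^2)-2(2K+1)\nu\big)$, which for $\beta$ large is $\ge c\,x^{2K+2}e^{2\beta\nu}>0$ there, because $\WF'(A)$ avoiding $R_\pm$ forces $|\nu|\le\nu_0<1$ along the relevant part of the flow. Hence $H_p(x^{2K+1}e^{2\beta\nu}\chi^2)\ge c\,g^2-e$, with $g$ elliptic of spatial order $x^{K+1}$ on (the flowout of) $\WF'(A)$ minus neighbourhoods of $R_\pm$ and $\Gamma_+$, and $e$ collecting the $\chi$-derivative terms and lower-order contributions.

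Then I would control the errors and close the estimate. Near $R_\pm$ the symbol $x^{2K+1}e^{2\beta\nu}\chi^2$ and all its $H_p$-derivatives vanish like $x^{2K+1}$, so that contribution is $\lesssim\|x^{K+1/2}(\text{mic near }R_\pm)u\|^2\lesssim\|u\|^2_{x^{-1/2-\epsilon}L^2,\ \text{near }R_\pm}$ (take $\epsilon<K$); the standard radial-point estimates of \cite{VaZw} at the source $R_-$ (which has no input here, $\WF'(A)$ avoiding $R_-$, so $u$ is $O(h^\infty)$ there) and at the sink $R_+$ bound this by $\delta\|u\|^2_{x^{-K-1}L^2}+C_\delta h^{-2J}\|f\|^2_{x^{-K}L^2}$ for any $\delta>0$. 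Near $\Gamma_+$, where $x$ is bounded below so weighted and unweighted $L^2$ coincide, the contribution is $\lesssim\|u\|^2_{L^2,\ \text{near }\Gamma_+}$; writing $f=f\,\mathbb 1_{x\ge\epsilon_1}+f\,\mathbb 1_{x<\epsilon_1}$, the first part has input compactly supported in the interior, hence in $x^{1/2+\epsilon}L^2$ with norm $\lesssim\|f\|_{x^{-K}L^2}$, and is controlled by \eqref{assume}, while the forward flowout of the second part stays near $\partial\bbar M$ (the locus $\{x=0\}$ being invariant under $g^t$) and so contributes nothing near $\Gamma_+$ modulo $O(h^\infty)$. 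Finally, Cauchy--Schwarz with parameter $\theta\sim h$ and the splitting $x^{2K+1}=x^K\cdot x^{K+1}$ gives $|{\rm Im}\,\langle Af,\Lambda u\rangle|\le \tfrac Ch\|f\|^2_{x^{-K}L^2}+\tfrac c4\,h\,\|u\|^2_{x^{-K-1}L^2,\ \text{near }\WF'(A)}$, using $\|Af\|_{x^{-K}L^2}\lesssim\|f\|_{x^{-K}L^2}$. Absorbing the $h\|u\|^2$- and $O(h^2)\|u\|^2$-contributions into the main positive term $c\,h\|gu\|^2\gtrsim c\,h\|u\|^2_{x^{-K-1}L^2}$ (on the flowout minus the neighbourhoods of $R_\pm,\Gamma_+$) and using $J\ge1$ yields $\|u\|_{x^{-K-1}L^2}\le C h^{-J}\|f\|_{x^{-K}L^2}$, uniformly in $h\in(0,h_0)$ and small $\gamma>0$.

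The main obstacle is the construction and error analysis of the commutant: one must choose the exponential weight $e^{\beta\nu}$, the flow-adapted cutoff $\chi$, and — crucially — the $\partial\bbar M$-vanishing spatial weight $x^{K+\demi}$ so that the commutator is positive and elliptic along the flow through $\WF'(A)$ while every remainder is confined to a region where either the decay of the spatial weight at $\partial\bbar M$ beats the worst admissible growth of $u$ (bounded only by the hard-direction radial estimate at the sink $R_+$), or $x$ is bounded below and \eqref{assume} applies after separating $f$ near and far from $\partial\bbar M$; carrying this out uniformly in $h$ and $\gamma$, with only the non-uniform a priori regularity supplied by $\gamma>0$, is where the work lies. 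In the non-trapping case $\Gamma_+=\emptyset$ and only the radial-set error survives, so the argument is considerably simpler.
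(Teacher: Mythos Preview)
Your approach differs substantially from the paper's, and as written it has a genuine gap.

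The paper's proof begins by \emph{dualizing}: instead of estimating $(h^2\Delta-(1\mp i\gamma))^{-1}A$ on $x^{-K}L^2$, it proves that $A^*(h^2\Delta-(1\pm i\gamma))^{-1}$ maps $x^{K+1}L^2$ to $h^{-J}x^{K}L^2$. This is the crucial move. In the dual problem the input $f$ lies in $x^{K+1}L^2\subset x^{1/2+\epsilon}L^2$, so the hypothesis \eqref{assume} applies directly and gives the a~priori bound $u=(h^2\Delta-(1\pm i\gamma))^{-1}f\in h^{-J}x^{-1/2-\epsilon}L^2$. All lower-order remainders $R,R_1,R_2$ in the commutator argument live in $\Psi^{-\infty,1+2\epsilon,0}$ (for the base step) and are controlled by this a~priori bound. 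The paper then uses a very simple commutant $q=x^{-K-1/2}\tilde\rho(x)\tilde\phi(p)\chi(\nu)$ with $\chi$ a function of $\nu$ \emph{only}, supported in $\{\nu\le\nu_1<1\}$; because of the choice $\chi'\le -c_1\chi$ and the sign of the spatial weight, $H_p(q^2)$ has a good sign \emph{everywhere} on its support, with no cutoff errors to absorb. Finally the paper \emph{iterates}: starting from $K=-\epsilon$ (which is just \eqref{assume}) and gaining at most $1/2$ in the weight at each step, the errors at step $K+l$ have spatial order $\ge -2K$ and are controlled by the inductive hypothesis.

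Your direct argument cannot use \eqref{assume} at all: the input $Af$ lies only in $x^{-K}L^2$, not in $x^{1/2+\epsilon}L^2$, so the sole hypothesis of the lemma never gives you an $h$-uniform bound on $u$ in any weighted space. Your attempts to work around this do not close. The claim that the cutoff error near $R_+$ is $\lesssim\|u\|_{x^{-1/2-\epsilon}L^2,\text{ near }R_+}^2$ and that ``standard radial-point estimates'' then bound this by $\delta\|u\|_{x^{-K-1}L^2}^2+C_\delta h^{-2J}\|f\|_{x^{-K}L^2}^2$ is not right: $\|u\|_{x^{-1/2-\epsilon}L^2}$ is a \emph{stronger} norm than $\|u\|_{x^{-K-1}L^2}$ near $x=0$, and the below-threshold radial estimate at the sink propagates control \emph{into} $R_+$ at the \emph{same} weight from an annulus, not the reverse. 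Your splitting $f=f\mathbb{1}_{x\ge\epsilon_1}+f\mathbb{1}_{x<\epsilon_1}$ is aimed only at the $\Gamma_+$ error and does nothing for the $R_+$ error; moreover the propagation statement ``the forward flowout of the second part stays near $\partial\bbar M$'' presupposes exactly the kind of $h$-uniform propagation estimate you are trying to prove. Finally, the whole discussion of the forward trapped set $\Gamma_+$ is extraneous: this lemma carries no trapping hypothesis whatsoever (that enters only in the next lemma), and the paper's proof never mentions it.

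In short, the missing idea is: pass to the adjoint so that the input decays, use \eqref{assume} once to get global a~priori control, and bootstrap the weight $K$ in increments of $\le 1/2$ with a commutant depending on $\nu$ alone.
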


\begin{lemma}\ilabel{lem:poscomm2} 
Let $(M,g)$ be an aymptotically conic manifold and assume an a priori tempered estimate \eqref{assume}
for the resolvent. Let $A_1,A_2\in \Psi^{0,0,0}(M)$ such that:\\
1) $A_1$ is not outgoing-related to $A_2$, \\
2) $\WF'(A_1)$ does not intersect the backward trapped set $\Gamma_-$,\\
3) $\WF'(A_2)\cap {^\SC T}^*_{\pl\bbar{M}}\bbar{M}\cap \{\nu=-1,\mu=0\}=\emptyset$\\
4) $\WF'(A_1)\cap {^\SC T}^*_{\pl\bbar{M}}\bbar{M}\cap \{\nu=1,\mu=0\}=\emptyset$.\\
Then for any $L,K,K'\geq 0$, there is $C>0$ such that for any $f \in x^{-{K'}} L^2(M)$, all $\gamma>0$ small and all $h\in(0,h_0)$
\begin{equation}
  ||A_1(h^2\Delta-(1+i\gamma))^{-1}A_2f ||_{x^KL^2(M)}\leq Ch^{L}||f||_{x^{-K'}L^2(M)}.
\ilabel{A3}\end{equation}
\end{lemma}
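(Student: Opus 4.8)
The plan is to prove \eqref{A3} by a semiclassical propagation of singularities argument for the outgoing resolvent $R(h):=(h^2\Delta-(1+i\gamma))^{-1}$, in the spirit of \cite{Mel,VaZw}, using \eqref{assume} and Lemma~\ref{lem:poscomm1} as a priori inputs. Set $u:=R(h)A_2f$, so $(h^2\Delta-(1+i\gamma))u=A_2f$. The heart of the matter is the claim that, uniformly for $h\in(0,h_0)$ and $\gamma>0$ small, the semiclassical wavefront set $\WF'(u)$ is disjoint from $\WF'(A_1)$. Granting this, since $A_1\in\Psi^{0,0,0}(M)$ and $u$ is tempered, for every $N$ and $K$ there exist $M$ and $C$ with $\|A_1u\|_{x^KL^2}\leq Ch^N\|u\|_{x^{-M}L^2}$; and applying Lemma~\ref{lem:poscomm1} to the part of $A_2$ microsupported away from the radial sets $\{\nu=\pm1,\mu=0\}$, together with the known weighted mapping properties of the outgoing resolvent near the outgoing radial set \cite{Mel,VaZw} for the remaining part, one sees that $u$ is tempered with $\|u\|_{x^{-M}L^2}\leq Ch^{-J'}\|f\|_{x^{-K'}L^2}$ for suitable $M,J'$. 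Combining gives \eqref{A3} with $L=N-J'$.

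To prove the claim, recall that singularities of the outgoing resolvent propagate forward along the Hamilton flow of $p=|\xi|^2_g$ at energy $1$, so a point $q\in\WF'(A_1)$ is removed from $\WF'(u)$ by propagating control \emph{backward} from $q$. By hypothesis 2, $\WF'(A_1)\cap\Gamma_-=\emptyset$, so the backward trajectory through $q$ leaves every compact set and, by \eqref{nudot}, limits onto the incoming radial set $\{\nu=-1,\mu=0\}$; there \eqref{assume} supplies the starting control, while hypothesis 3 ($A_2f$ is microsupported away from this set) ensures no singularity is created. By hypothesis 1 (equivalently, $A_2$ is not incoming-related to $A_1$), this backward trajectory never meets $\WF'(A_2)$, so ordinary semiclassical propagation carries the vanishing of $\WF'(u)$ back to $q$; and hypothesis 4, $\WF'(A_1)\cap\{\nu=+1,\mu=0\}=\emptyset$, removes the only remaining possibility, the outgoing radial set itself. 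Concretely this is implemented with a commutant $B={\rm Op}_h(x^{-2K}b)$, where $b\geq0$ is an escape function microsupported in a thin neighbourhood of the backward flowout of $\WF'(A_1)$, equal to $1$ near $\WF'(A_1)$, decreasing along the forward flow there and, near spatial infinity, with $|H_pb|$ dominating $K|\nu|x$; pairing the equation $(h^2\Delta-(1+i\gamma))u=A_2f$ with $Bu$ and taking imaginary parts gives the standard positive commutator identity, in which $\langle i[h^2\Delta,B]u,u\rangle$ contributes a positive main term (from $-H_pb$) plus the contribution of $[h^2\Delta,x^{-2K}]$, whose principal symbol $\tfrac{h}{i}\,4K\nu\,x^{-2K+1}$ (computed from \eqref{floweq} via \eqref{commut}) has a favourable sign where $\nu\leq0$; the $\gamma$-term is nonnegative and harmless; and the error terms are microsupported either in $\WF'(A_2)$ (absorbed by the a priori bound) or near $\{\nu=-1,\mu=0\}$ (absorbed by the radial-source estimate and hypothesis 3). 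One then localizes and iterates this estimate along the flowout, peeling $\nu$ down from $\sup_{\WF'(A_1)}\nu<1$ toward $-1$ and using temperedness together with \eqref{assume} on the compact interior portion where $\nu>0$, on which the weight $x^{-2K}$ is harmless.

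I expect the main obstacle to be technical rather than conceptual: making the propagation and radial-point estimates quantitative and \emph{uniform in $h$ and in $\gamma>0$} in the scattering geometry near spatial infinity, and controlling the error terms produced by the interaction of the weight $x^{-2K}$ with the escape function near the two radial sets and --- in the trapping case --- near the trapped set, all of which is governed only through hypotheses 1--4. Everything else is a routine adaptation of the positive commutator method of \cite{Mel,VaZw} and of the proof of Lemma~\ref{lem:poscomm1}.
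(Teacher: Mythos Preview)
Your approach is the same as the paper's in spirit---a positive commutator/propagation argument with Lemma~\ref{lem:poscomm1} as input---but the way you structure it has a gap. You factor the argument as (i) the qualitative claim $\WF'(u)\cap\WF'(A_1)=\emptyset$, (ii) the consequence $\|A_1u\|_{x^KL^2}\le Ch^N\|u\|_{x^{-M}L^2}$ for some $M=M(N,K)$, and (iii) a tempered bound $\|u\|_{x^{-M}L^2}\le Ch^{-J'}\|f\|_{x^{-K'}L^2}$. But Lemma~\ref{lem:poscomm1} only supplies (iii) for the \emph{single} value $M=K'+1$, whereas the $M$ produced in (ii) will in general grow with $N$ and $K$; the loop does not close. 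A qualitative wavefront statement about a family $u=u_{h,\gamma,f}$ cannot simply be paired with a fixed-weight tempered norm to manufacture arbitrary $h^L x^K$ gain.

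The paper instead runs a direct induction on the pair $(K,L)$: the base case $(K,L)=(-K'-1,-J)$ \emph{is} Lemma~\ref{lem:poscomm1}, and each step gains $(\tfrac12,\tfrac12)$. The commutant $Q={\rm Op}_h(q)$, with $q\in S^{-\infty,-K-1,0}$, is chosen to vanish on the \emph{entire} forward flowout of $\WF'(A_2)$ (not merely to live in a tube around the backward flowout of $\WF'(A_1)$); this makes $QA_2$ residual so the cross term $\langle v,Q^*QA_2f\rangle$ is trivially estimated, and---crucially---the remainder operators $R,R_1\in\Psi^{-\infty,-2K,0}$ coming from the commutator identity and from sharp G{\aa}rding inherit the same property (microsupport not outgoing-related to $A_2$), so that $\langle v,Rv\rangle$ and $\langle R_1v,v\rangle$ are controlled by the \emph{inductive hypothesis itself} at the previous $(K,L)$, not by a crude bound on $\|v\|_{x^{-M}L^2}$. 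Your second paragraph has the right commutator ingredients, but ``iterate along the flowout, peeling $\nu$ down'' is the wrong iteration here: what is iterated is the spatial weight and the power of $h$, and the mechanism that makes the bootstrap close is precisely that the errors at each step are again of the form covered by the lemma.
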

Both these lemmas are proved in a very similar manner to the argument in \cite[Section 3]{VaZw}. Lemma \ref{lem:poscomm2} is proved  in 
\cite[Lemma 2]{Da} when $A_1,A_2$ have compact support and when $g$ is non-trapping. Because of this, we will only give the main arguments in the proof and we refer to these articles for details. 

\begin{remark} 
By sacrificing one factor of $h$ in \eqref{A3}, we can change the norm on the left hand side to the $x^K H^1(M)$ norm. 
Then Lemma~\ref{lem:poscomm2} and the Sobolev embedding $H^1\subset L^{p'}$ 
give another proof of Proposition \ref{microlocaltrivial}.
\end{remark}

\begin{proof}[Proof of Lemma~\ref{lem:poscomm1}]
We only prove this lemma for $(h^2\Delta-1 + i\gamma)^{-1}$, the other case is similar. Note that it is sufficient to prove a dual statement: that is, to show that the operator 
\begin{equation}
\text{$A^* (h^2\Delta-1 - i\gamma)^{-1}$ maps $x^{K+1} L^2(M)$ to $h^{-J} x^K L^2(M)$ uniformly in $(h, \gamma)$.} \ilabel{dualstatement}\end{equation}
The rest of the proof is devoted to proving \eqref{dualstatement}. 

We first divide $A$ into an elliptic part and a propagating part. 
Choose a smooth function $\psi$ of a real variable, equal to $1$ in the interval $[1 - \delta/2, 1 + \delta/2]$ and supported in $[1 - \delta, 1 + \delta]$ for sufficiently small $\delta>0$. We decompose $A^* = A^*  (\Id - \psi(P)) + A^*\psi(P)$, and call the first term the elliptic part. As a pseudodifferential operator $A^*$ is uniformly bounded on weighted $L^2$ spaces. Also, using Lemma 2.2 of \cite{VaZw} we see that $(1 - \psi(h^2 \Delta))(h^2 \Delta - (1 + i\gamma))^{-1}$ is bounded uniformly on weighted $L^2$ spaces. This shows that the elliptic part satisfies \eqref{dualstatement}. 

We first note that  it is easy to prove  \eqref{dualstatement}  if the wavefront set of $A$ is disjoint from the characteristic variety of $h^2 \Delta - 1$, just using elliptic estimates. Thus we may assume that $A$ is microsupported near the characteristic variety, given by $\{ |\zeta|_g = 1\}$, or $\{ \nu^2 + |\mu|_h^2 = 1\}$ near the boundary.  The result is also trivial if $A^*$ is microsupported in $\{ x \geq x_0 > 0 \}$. 
 Due  to the assumption that $A^*$ is microsupported away from the outgoing radial set $\{ \nu = 1, \mu = 0 \}$, we see that  it suffices to prove the Lemma under the following two additional hypotheses, which we record for later use:
 \begin{equation} \begin{aligned}
&\bullet \text{$A^*$ is microsupported in $\{ \nu \leq 1 - \delta, x \leq x_0 \}$ for some $\delta > 0$, $x_0 > 0$;} \\
&\bullet A^* (1 - \phi(h^2 \Delta)) = 0. 
\end{aligned}\ilabel{extra}\end{equation}  
where $\phi$ is a function similar to  $\psi$ above, equal to $1$ near $1$  but supported on $[1 - 2\delta, 1+ 2\delta]$. Notice that $\phi(h^2\Delta)\in \Psi^{-\infty,0,0}(M)$
 and has principal symbol $\phi(p)$, see \cite{VaZw,DiSj}.

Under this assumption for the remainder of the proof, we begin by proving \eqref{dualstatement} for $K = -\epsilon$, $\epsilon > 0$ small. So we take $f \in x^{1 - \epsilon} L^2(M)$.  Let $\rho,\til{\rho}\in C_0^\infty(\rr)$ be non increasing, equal to 
$1$ near $0$ and $0$ in $[x_0,\infty)$ for some small $x_0>0$, and with $\rho\til{\rho}=\rho$. 
Let $\chi\in C^\infty(\rr)$ be 
non-increasing, with $\chi'\leq-c_1\chi$, positive in $]-\infty,\nu_0]$ and $0$ in $[\nu_1,\infty)$ 
for some $0<\nu_0<\nu_1<1$ both close to $1$ and some $c_1>0$ large. Let $\til{\phi}\in C_0^\infty(\rr)$
equal $1$ near $1$ and $\phi\til{\phi}=\til{\phi}$. 
It suffices to prove the Lemma for $A^*={\rm Op}_h(a)$ with $a=\til{\rho}(x)\chi(\nu)\til{\phi}(p)$.

Following Section 4 \cite{VaZw},  there exists a real valued symbol such that
\begin{equation}
\text{
$q \in S^{-\infty,-1/2+\eps,0}(M)$ such that $H_p(q^2) = - b \phi(p)^2$ with $b\rho^2(x) \geq c_0 x^{2\epsilon} a^2\rho^2(x)$}
\ilabel{qsymbol}\end{equation}
and such that there exists $C_0>0$ such that
\begin{equation}
C_0 a^2\rho^2(x) \geq x^{1 - 2\epsilon} q^2\rho^2(x).
\ilabel{aqrelation}\end{equation}
It suffices to take $q:=x^{-\demi+\eps}\til{\rho}(x)\til{\phi}(p)\chi(\nu)$ and use the identity for $\alpha\in\rr$
\begin{equation}\ilabel{Hpq}
x^{-\alpha-1}H_p(x^{\alpha}\til{\rho}(x)\chi(\nu))=-2\nu(\alpha \til{\rho}(x)+{\til{\rho}}\,'(x))\chi(\nu)+2|\mu|_{h_0}^2\chi'(\nu)\til{\rho}(x)+\mc{O}(x).\end{equation}

 Let $Q={\rm Op}_h(q)$, and  $B={\rm Op}_h(b)$. It follows that, with $P:=h^2\Delta$, we have
\begin{equation}
i[Q^*Q, P] = h \phi(P) B \phi(P) + h^2 R,
\ilabel{comm-identity}\end{equation}
with $R\in \Psi^{-\infty,1+2\eps,0}(M)$. Let $f\in x^{1-\eps}L^2(M)$ and $u:=(P-(1+i\gamma))^{-1}f$. 
We then follow the argument of \cite[Section 3]{VaZw} to deduce 
$$
\langle u, i[Q^* Q, P] u \rangle = -2 \Im \langle u, Q^* Q (P - (1 + i\gamma)) u \rangle - 2 \gamma \| Qu \|^2. 
$$
It follows from \eqref{comm-identity} that we have 
\begin{equation}
h \langle u, \phi(P) B \phi(P) u \rangle \leq 2 \big| \langle u, Q^* Q (P - (1 + i\gamma)) u \rangle \big|  + h^2 \big| \langle  u,R  u \rangle \big|. 
\ilabel{est1}\end{equation}
We estimate 
\begin{equation}
\big| \langle u, Q^* Q (P - (1 + i\gamma)) u \rangle \big|  \leq  \frac{h}{2C_0} \| x^{1/2} Qu \|_{L^2}^2 + \frac{2C_0}{h} \big\| x^{-1/2} Q (P - (1 + i\gamma)) u \big\|_{L^2}^2. 
\ilabel{est1.5}\end{equation}
Using  that $Q\in \Psi^{-\infty,-\demi+\eps,0}(M)$, we find that (for small $\eps$)
\begin{equation}
\big\| x^{-1/2} Q (P - (1 + i\gamma)) u \big\|_{L^2}^2  \leq  C \| f \|^2_{x^{1- \epsilon}L^2}. 
\ilabel{est2}\end{equation}

Next we use \eqref{qsymbol} and  the sharp Garding inequality \cite[Lemma 2.1]{VaZw}Êto deduce that as operators, 
\begin{equation*}
 \rho(x)\phi(P) B  \phi(P)\rho(x) \geq   \phi(P) A \rho^2(x)x^{2\eps}A^*  \phi(P)  + h R_1,
\end{equation*}
for some pseudo $R_1\in \Psi^{-\infty,1+2\eps,0}(M)$. Since $A^* = A^*  \phi(P)$, we have
 \begin{equation}
\| \rho(x)x^\epsilon A^*u \|^2 \leq  \langle u,  \rho(x)\phi(P) B  \phi(P) \rho(x)u \rangle + h \big| \langle R_1u, u \rangle \big|.
\ilabel{est3}\end{equation}
We also use \eqref{aqrelation} and sharp Garding to deduce 
\begin{equation}
Q^* \rho^2(x)x Q  \leq C_0 A \rho^2(x)x^{2\eps}A^*  + h R_2 ,
\ilabel{sG2}\end{equation}
with $R_2\in \Psi^{-\infty,1+2\eps,0}(M)$, implying that 
\begin{equation}
\| \rho(x)x^{1/2} Qu \|_{L^2}^2 \leq C_0 \| \rho(x)x^\epsilon A^*u \|_{L^2}^2 + h \big| \langle R_2 u, u \rangle \big|.
\ilabel{QArelation}\end{equation}
By \eqref{assume} and the compact support of $(1-\rho(x))$, we also have 
\begin{equation}\ilabel{useassume}
 \begin{gathered}
\|(1-\rho(x))x^{1/2} Qu \|_{L^2} \leq Ch^{-J}\| f\|_{x^{1/2+\eps}L^2},\\ 
|\langle (1-\rho(x))u, \phi(P) B \phi(P) u \rangle|+|\langle u, \phi(P) B \phi(P)(1-\rho(x)) u \rangle|\leq Ch^{-2J}\| f\|^2_{x^{1/2+\eps}L^2}.
\end{gathered}\end{equation}
We combine this with \eqref{est1}, \eqref{est1.5}, \eqref{est2} and \eqref{QArelation} to deduce 
\begin{equation*}
 \langle u,  \phi(P) B  \phi(P) u \rangle \leq C h^{-2J} \| f \|^2_{x^{1- \epsilon}L^2(M)} + \frac{1}{2}\| \rho(x)x^\epsilon A^*u \|^2 + h \big| \langle R_2 u, u \rangle \big|  + h\big| \langle u, R u \rangle \big|. 
\end{equation*}
We insert this in \eqref{est3} and use again \eqref{useassume}Êto find that 
\begin{equation}\begin{gathered}
\| \rho(x)x^\epsilon A^*u \|_{L^2}^2 \leq  C h^{-2J} \| f \|^2_{x^{1- \epsilon}L^2(M)}  
  + Ch (\big| \langle R_1 u, u \rangle \big|+\big| \langle R_2 u, u \rangle \big|  + \big| \langle u, R u \rangle \big|) .
\end{gathered}\ilabel{est4}\end{equation}
The  terms involving $R$ and $R_j$ can be estimated by $C h^{-2J} \| f \|_{x^{1/2 + \epsilon} L^2}$, using \eqref{VZest} again and the fact that 
 $R\in \Psi^{-\infty,1+2\eps,0}(M)$ . Thus this proves the claim since obviously $||(1-\rho)x^{\eps}A^*u||_{L^2}$ is 
estimated by $Ch^{-J}||f||_{x^{1/2+\eps}L^2}$ by \eqref{useassume} and the fact that $(1-\rho(x))$ has compact support. 
 That is, we have shown that 
 \begin{equation}
\| x^\epsilon A^*u \|^2 \leq  C h^{-2J} \| f \|^2_{x^{1- \epsilon}L^2(M)}  ,
\ilabel{est5}\end{equation}
which proves \eqref{dualstatement}  for $K = -\epsilon$, $\epsilon > 0$ small. 

Now to prove for larger values of $K$, we proceed by induction. Given \eqref{dualstatement} for a particular value of $K$, we prove true for $K + l$, where $0 < l \leq 1/2$. To do this, we now let $f \in x^{K+1 + l} L^2(M)$ and as before we find a symbol 
\begin{equation}
\text{
$q \in S^{-\infty,-K-1/2-l,0}(M)$ such that $H_p(q^2) = - b \psi(p)^2$ with $b\rho^2(x) \geq c_0 x^{-2K-2l} a^2\rho^2(x)$.}
\ilabel{qsymbol2}\end{equation}
and for which there is a constant $C_0>0$ with  
\begin{equation}
C_0 a^2\rho^2(x) \geq x^{-2K-1-2l} q^2\rho^2(x).
\ilabel{aqrelation2}\end{equation}
Then we have \eqref{comm-identity} where now $R$ has order $-2K+1-2l \geq -2K$. We follow the line of argument above until \eqref{est2} which is replaced by 
\begin{equation}
\big\| x^{-1/2} Q (P - (1 + i\gamma)) u \big\|^2  \leq  C h^{-1} \| f \|^2_{x^{K+1 + l}L^2(M)} 
\ilabel{est22}\end{equation} 
since now $Q$ has order $-K-1/2 - l$. 

We replace the sharp Garding inequality by 
\begin{equation}
 \rho(x)\phi(P) B  \phi(P)\rho(x) \geq   \phi(P) A \rho^2(x)x^{-2K-2l}A^*  \phi(P)  + h R_1,
 \ilabel{sG3}\end{equation}
and \eqref{sG2} by 
\begin{equation}
Q^* \rho^2(x)x Q  \leq C_0 A \rho^2(x)x^{-2K-2l}A^*  + h R_2 ,
\ilabel{sG4}\end{equation}
where the $R_j\in \Psi^{-\infty,-2K-2l+1}(M)$. Since $-2K+1-2l \geq -2K$, using the inductive assumption 
$u \in h^{-J} x^{-K} L^2$ uniformly shows that the $R$ and $R_j$ terms can be controlled as above. 
 Then the rest of the argument proceeds as above to yield 
 \begin{equation*}
\| x^{-K-l} A^*u \|^2 \leq  C h^{-2J} \| f \|^2_{x^{K+1+l}L^2(M)}  ,
\end{equation*}
which proves \eqref{dualstatement}  for $K+l$. Thus, starting from $K = -\epsilon$, a finite number of iterations gives \eqref{dualstatement} for any $K > 0$. 
\end{proof}

\begin{proof}[Proof of Lemma~\ref{lem:poscomm2}] 
This lemma is proved in a very similar way. Again we easily deal with the case that the microsupport of $A_1$ is disjoint from the characteristic variety, and reduce to the case where $A_1$ satisfies conditions \eqref{extra}. 
 We proceed by induction. Lemma~\ref{lem:poscomm1} gives a starting point for the induction: we can take $L = -J$ and $K= -K'-1$. Now assume that the have the result for some $(K', L)$. We wish to show that the conclusion is valid for $(K+1/2, L+1/2)$. 
Take $f \in x^{-K'} L^2(M)$ and it suffices to assume that $A_i={\rm Op}_h(a_i)$ with $a_1,a_2\in S^{-\infty,0,0}(M)$.

Assume, following \cite{VaZw}, that we have a symbol
\begin{multline}
\text{
$q \in S^{-\infty,-K-1,0}(M)$ such that $H_p(q^2) = - b \phi(p)^2$ with $b \geq c_0 x^{-2K-1} a_1^2$} \\ \text{and $q = 0$ on all points outgoing-related to supp $a_2$.}
\ilabel{qsymbol3}\end{multline}
where $\phi$ is like in the previous Lemma.  The construction of the symbol is explained later.
As a consequence of \eqref{qsymbol3}, we can find quantizations $Q$, $B$ of $q, b$ respectively, such that 
\[
i[Q^*Q, P] = h \phi(P) B \phi(P) + h^2 R,
\]
with $R\in\Psi^{-\infty,-2K,0}(M)$ and such that 
\begin{equation}
\text{the microsupports of $Q$, $B$ and $R$ are not outgoing-related to that of $A_2$.}
\ilabel{Rcond}\end{equation}
 
 We write $v = (P - (1 + i\gamma))^{-1} A_2 f$ and use the identity 
$$
\langle v, i[Q^* Q, P] v \rangle = -2 \Im \langle v, Q^* Q (P - (1 + i\gamma)) v \rangle - 2 \gamma \| Qv \|^2 .
$$
We then apply very similar arguments to those in \cite{VaZw} and the previous lemma to deduce that 
\begin{equation}
h \langle v, \phi(P) B \phi(P) v \rangle \leq 2 \big| \langle v, Q^* Q (P - (1 + i\gamma)) v \rangle \big|  + h^2 \big| \langle v, R v \rangle \big|. 
\ilabel{est11}\end{equation}
The first term is estimated as follows: we write $(P - (1 + i\gamma)) v = A_2 f$ and estimate for $N$ large
$$
\big| \langle v, Q^* Q (P - (1 + i\gamma)) v \rangle \big| \leq \| h^N x^{N} Qv \| \| h^{-N} x^{-N} Q A_2 f \| .
$$
Since $\WF'(Q)\cap\rm{WF}(A_2)=\emptyset$,  $QA_2v$ is $\mc{O}(h^\infty)$ in any weighted space so the second norm is bounded by $C \| f \|_{x^{-K'} L^2}$. The first term is controlled by Lemma~\ref{lem:poscomm1}. Thus we find that 
\begin{equation}
\big| \langle v, Q^* Q (P - (1 + i\gamma)) v \rangle \big| \leq Ch^{2L+1} \| f \|_{x^{-K'} L^2}^2. 
\ilabel{est6}\end{equation}

Next we use \eqref{qsymbol3}, the sharp Garding inequality, and \eqref{extra} to deduce that as operators, 
\begin{equation*}
\phi(P) B \phi(P) \geq A_1^* x^{-2K-1}A_1 + h R_1 
\end{equation*}
for some $R_1\in \Psi^{-\infty,-2k,0}(M)$.  Therefore 
 \[
\| x^{-K-1/2} A_1 v \|^2 \leq  \langle v, \phi(P) B \phi(P) v \rangle + h \big| \langle R_1v, v \rangle \big|.
\]
Combining this with \eqref{est11} and \eqref{est6}, we find that 
\[\| x^{-(K+1/2)} A_1 v \|^2 \leq  Ch^{2L+1} \| f \|_{x^{-K'} L^2}^2 + h \big| \langle v, R v \rangle \big| + h \big| \langle R_1v, v \rangle \big| . 
\]
Let us explain briefly how to deal with the $R$ term (the $R_1$ term is similar): from \eqref{Rcond}
we can rewrite $\cjg v,Rv\cjd=\cjg R_2v,R_3v\cjd$ for some $R_2,R_3\in \Psi^{-\infty,-k,0}(M)$ with microsupport
not outgoing related to $A_2$, and then using the induction assumption, we see that this term is bounded 
 by $Ch^{2L} \| f \|_{x^{-K'} L^2}^2$.
 We conclude that 
\[
\| x^{-(K+1/2)} A_1 v \|_{L^2}^2 \leq  Ch^{2L+1} \| f \|_{x^{-K'} L^2}^2 . 
\]
This completes the inductive step, and thus proves the Lemma.\\ 

Now we discuss the construction of the symbol. It suffices to do this assuming that $A_1$ is microsupported in an arbitrarily small neighbourhood of a point $\xi_1 \in {^\SC T^*}\bbar{M}$. It also suffices to assume that the microsupport of $A_2$ is contained in an arbitrarily small neighbourhood of the characteristic variety $\{ \nu^2 + |\mu|^2 = 1\} $, otherwise we are in the easy elliptic case. 
Using the hypothesis that $\WF'(A_2)$ is disjoint from the incoming radial set, this allows us to assume without loss of generality that 
\begin{equation}
\nu \geq \nu_0 > -1 \text{ on } \WF'(A_2). 
\ilabel{assA2}\end{equation}

First assume that $\xi_1$ is in the incoming radial set $\{ x = 0, \mu = 0, \nu = -1 \}$. Then using \eqref{assA2} we can use a symbol $q$ of the form 
\begin{equation}
x^{-K-1}\til{\rho}(x)\til{\phi}(p)\chi(\nu), \quad \chi(\nu) \text{ supported in } \{ \nu \leq \nu_1 \} \text{ where } -1 < \nu_1 < \min(0, \nu_0) 
\ilabel{qform}\end{equation}
as in the proof of Lemma~\ref{lem:poscomm1}. 

On the other hand, suppose that $\xi_1$ is not in the incoming radial set. By hypothesis is it also not in the outgoing radial set, so the Hamilton vector field $x^{-1} H_p$ is nonzero at $\xi_1$. We may therefore choose coordinates in ${^\SC T^*}\bbar{M}$, valid in a small neighbourhood of the integral curve of $x^{-1} H_p$ through $\xi_1$, in the form $(\Xi, t)$, $\Xi \in \RR^{2n-1}$, $t \in \RR$  such that $\xi_1 = (0,0)$ and $x^{-1} H_p = \partial_t$ in these coordinates, i.e. $\Xi$ is constant along integral curves. We can then find a function of the form $q'_1 = \chi(t) \phi(\Xi)$, where $\chi(t)$ is nonnegative, strictly positive on $(-\infty, t_0)$, $t_0 > 0$ and zero on $[t_0, \infty)$ and $\phi \in C_c^\infty(\RR^{2n-1})$ with $\phi(0) = 1$. Then $x^{-1} H_p(q'_1)$ is nonpositive, and strictly negative at $\xi_1$.  Since $\xi_1$ is not outgoing-related to the microsupport of $A_2$, by choosing $t_0$ small and  the support of $\phi$ sufficiently close to $0$, we can ensure that the support of $q'_1$ is disjoint from $\WF'(A_2)$. Also, since $\xi_1$ is not backward-trapped, if the support of $\phi$ is sufficiently close to $0$ then all integral curves in $\supp \, \phi$ tend to the incoming radial set as $t \to -\infty$. So  we modify $q'_1$ to $q_1 = q'_1 \rho(t _1 - t)$ where $\rho(s)$ is zero for $s \leq 0$, $1$ for $s \geq 1$, and $t_1$ sufficiently negative. Notice that $q_1$ is smooth on ${^\SC T^*}\bbar{M}$, and $x^{-1} H_p q_1$ is strictly negative in a neighbourhood of $\xi_1$ and nonnegative everywhere except on the support of $\rho'(t_1 - t)$ which, for sufficiently negative $t_1$, will be arbitrarily close to the incoming radial set.   Finally we let $q = q_1 + C q_2$ where $q_2$ is of the form \eqref{qform} and $C$ is sufficiently large and this satisfies all conditions.  
\end{proof}



\begin{thebibliography}{99}

\bibitem{BSSY} J. Bourgain, P. Shao, C.D. Sogge, X. Yao, \emph{On $L^p$-resolvent estimates and the density of eigenvalues for compact Riemannian manifolds}, arXiv:1204.3927.

\bibitem{ChLiYa} S. Y Cheng, P. Li and S-T. Yau, \emph{On the upper estimate of the heat kernel of a complete Riemannian manifold}, Amer. J. Math \textbf{103} (1981), 1021--1063. 

\bibitem{Da} K. Datchev, \emph{Local smoothing for scattering manifolds with hyperbolic trapped sets.} 
Communications in Mathematical Physics, \textbf{286} (2009), No. 3, pp. 837--850.

\bibitem{DiSj} M. Dimassi, J. Sj\"ostrand, \emph{Spectral asymptotics in the semi-classical limit}, Lecture Note Series 268, London Math Society.

\bibitem{DKS} D. Dos Santos Ferreira, C.E. Kenig, M. Salo, \emph{On $L^p$ resolvent estimates for Laplace-Beltrami operators on compact manifolds}, to appear in Forum Math.

\bibitem{GG} J. Garc\'ia-Cuerva, A. E. Gatto, \emph{Boundedness properties of fractional integral operators associated to non--doubling measures}, Studia Mathematica \textbf{162} (2004), 245--261. 

\bibitem{GH} C. Guillarmou, A. Hassell, \emph{Resolvent at low energy and Riesz transform for Schršdinger operators on asymptotically conic manifolds.} I. Math. Ann. \textbf{341} (2008), no. 4, 859Ð896.

\bibitem{GHS1} C. Guillarmou, A. Hassell, A. Sikora, 
\emph{Resolvent at low energy III: the spectral measure}, Transactions AMS, to appear; arXiv:1009.3084.

\bibitem{GHS2} C. Guillarmou, A. Hassell, A. Sikora, 
\emph{Restriction and spectral multiplier theorems on asymptotically conic manifolds}, arXiv:1012.3780. 


\bibitem{HV} A.~Hassell and A.~Vasy, \emph{The resolvent for Laplace-type operators on asymptotically conic spaces.},  Ann. Inst. Fourier (Grenoble)  \textbf{51}(5) (2001), 1299-1346.


\bibitem{HW} A. Hassell and J. Wunsch, \emph{The semiclassical resolvent and the propagator for non-trapping scattering metrics}, Adv. Math. \textbf{217} (2008), no. 2, 586--682. 

\bibitem{KRS} C. E. Kenig, A. Ruiz and C. D. Sogge, \emph{Uniform Sobolev inequalities and unique continuation for 
second order constant coefficient differential operators}, Duke Math. J. \textbf{55} (1987), 329--347. 

\bibitem{Melb}  R.B. Melrose, \emph{The Atiyah-Patodi-Singer index theorem} 
(AK Peters, Wellesley, 1993).

\bibitem{Mel} R.B. Melrose, \emph{Spectral and scattering theory for the Laplacian on asymptotically Euclidean spaces}. 
Spectral and scattering theory (Sanda, 1992), 85Ð130, Lecture Notes in Pure and Appl. Math., 161, Dekker, New York, 1994.

\bibitem{MZ} R. B. Melrose and M. Zworski, \emph{Scattering metrics and geodesic flow
  at infinity}, Invent. Math. \textbf{124} (1996), no.~1-3, 389--436.

\bibitem{RoTa}    I. Rodnianski, T. Tao, \emph{Effective limiting absorption principles, and applications},
 arXiv:1105.0873. 

\bibitem{Sh} Z. Shen, \emph{On absolute continuity of the periodic Schr\"odinger operators}, 
Internat. Math. Res. Notices \textbf{1} (2001), 1--31.

\bibitem{St} E. Stein, \emph{Harmonic Analysis. Real variable methods, orthogonality and oscillatory integrals.} Princeton University Press, 1993.

\bibitem{Var} N. Varopoulos, \emph{Hardy-Littlewood Sobolev for semigroups}, J. Funct. Anal. \textbf{63} (2) (1985) 240--260

\bibitem{VaZw} A. Vasy, M. Zworski, \emph{Semi-classical estimates in asymptotically Euclidean scattering}
Commun. Math. Phys. \textbf{212} (2000) 205--217. 

\bibitem{WuZw} J. Wunsch, M. Zworski, \emph{Distribution of resonances for asymptotically Euclidean manifolds.} 
J. Differential Geom. \textbf{55} (2000), no. 1, 43Ð82.

\bibitem{Zw} M.~Zworski, \emph{Semiclassical analysis}
to appear in Graduate Studies in Mathematics, AMS, 2012.


\end{thebibliography}
\end{document}